\numberwithin{equation}{section}
\numberwithin{figure}{section}
\newcommand{\titletext}{Optimization-based frequentist confidence intervals for functionals in constrained inverse problems: Resolving the \rustburrus conjecture}
\title{\titletext}
\author{
    Pau Batlle\footremember{caltechcms}{Department of Computing and Mathematical Sciences, California Institute of Technology, CA 91125, USA.} 
    \\ {\small \href{mailto:pbatllef@caltech.edu}{pbatllef@caltech.edu}~} 
    \and Pratik Patil\footremember{berkeleystats}{Department of Statistics, University of California, Berkeley, CA 94720, USA.} 
    \\ {\small \href{mailto:pratikpatil@berkeley.edu}{pratikpatil@berkeley.edu}~} 
    \and Michael Stanley\footremember{cmustats}{Department of Statistics and Data Science, Carnegie Mellon University, Pittsburgh, PA 15213, USA.} 
    \\ {\small \href{mailto:mcstanle@andrew.cmu.edu}{mcstanle@andrew.cmu.edu}~} 
    \and Houman Owhadi\footrecall{caltechcms} 
    \\ {\small \href{mailto:owhadi@caltech.edu}{owhadi@caltech.edu}~} 
    \and Mikael Kuusela\footrecall{cmustats} 
    \\ {\small \href{mailto:mkuusela@andrew.cmu.edu}{mkuusela@andrew.cmu.edu}~}
}
\date{\today}
\begin{document}

\maketitle

\begin{abstract}
We present an optimization-based framework to construct confidence intervals for functionals in constrained inverse problems, ensuring valid one-at-a-time frequentist coverage guarantees.
Our approach builds upon the now-called strict bounds intervals, originally pioneered by \cite{burrus1965utilization,rust_burrus}, which offer ways to directly incorporate any side information about the parameters during inference without introducing external biases. 
This family of methods allows for uncertainty quantification in ill-posed inverse problems without needing to select a regularizing prior. 
By tying optimization-based intervals to an inversion of a constrained likelihood ratio test, we translate interval coverage guarantees into type I error control and characterize the resulting interval via solutions to optimization problems. 
Along the way, we refute the \rustburrus conjecture, which posited that, for possibly rank-deficient linear Gaussian models with positivity constraints, a correction based on the quantile of the chi-squared distribution with one degree of freedom suffices to shorten intervals while maintaining frequentist coverage guarantees. 
Our framework provides a novel approach to analyzing the conjecture, and we construct a counterexample employing a stochastic dominance argument, which we also use to disprove a general form of the conjecture. 
We illustrate our framework with several numerical examples and provide directions for extensions beyond the Rust--Burrus method for nonlinear, non-Gaussian settings with general constraints.
\end{abstract}

\section{Introduction}\label{sec:intro}

Advances in data collection and computational power in recent years have led to an increase in the prevalence of high-dimensional, ill-posed inverse problems, especially within the physical sciences.
These challenges are particularly evident in domains such as remote sensing and data assimilation, where uncertainty quantification (UQ) in inverse problems is of paramount importance.
Many of these inverse problems also come with inherent physical constraints on their parameters.
This paper focuses on constrained inverse problems for which the noise model is known, and the forward model, defined on a finite-dimensional parameter space, can be computationally evaluated.
Our primary objective is to construct a confidence interval for a functional of the forward model parameters.

Formally, we consider statistical models of the form $\by \sim P_{\bx^*}$, where $\by \in \mathbb{R}^m$ is sampled according to a parametric probability distribution. 
Here $\bx^* \in \mathbb{R}^p$ is a fixed unknown parameter, which we know a priori lies within the set $\mathcal X$; see \Cref{fig:diagram-intro} for an illustration.
Our goal is to construct confidence intervals for a known one-dimensional functional $\varphi(\bx^*) \in \RR$.
Ideally, we want the length of these intervals to be as small as possible, while still maintaining a nonasymptotic frequentist coverage guarantee.
In other words, given a prescribed coverage level $1 - \alpha$ for some $\alpha \in (0, 1)$, we want to construct functions of the data $I^-(\by)$ and $I^+(\by)$ such that the following coverage guarantee holds in finite sample\footnote{This form of ``simple'' interval is only for expositional simplicity. 
One can consider more general forms of confidence sets $\cI(\by)$ beyond simple intervals, which we will do when describing the general framework in \Cref{sec:interval_methodology}.}:
\begin{equation} \label{u}
    \inf_{\bx\in\mathcal X}\mathbb{P}_{\by \sim P_{\bx}}\big(\varphi(\bx) \in [I^-(\by), I^+(\by)] \big) \ge 1-\alpha.
\end{equation}
While the requirement \eqref{u} requires that we maintain at least $1 - \alpha$ coverage, we also want it to be approximately accurate by minimizing the slack in the inequality.
Ensuring such \emph{proper calibration}, namely, confidence intervals that do not \textit{undercover} (fail to meet the $1-\alpha$ guarantee for some $\bx \in \mathcal X$) or \textit{overcover} (are too large and therefore exceed the required coverage) is paramount in practical applications.
This is especially true in contexts that require stringent safety and certification standards. 
Intervals that undercover yield unreliable inferences that may expose the system to unforeseen risks.
Conversely, intervals that overcover might lead to excessive economic costs by needing to guard against scenarios that are unlikely to occur.

\begin{figure}[!t]
    \centering
    \includegraphics[width = 0.75\columnwidth]{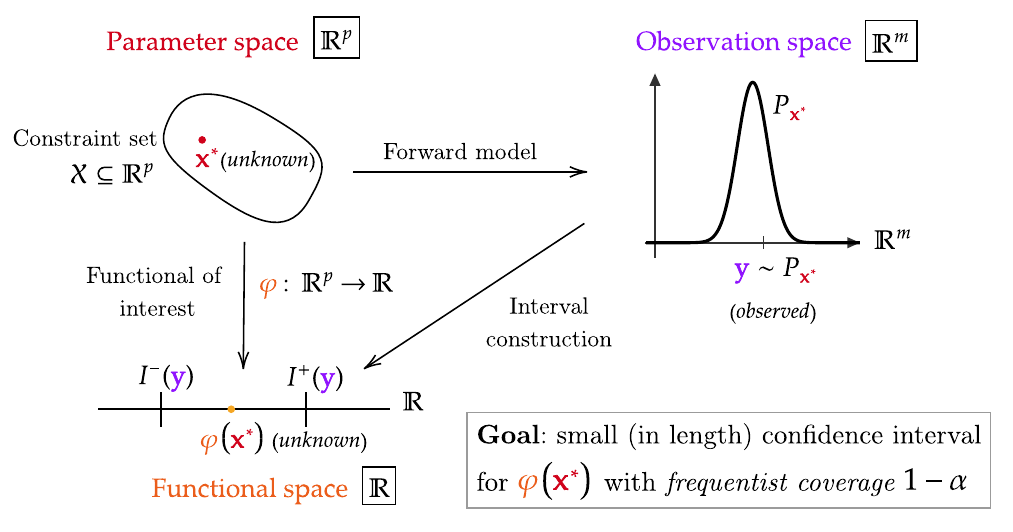}
    \caption{
    Illustration of the problem setup. We seek to construct confidence intervals $[I^-(\by), I^+(\by)] \subseteq \RR$ for $\varphi(\bx^*) \in \RR$ from an observation $\by \in \RR^m$ sampled from $P_{\bx^*}$ that satisfies a frequentist coverage guarantee in finite sample while being as small (in length) as possible.
    }
    \label{fig:diagram-intro}
\end{figure}

In many applied contexts, Bayesian methods constitute a primary set of techniques for uncertainty quantification. 
These methods leverage a prior for regularization, derived either from the intrinsic details of the problem or introduced externally. 
A key advantage of this regularization approach is the natural UQ that emerges from the Bayesian statistical framework. 
Specifically, the combination of a predefined prior and data likelihood results in a posterior distribution via Bayes' theorem. 
This distribution can subsequently be used to derive the intended posterior UQ. 
However, there is a caveat:
Bayesian methods can offer \emph{marginal coverage} (probability over $\bm{x}$ and $\bm{y}$) if the prior is correctly specified.
They do not necessarily provide \emph{conditional coverage} (probability over $\bm{y}$ given $\bm{x}$).
The former notion of coverage is weaker (and, in particular, the latter implies the former, but the converse may not be true), as it replaces the infimum in the coverage requirement \eqref{u} with a probability distribution over $\bx$.
Generally, Bayesian methods may not align with the analyst's expectations due to inherent bias \cite{kuusela_phd_thesis, patil}.
While, in theory, priors present an effective mechanism to incorporate scientific knowledge into UQ, they can inadvertently introduce extraneous information \cite{stark_constraints_priors} and a lack of robustness in the resulting estimates \cite{owhadi2015brittlenessb,owhadi2015brittlenessa, owhadi2017qualitative}.

On the other hand, we could consider a basic worst-case approach that is rooted in the simple observation that: 
\begin{equation}\label{worst_case}
    \varphi(\bx^*) \in \bigg[ \inf_{\bx \in \mathcal X} \varphi(\bx), \; \sup_{\bx \in \mathcal X} \varphi(\bx) \bigg].
\end{equation}
Of course, this method is inherently conservative given the absence of assumptions and any specific knowledge regarding data generation.
More importantly, the method does not use observations $y$ in any way to calibrate the confidence set.
This means that the sets cannot be fine-tuned to approximately achieve the desired $1 - \alpha$ coverage level.
Nevertheless, they illustrate the essential idea of constructing a confidence interval based on the outcomes of two boundary optimization problems, an approach that the more sophisticated methods that we will study in this paper build on.
We shall henceforth refer to such intervals with the notation: 
\begin{equation*}
\begin{aligned}
\inf_{\bx}/\sup_{\bx} \quad & \varphi(\bx) \\
\st \quad & \bx \in \mathcal X 
\end{aligned} := \bigg [ \inf_{\bx \in \mathcal X} \varphi(\bx), \; \sup_{\bx \in \mathcal X} \varphi(\bx) \bigg ].
\end{equation*}

An example of such a more sophisticated method is the so-called ``simultaneous'' approach \cite{stark_strict_bounds,stark1994simultaneous}, which provides intervals with at least $1-\alpha$ frequentist coverage for the functional of interest $\varphi(\bx^*)$ from confidence sets for the parameter $\bx^*$. 
The approach can be summarized in three steps (see \Cref{fig:diagram-intro2} for an illustration): 
\begin{enumerate}[leftmargin=15mm]
    \setlength\itemsep{0em}
    \item[Step 1.] Construct a set $\mathcal{C}(\by) \in \RR^p$ that is a $1-\alpha$ confidence set for $\bx^*$.
    \item[Step 2.] Intersect this set $\mathcal{C}(\by)$ with the constraint set $\mathcal X$.
    \item[Step 3.] Project this intersection through the functional of interest $\varphi$.
\end{enumerate}
The term ``simultaneous'' refers to Steps 1 and 2 being independent of the quantity of interest $\varphi$, so the resulting set from Step 2 can be simultaneously projected to different quantities of interest. 
Under mild assumptions, the resulting intervals can be equivalently written as
\begin{equation}\label{simultaneous}
    \cI_{\SSB}(\by)
    :=
    \bigg [\inf_{\bx \in \mathcal X\cap \mathcal{C}(\by)} \varphi(\bx), \sup_{\bx \in \mathcal X \cap \mathcal{C}(\by)} \varphi(\bx) \bigg ] = \begin{aligned}
\inf_{\bx}/\sup_{\bx} \quad & \varphi(\bx) \\
\st \quad & \bx \in \mathcal X \cap \mathcal{C}(\by).
\end{aligned}
\end{equation}
This illustrates how the simultaneous approach is a refinement of the basic worst-case method \eqref{worst_case}: 
the observation of the data $\by$ shrinks the \say{pre-data set} $\mathcal X$ into a smaller \say{post-data set} $\mathcal X \cap \mathcal{C}(\by)$, which is then projected through $\varphi$ in a worst-case manner. 
Given that this simultaneous framework is broadly encapsulated in \cite{stark_strict_bounds} as ``strict bounds,'' we label these intervals as ``simultaneous strict bounds'' or SSB intervals, for short.

Unlike methods that rely on explicit regularization through a prior, the techniques outlined above leverage only the physical constraints and the functional of interest to address the underlying ill-posedness of the inverse problem. 
This approach allows for uncertainty quantification without the need to assume a prior distribution, circumventing potential biases and miscalibrated coverage issues previously mentioned.

Although the interval \eqref{simultaneous} has guaranteed coverage for $\varphi(\bx^*)$ inherited from the coverage of $\mathcal{C}(\by)$, this method generally suffers from overcoverage, especially when the dimension of $\mathcal X$ is large \cite{patil,stanley_unfolding,kuusela2017shape}. 
This happens due to two main factors: (i) its generality cannot account for the specific structure of $P$, $\varphi$, and $\mathcal X$; and (ii) while the set $\mathcal C(\by)$ being a $1-\alpha$ confidence set is a sufficient condition, it is not necessary for \eqref{simultaneous} to ensure accurate coverage, which implies that smaller sets might also produce valid confidence intervals.
Consequently, an important research direction has been constructing confidence intervals that are shorter than the simultaneous approach, but still maintain nominal coverage for a given $\varphi$. Sometimes, this is achieved by assuming that $P$, $\varphi$, and $\mathcal X$ come from a particular class \cite{stark1994simultaneous, rust_burrus, tenorio2007confidence, patil, stanley_unfolding}.
In the sequel, we discuss one such special class.

\begin{figure}[!t]
    \centering
    \includegraphics[width = 0.8\columnwidth]{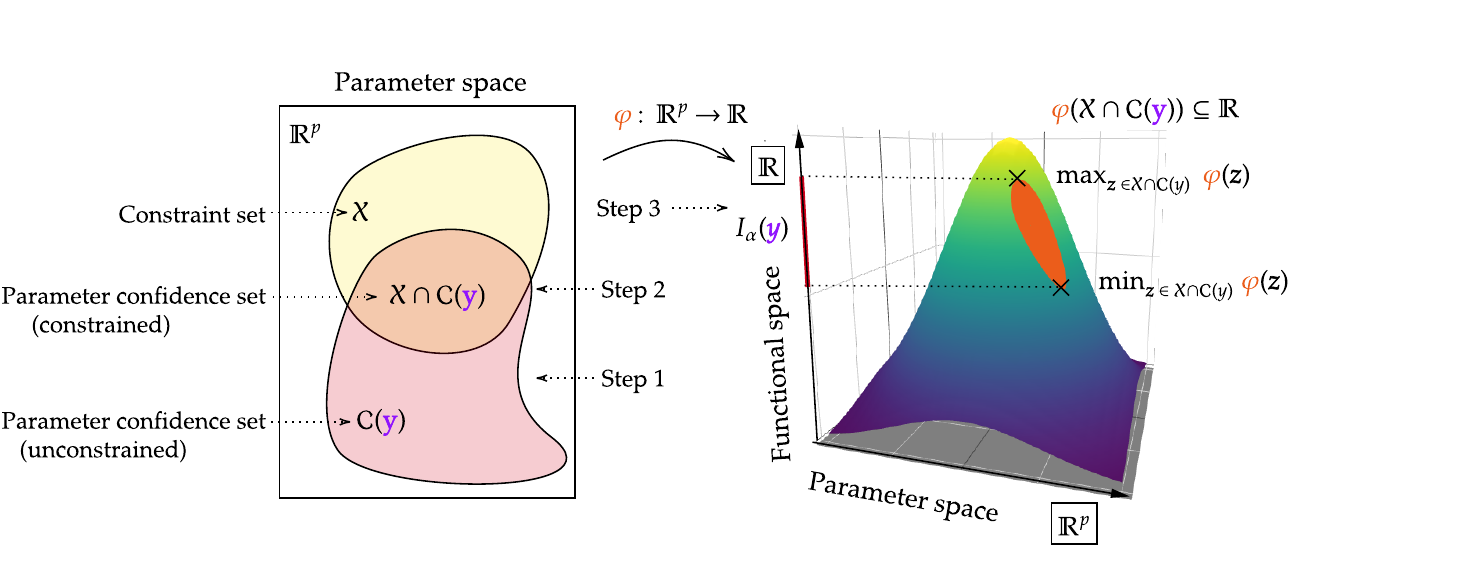}
    \caption{
    Illustration of the simultaneous approach for confidence interval building, which works generically for any $\mathcal X$, $\varphi$ and $P$. 
    The intersection of $\mathcal{X}$ and $\mathcal{C}(\by)$ occurs in the original parameter space $\mathbb{R}^p$, and is then projected via the functional of interest function into the real line. 
    The confidence interval is then constructed using the minimum and maximum of the quantity of interest $\varphi$ over the intersection $\mathcal X \cap \mathcal{C}(\by)$.
    }
    \label{fig:diagram-intro2}
\end{figure}

\subsection{The \rustburrus conjecture}\label{subsec:intro_b_conject}

The Gaussian linear forward model with nonnegativity constraints and a linear functional of interest is a setting that has attracted significant attention, going back to the works of \cite{burrus1965utilization, rust_burrus}. 
These foundational studies consider the applied problem of unfolding gamma-ray and neutron spectra from pulse-height distributions under rank-deficient linear systems.
They demonstrated that incorporating the nonnegativity physical constraint allowed for the computation of nontrivial (i.e., finite length) intervals for linear functionals of the parameters.
In order to describe the construction of these intervals, consider the canonical form of the Gaussian linear model with nonnegativity constraints, along with a linear functional of interest:
\begin{equation}\label{eq:lineargaussianmodel}
\underbrace{\by = \bK\bx^* + \bm{\varepsilon}, \quad \bm{\varepsilon} \sim \mathcal{N}(\bm{0}, \bI_m)}_{\text{model}}, \quad \text{with} \quad \underbrace{\bx^* \geq \bm{0}}_{\text{constraints}} \quad \text{and} \quad \underbrace{\varphi(\bx) = \bh^\tp \bx}_{\text{functional}}.
\end{equation}
Here, $\bK \in \RR^{m \times p}$ is the forward operator\footnote{Note that the forward operator $\bK$ is allowed to be column rank deficient and the overparameterized setting when $p > m$ is allowed.}, $\bx^* \in \RR^p$ is the true parameter vector, and $\bh \in \RR^p$ contains weights for the functional of interest. 
In this setting, \cite{burrus1965utilization, rust_burrus} posed that the following interval construction yields valid $1-\alpha$ confidence intervals, a result now known as the \emph{\rustburrus conjecture} \cite{rust1994confidence}:
\begin{equation}\label{RB1}
\cI_{\OSB}(\by)
:=
\begin{aligned}
\min_{\bx}/\max_{\bx} \quad & \bh^\tp\bx \\
\st \quad &\Vert \by - 
\bK\bx \Vert_2^2 \leq \psi^2_\alpha(\by),\\
  & \bx \geq \bm{0},  \\
\end{aligned}
\end{equation}
where $\psi^2_\alpha = z_{\alpha/2}^2 + s^2(\by)$. Here $z_\alpha$ is the upper quantile of standard normal such that $\mathbb{P}(Z>z_\alpha) = \alpha$ for $Z \sim \mathcal{N}(0,1)$, and $s^2(\by)$ is defined through an optimization problem as follows: 
\begin{equation}\label{RB2}
\begin{aligned}
s^2(\by) :=
\begin{cases}
\begin{aligned}
    \min_{\bx} \:\qquad & \Vert \by - \bK\bx\Vert^2_2 \\ 
    \st \quad & \bx \geq \bm{0}.
\end{aligned}    
\end{cases}
\end{aligned}
\end{equation}
 
Comparison of \eqref{RB1} with \eqref{simultaneous} shows that Rust and Burrus proposed a \say{simultaneous-like} construction. 
In this construction, the set $\{\bx: \Vert \by - \bK \bx \Vert_2^2 \leq \psi^2_\alpha(\by)\}$ plays the role of $\mathcal{C}(\by)$. 
It typically does not represent a $1-\alpha$ confidence set for $\bx^*$, thus relaxing the stringent assumption of the SSB interval construction. Furthermore, a possible simultaneous interval for this setting can be built by observing that $\| \by-\bK\bx^* \|_2^2 \sim \chi^2_m$.
This yields the following valid $1-\alpha$ interval:
\begin{equation}
\begin{aligned}
\min_{\bx}/\max_{\bx} \quad & \bh^\tp\bx \\
\st \quad &\Vert \by - \bK\bx \Vert_2^2 \leq Q_{\chi^2_m}(1-\alpha)\\
  & \bx \geq \bm{0}.  \\
\end{aligned}
\end{equation}
Here, $Q_{\chi^2_m}$ is the quantile function of a $\chi^2_m$ distribution. 
It should be noted that the data-dependent term $\psi^2_\alpha(\by)$ in \eqref{RB1} could be considerably smaller than $Q_{\chi^2_m}(1-\alpha)$, especially when $m$ is large and $\alpha$ is small.
So if the \rustburrus conjecture were true, it would provide a significant reduction in the length of the interval for problems in the class \eqref{eq:lineargaussianmodel}.
For instance, assuming $\alpha = 0.05$ (so that we are after a $95$\% coverage level), \cite{stanley_unfolding} observe an expected length reduction of about a factor of two across a variety of functionals in a particle unfolding application.
The gain in the interval length originates from the fact that these intervals take into account that we are only required to guarantee coverage for \emph{one specific} functional. 
Given that intervals of the form \eqref{RB1} are designed to provide coverage for one functional at a time, following the nomenclature of \cite{stanley_unfolding}, we refer to these intervals as ``one-at-a-time strict bounds'' or OSB intervals, for short\footnote{\cite{patil,stanley_unfolding} also extend the setting and the conjecture to encompass linear constraints of the form $\boldsymbol{A}\bx \leq \boldsymbol{b}$. 
Such constraints are of interest in practical applications such as X$_{\mathrm{CO2}}$ retrieval and particle unfolding. 
For simplicity, we present only the positivity constraint case here. However, our counterexample based on positivity constraints in \Cref{sec:rust_burrus} will also be sufficient to disprove the conjecture in this general case.}.

\cite{rust_burrus} and subsequently \cite{rust1994confidence} investigated the conjecture posed in \cite{burrus1965utilization}. 
The latter work purported to have found definitive proof for the conjecture's validity. 
However, this claim was later refuted by \cite{tenorio2007confidence} through a two-dimensional counterexample.
In this work, we demonstrate that, in fact, this two-dimensional counterexample proposed in \cite{tenorio2007confidence} is not a valid counterexample. 
However, we present and prove another counterexample that refutes the conjecture and we propose ways to fix the previous faulty results by reinterpreting the conjecture. 
We achieve this through a novel hypothesis test-based framework that not only revisits but also broadens the scope beyond the linear Gaussian setting paired with positivity constraints in which the conjecture was originally proposed.

\subsection{Summary and outline}

In this paper, we frame the problem of confidence interval construction for functionals in constrained, ill-posed problems through the inversion of a particular likelihood ratio test. 
This perspective allows us to reinterpret the interval coverage guarantee in terms of type-I error control associated with the test and, subsequently, the distribution of the log-likelihood ratio under the null hypothesis.
We also establish connections between different fields of hypothesis testing with likelihood ratio tests, optimization-based confidence intervals, and chance-constrained optimization.
A detailed summary of contributions in this paper along with an outline for the paper is given below.

\begin{itemize}[leftmargin=7mm]
    \item[(1)]
    \textbf{Strict bounds intervals from test inversion.}
    In \Cref{sec:data_gen_test_def_inv_arg}, we present a general framework to construct strict bounds intervals through test inversion, resulting in two optimization problems for the interval endpoints. 
    This approach generalizes the Rust--Burrus-type interval technique to potentially nonlinear and non-Gaussian settings. 
    Our main result in \Cref{thm:interval_coverage} proves coverage of the test inversion construction and \Cref{thm:interval_coverage_converse} provides sufficient conditions under which the coverage is tight. 
    Examples in \Cref{subsec:examples_theorem} provide straightforward but concrete analytical illustrations of our framework.
 
    \item[(2)]
    \textbf{General interval construction methodology.}
    In \Cref{sec:interval_methodology}, we present a general methodology for computing confidence intervals that builds on the framework in \Cref{sec:data_gen_test_def_inv_arg}.
    We outline the methodology in \Cref{alg:metaalgo} and discuss two key components: the chance-constrained optimization problem and the stochastic dominance argument in \Cref{subsec:CCO} and \Cref{subsec:stochastic_dominance}, respectively.
    The chance-constrained optimization problem allows us to obtain optimal decision values for the proposed framework, while the stochastic dominance argument provides a theoretical tool to find provable upper bounds.
    
    \item[(3)]
    \textbf{Refuting the \rustburrus conjecture.}
    In \Cref{sec:rust_burrus}, we demonstrate that our method successfully recovers previously proposed OSB intervals for the linear Gaussian setting. 
    In \Cref{thm:burrus_false}, we leverage this novel interpretation to disprove the \rustburrus conjecture \cite{rust_burrus, rust1994confidence} in the general case, by refuting a previously proposed counterexample and providing a new, provably correct counterexample in \Cref{lem:3d-counterexample}. 
    Furthermore, we provide a negative result that disproves a natural generalization of the original conjecture in \Cref{lemma:dimbounding}. 
    Our proof technique provides a method to detect when the Rust--Burrus approach is effective and when it falls short and introduces a means to rectify the earlier erroneous examples. 

    \item[(4)]
    \textbf{Illustrative numerical examples.}
    In \Cref{sec:numerical-examples}, we elucidate our findings through a suite of numerical illustrations. 
    These span various scenarios, including the counterexample to the \rustburrus conjecture. 
    We show that test inversion-based intervals, which have provable guarantees, achieve better coverage calibration than previous approaches.
\end{itemize}

\subsection{Other related work}
\label{subsec:RelatedWork}

Given the effectiveness of the strict bounds methodology in high-dimensional ill-posed inverse problems, this paper seeks to deepen our understanding of these intervals and provide related perspectives by connecting them with the broader statistical literature. 
Specifically, we relate these intervals to the well-developed areas of likelihood ratio tests, test inversions, and constrained inference, enabling us to make rigorous statements about their properties and generalize the methodology beyond its earlier confines. 
We provide a brief overview of earlier work in this area below.

\myparagraph{Confidence intervals in penalized inverse problems}
Various optimization-based strategies exist for constructing confidence intervals for functionals in linear inverse problems with constraints.
A first connection between optimization and inference in inverse problems is given by classical approaches seeking to optimize an objective function to balance data misfit with regularization, while adhering to prior constraints \cite{hansen1992analysis,hansen1993use}. 
It is then common to use the variability of the minimizer to quantify uncertainty.
Another closely related strategy employs Bayesian methods to estimate the posterior distribution of model parameters given a regularizing prior and subsequently constructs credible intervals from marginal distributions \cite{tarantola2005inverse,stuart2010inverse}. 
Since these methods effectively quantify uncertainty around the expectation of the regularized estimator, their coverage is highly dependent on the precision of prior information and strength of regularization \cite{patil, kuusela_phd_thesis, kuusela2015}. 
A recent line of work starting with \cite{javanmard2014} attempts to improve the coverage of confidence intervals derived from penalized estimators by ``de-biasing'' the regularized estimators; however, in practice, guarantees can only be obtained asymptotically and finite-sample performance depends on the choice of tuning parameters.

An alternative line of work in optimization-based confidence intervals focuses on ensuring correct frequentist finite-sample coverage.  
This approach is more resistant to the aforementioned challenges associated with relying heavily on prior assumptions or de-biasing and offers a robust framework for uncertainty quantification.
In the following section, we will describe these optimization-based methods in more detail.

\myparagraph{Optimization-based confidence intervals and the \rustburrus conjecture}
This paper is largely motivated by the literature (\cite{rust_burrus, oleary_rust, rust1994confidence, tenorio2007confidence, patil, stanley_unfolding}) surrounding the Burrus conjecture (see \Cref{sec:rust_burrus} for further discussion), which makes a claim about how to set a calibration parameter in an optimization-based confidence interval construction so that the resulting interval has a desired level of coverage for a single functional.
For intervals with a simultaneous coverage guarantee for an arbitrary collection of functionals, \cite{stark1992inference} provides the most general optimization-based confidence interval construction.
While these intervals provide the desired coverage, they are overly conservative in terms of length when compared to intervals calibrated for a specific functional \cite{stanley_unfolding}.
These prior works consider only the Gaussian linear inverse problem and can thus be seen as a particular instance of the more general optimization-based confidence intervals treated in this paper. 

\myparagraph{Inverting likelihood ratio tests and constrained inference}

Traditionally, optimization-based confidence interval constructions in inverse problems have developed somewhat independently of the broader statistical literature, often overlooking the duality between confidence intervals and hypothesis testing \cite{casella_berger, wasserman2004all, lehmann_romano}. 
Our work reinterprets these optimization-based confidence intervals from the inverse problem literature as inverted hypothesis tests and situates them within the realm of constrained testing and inference; see, e.g., \cite{gourieroux, wolak87, robertson, shapiro1988towards, wolak89,molenberghs2007likelihood}, among others.

The constrained inference literature often employs the $\bar{\chi}^2$ distribution, a convex combination of $\chi^2$ distributions with different degrees of freedom, dictated by the problem constraints.
Recent work in \cite{yu2019constrained} has extended these constrained testing frameworks to high-dimensional settings with linear inequality constraints, examining both sparse and non-sparse scenarios. 
Although such tests can be more powerful than their unconstrained counterparts, their definitions typically limit the null hypothesis to linear subspaces, complicating their use in test inversion scenarios~\cite{silvapulle2011constrained}.

Although there have been applications of constrained test inversion (\cite{Feldman_1998}), these are limited in scope due to grid-based inversion approaches. 
The statistics literature contains other approaches to inverting likelihood ratio tests (LRTs), which center around sampling procedures 
\cite{cash1979parameter, venzon1988method, garthwaite_buckland, LikTestInv, neale1997use, schweiger}.
Alternatively, one can sample from the parameter space and the forward model to generate training data for a quantile regression, which can then be used to invert an LRT (\cite{dalmasso2020confidence, waldo,dalmasso2021likelihood, fisher2020efficient}). 
Since these latter approaches require sampling points in the parameter space, they are practically limited to compact parameter spaces and may encounter difficulties with high-dimensional parameters. 
In scenarios where the data can be split, approaches such as Universal Inference \cite{universal_inference} offer a way to obtain confidence sets for irregular likelihoods with nonasymptotic coverage.

\myparagraph{Worst-case and likelihood-free methods}

Most of the approaches and methods referenced and described above make strong assumptions about the underlying data-generating distribution (e.g., linear forward model and Gaussian noise).
To generalize these assumptions, one can either take a worst-case approach within the model class (e.g., \cite{donoho1994statistical} which looks at worst-case confidence intervals for linear inverse problems) or remove distribution assumptions altogether. 
For example, Optimal Uncertainty Quantification (OUQ) (see, e.g., \cite{owhadiOUQ}) does not assume a particular likelihood function to perform statistical inference by relying instead on worst-case bounds.
If one is willing to make boundedness assumptions on the parameter space, simulation-based inference approaches such as \cite{LikFree1, LikFree2,dalmasso2020confidence, waldo,dalmasso2021likelihood,cranmer2020frontier,cranmer2016approximating} explore the use of sampling-only access to the likelihood, typically through a simulator, which has found particular relevance in the physical sciences.
While these likelihood-free methods are advantageous in contexts where the likelihood is uncertain, unknown or accessible only through a simulator, they tend to yield conservative estimates when a well-defined likelihood is available.

\section{Strict bounds intervals from test inversion}
\label{sec:data_gen_test_def_inv_arg}

Suppose that we observe data $\by \in \mathbb{R}^m$ according to a data-generating process $\by \sim P_{\bx^*}$. Here, $P_{\bx^*}$ is a distribution that depends on a fixed but unknown parameter $\bx^* \in \mathbb{R}^p$. 
Furthermore, suppose that we have prior knowledge that this parameter $\bx^*$ lies in a constraint set $\mathcal{X} \subseteq \mathbb{R}^p$, namely $\bx^* \in \mathcal{X}$. 
Given a nominal coverage level $1-\alpha$, where $\alpha \in (0,1)$, this paper investigates methods for constructing a $1-\alpha$ confidence interval for $\varphi(\bx^*)$, where $\varphi: \mathbb{R}^p \to \mathbb{R}$ is a known one-dimensional quantity of interest\footnote{Confidence sets of several functionals of interest with guarantees can be constructed by using the proposed method with, e.g., Bonferroni correction, but studying the performance of that approach is beyond the scope of this work}
(we will also refer to $\varphi$ as a functional of interest).
More precisely, we are interested in constructing an interval 
$\cI_\alpha(\by) \subseteq \mathbb{R}$ 
for $\varphi(\bx^*)$ that satisfies the following coverage requirement: 
\begin{equation}
    \label{eq:coverage_requirement}
    \mathbb{P}_{\by \sim P_{\bx^*}} \big(\varphi(\bx^*) \in \cI_\alpha(\by) \big) \geq 1 - \alpha, \quad \text{for all} \quad \bx^* \in \mathcal{X}.
\end{equation}
Our primary focus lies in intervals that: (i) effectively utilize the information that $\bx^* \in \mathcal X$, (ii) are valid (i.e., satisfying the coverage requirement in \eqref{eq:coverage_requirement}) in the finite data and noisy regimes (rather than, e.g., in the large system or noiseless limits), (iii) do not make overly restrictive assumptions (e.g., identifiability) about the structure of the parametric model $P_{\bx^*}$, and (iv) are short in length\footnote{Note that length will, in general, depend on the unknown parameter $\bx^*$. 
There are several notions for the ``optimality'' of the method with respect to length, such as minimax length \cite{donoho1994statistical,schafer2009constructing} or expected length \cite{stanley_unfolding}, among others.}.
We view the observation vector $\by$ as a single observation in $\RR^{m}$ drawn from a multivariate distribution $P_{\bx^*}$. 
This may include the case of repeated sampling (i.i.d.\ or not) from an experiment and aggregating the samples in a vector. 
In this case, $P_{\bx^*}$ is then defined as the measure that accounts for all the observations.\footnote{For example, in the typical case where a $d$ dimensional vector is observed a total of $n$ times, we aggregate the results in an $m = n \times d$ dimensional vector. 
Throughout, we use $m$ to denote the total dimensionality of the observation vector.}

\subsection{Review: classical test inversion for simple null hypotheses} \label{subsec:test_inversion}

We briefly review the concept of test inversion and the duality between hypothesis testing and confidence sets upon which the subsequent subsections will be built.
After observing $\by \sim P_{\bx^*}$, two classical statistical tasks emerge: (i) determining whether $\bx^* = \bx$ for a particular $\bx \in \mathcal X$ at a significance level $\alpha$ (hypothesis testing), and (ii) constructing a subset of $\mathcal X$ that contains $\bx^*$ with a coverage level $1 -\alpha$ (confidence set building). 
In hypothesis testing, for a given parameter $\bx$, one can consider the hypothesis test:
\begin{equation} \label{eq:h_test_classic}
    H_0 \colon \bx^* = \bx \quad \text{versus} \quad H_1 \colon \bx^* \neq \bx.
\end{equation}
We then build an acceptance region $A(\bx)$ in the data space (the space in which the observations $\by$ live) corresponding to the observations that would not reject $H_0$, with the condition that $H_0$ is rejected with probability at most $\alpha$ when it is true. 
In confidence set building, one builds a subset in parameter space (the space in which the parameters $\bx$ live) as a function of the data, $\cC(\by)$, such that it contains $\bx^*$ with probability at least $1-\alpha$ (over repeated samples of $\by \sim P_{\bx^*}$). 

Lifting to the product space of the data and parameter spaces (see \Cref{fig:duality} for an illustration), both tasks amount to the construction of a compatibility region $\mathcal S$. 
For a fixed observation $\by$, a confidence set is given by $\cC(\by) = \{ \bx \colon (\by, \bx) \in \mathcal S\}$, and for a fixed parameter $\bx$, the acceptance region is given by $\cA(\bx) = \{ \by \colon (\by, \bx) \in \mathcal S\}$. 
Observe that $\mathbb{P}(\by \in \cA(\bx)) = \mathbb{P}(\bx \in \cC(\by))$. 
Therefore, a procedure that forms confidence sets with coverage $1-\alpha$ for any possible data $\by$ also creates a procedure that yields valid hypothesis tests at the level $\alpha$ for any possible parameter value $\bx$, and vice versa. 
This observation can be used to create confidence sets as the set of parameter values that would not be rejected by a hypothesis test, a construction known as test inversion (see, e.g., Chapter~7 of \cite{casella_berger} or Chapter~5 of \cite{Panaretos2016}).

\begin{SCfigure}[50][!ht]
    \centering
    \includegraphics[width = 0.4\columnwidth]{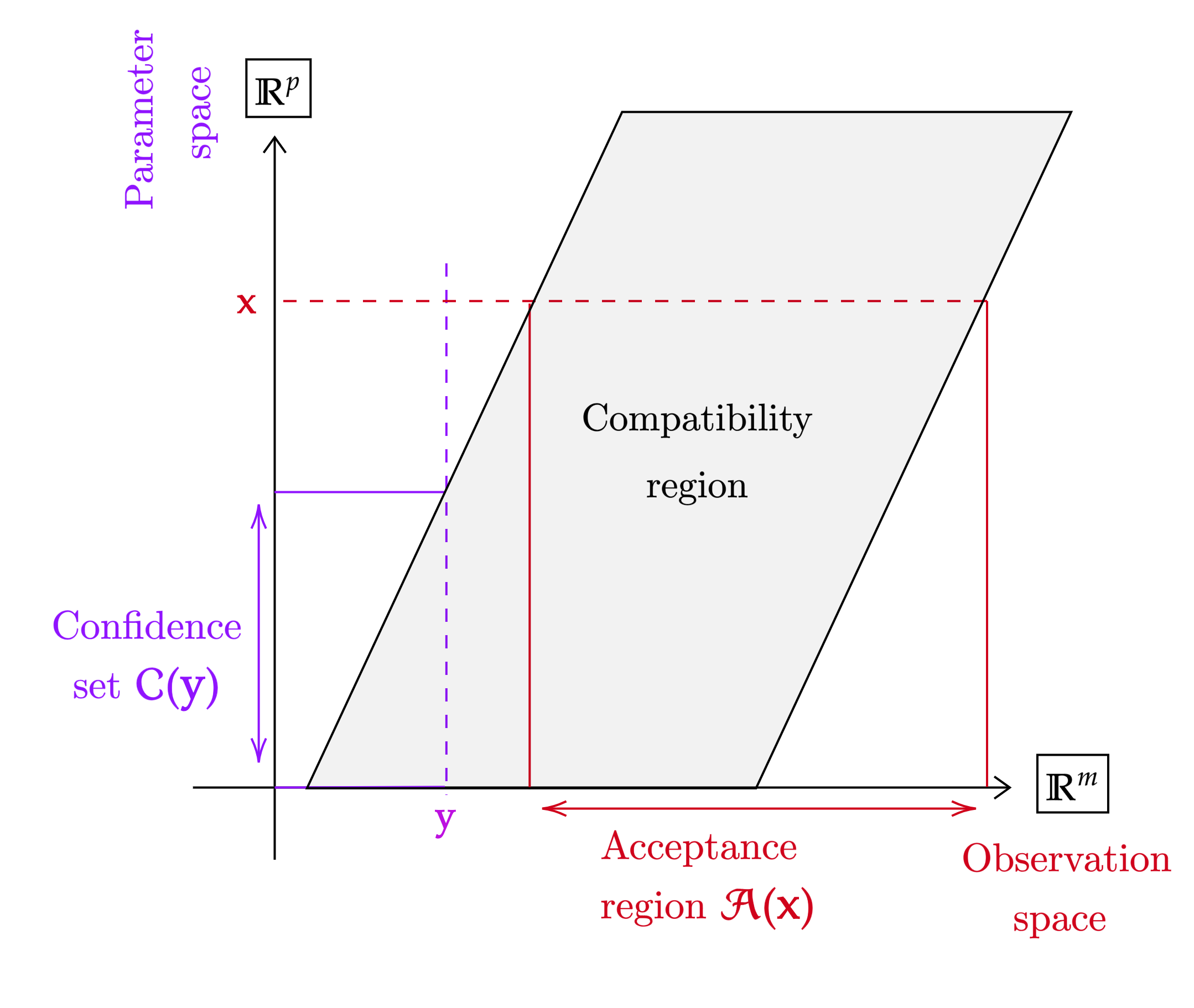}
    \caption{
    Illustration of the classical duality between hypothesis testing and confidence set building as seen in the product space of the data and parameter spaces. 
    Pairs of the dual hypothesis test and the confidence set can be viewed as a set $\mathcal S$ in the product space (the compatibility region). 
    For fixed data $\by$, a confidence set is given by $\cC(\by) = \{ \bx: (\by, \bx) \in \mathcal S\}$, and for a fixed parameter $\bx$, the acceptance region is given by $\cA(\bx) = \{ \by: (\by, \bx) \in \mathcal S\}$.
    \vspace{3em}
    }
    \label{fig:duality}
\end{SCfigure}

\subsection{Formulation and inversion of constrained likelihood ratio tests}\label{subsec:LRT}

The starting point of this work is the inversion of specific hypothesis tests that can incorporate the constraint information $\mathcal X$ and the functional of interest $\varphi$. 
We will establish that the test inversion can be achieved by solving two endpoint optimization problems. 
We note that unlike the simple null versus composite alternative tests \eqref{eq:h_test_classic} described in \Cref{subsec:test_inversion}, the tests we will consider have composite nulls. We focus on the continuous case and assume that the Lebesgue measure dominates the set of distributions $\mathcal P := \{P_{\bx} \mid \bx \in \mathcal X\}$.
However, a discrete analog can also be constructed using a similar approach as in \cite{Feldman_1998}. 
Let $L_\bx$ be the density of $P_\bx$, and let $\ell_\bx := \log L_\bx$. 
For any $\mu \in \mathbb{R}$, denote the level sets of the quantity of interest $\varphi$ by $\Phi_\mu$. These are defined as follows: 
\begin{equation} \label{eq:affine_subspace}
    \Phi_\mu := \{\bx \colon \varphi(\bx) = \mu \} \subseteq \mathbb{R}^p.
\end{equation}
Subsequently, define a hypothesis test $T_\mu$ as follows:
\begin{equation} \label{eq:h_test_oi_full}
    H_0 \colon \bx^* \in \Phi_\mu \cap \mathcal{X} \quad \text{versus} \quad H_1 \colon \bx^* \in \mathcal{X} \setminus \Phi_\mu.
\end{equation}
We can test hypothesis~\eqref{eq:h_test_oi_full} (for a fixed $\mu$) with a Likelihood Ratio (LR) test statistic defined as the following function of the observed data $\by$:
\begin{equation} \label{eq:likelihood_ratio}
    \Lambda(\mu, \by) := \frac{\underset{\bx \in \Phi_\mu \cap \mathcal{X}}{\text{sup}} \; L_{\bx}(\by)}{\underset{\bx \in \mathcal{X}}{\text{sup}} \; L_{\bx}(\by)}.
\end{equation}
The corresponding log-likelihood ratio (LLR) statistic $\lambda(\mu, \by)$ is given by:
\begin{align} \label{eq:log_likelihood_ratio_full}
     \lambda(\mu, \by) = -2 \log \Lambda(\mu, \by) &= -2 ~ \bigg\{\underset{\bx \in \Phi_\mu \cap \mathcal{X}}{\text{sup}} \; \ell_{\bx}(\by)- \underset{\bx \in \mathcal{X}}{\text{sup}} \; \ell_{\bx}(\by) \bigg\} \\ &= \underset{\bx \in \Phi_\mu \cap \mathcal{X}} {\text{inf}} \; -2\ell_{\bx}(\by)- \underset{\bx \in \mathcal{X}}{\text{inf}} \; -2 \ell_{\bx}(\by). \notag
\end{align}
As is standard (see, e.g., \cite{casella_berger,wasserman2004all}), we use the supremum over all $\mathcal X$ in the denominator of \eqref{eq:likelihood_ratio}, instead of over $\mathcal X \setminus \Phi_\mu$\footnote{\cite{schervish} provides conditions for the equality of both test statistics.}. 
The factor of $-2$ helps connect with the standard likelihood ratio test in the context of Wilks' theorem and is needed, together with the optimization being over the whole space, to reinterpret the previous constrained inference intervals as coming from the inversion of this test (see \Cref{sec:rust_burrus}). 

\myparagraph{Motivation behind the choice of test and test statistic}
In addition to the reinterpretation of the previous constrained inference intervals as a result of inverting this test, there are other theoretical and practical reasons that make it a reasonable choice for this work. 
Theoretically, the LR emerges as the optimal test statistic (resulting in the most powerful level-$\alpha$ test) in the simple versus simple hypothesis testing setting via the Neyman--Pearson Lemma \cite{casella_berger, lehmann_romano}. 
Although uniformly most powerful tests do not exist in general, LR tests have been effective in several contexts. 
For example, \cite{wald_1943} provides some optimality properties for the likelihood ratio test in terms of its asymptotic average power.
Although our test of interest does not fall under the simple versus simple paradigm and we are interested in nonasymptotic properties, these two properties support the sensibility of adopting the LR-based test. 
Furthermore, the literature on constrained inference \cite{robertson, silvapulle2011constrained} extensively uses the LR, deriving both asymptotic and nonasymptotic log-likelihood ratio (LLR) distributions in various scenarios, often leading to the $\bar{\chi}^2$ distribution. 
These characterizations indicate that, in very specific situations, it is possible to obtain the distribution of the test statistic under the null hypothesis, either exactly or in an asymptotic sense. 
Our setting extends well beyond those situations because our setup is general, and we do not make particular assumptions on the likelihood model or the constraint set. 

\myparagraph{The distribution of the LLR and test inversion}
In hypothesis testing, we reject the null hypothesis when the values of $\lambda(\mu, \by)$ exceed a threshold. 
This indicates that there is substantial evidence against the data being generated by a distribution in the composite null defined by $\mu$.
To choose a rejection region, we next study the distribution of the LLR, denoted as $\lambda(\mu, \by)$, in the context where $\mu = \varphi(\bx)$ (pertaining to the null hypothesis) and $\by \sim P_{\bx}$, a data sampling model, across various values of $\bx \in \mathcal X$.
Let $F_\bx$ denote the distribution of $\lambda(\varphi(\bx), \by)$ for any $\bx \in \mathcal X$, where $\by \sim P_\bx$.
To simplify the notation, we will write $\lambda \sim F_\bx$ to indicate that an LLR is sampled following the procedure described above.

To ensure an $\alpha$-level test for test inversion, we need to control the distribution of the test statistic under the null hypothesis. 
Since the null is composite, the false positive rate must hold for any parameter under the null hypothesis $H_0$. 

Suppose that we are conducting a test $T_\mu$ to determine whether $\mu^* = \varphi(\bx^*)$ equals a particular $\mu \in \varphi(\mathcal X) \subseteq \mathbb{R}$, that is, $\bx^* \in \Phi_{\mu} \cap \mathcal{X}$.
We use $\lambda > q_\alpha$ as the rejection region, where $q_\alpha$ is a predetermined decision threshold. 
Under the null hypothesis, if the decision threshold satisfies:
\begin{equation} \label{eq:type1_err_c_alpha}
    \sup_{\bx \in \Phi_{\mu} \cap \mathcal{X}} \mathbb{P}_{\lambda \sim F_{\bx}} \left(\lambda > q_\alpha \right) \leq \alpha
\end{equation}
for all $\alpha \in (0, 1)$, then we say $T_\mu$ is a \emph{level-$\alpha$ test}.\footnotemark
\footnotetext{Here $q_\alpha$ is the decision value corresponding to intervals with a coverage probability of 1-$\alpha$, aligning with classical textbook notation (see, e.g., \cite{casella_berger}, \cite{wasserman2004all}). 
For any random variable $Z$, we will denote with the subscript $\alpha$ the cutoff points that satisfy $\mathbb{P}(Z > z_\alpha) = \alpha$.}

Inverting the test with respect to $\mu$ will require choosing an appropriate $q_\alpha$ for all $\mu$; henceforth we will denote it as $q_\alpha(\mu)$. 

We seek to invert this test using a methodology similar to that outlined in \Cref{subsec:test_inversion}, but adapted to accommodate the composite null hypothesis.
The acceptance region is formally defined as:
\begin{equation} \label{eq:acceptance_region_c_alpha}
    \cA_\alpha(\mu) := \left\{ \by: \lambda(\mu, \by) \leq q_\alpha(\mu) \right\}.
\end{equation}
Subsequently, we define the proposed confidence set for $\mu^* = \varphi(\bx^*) \in \mathbb{R}$ through test inversion as follows:
\begin{equation}\label{eq:conf_set_def}
    \cC_\alpha(\by) := \{ \mu \colon \lambda(\mu, \by) \leq q_\alpha(\mu) \}.
\end{equation}

We prove in \Cref{lem:conf_set_coverage} that if \eqref{eq:type1_err_c_alpha} is satisfied for $\mu^* := \varphi(\bx^*)$ (that is, $T_{\mu^*}$ is a level-$\alpha$ test), the resulting confidence set will have the desired $1 - \alpha$ coverage, thus extending the classical test inversion framework to our specific case.

\begin{lemma}[Coverage of the inverted test] \label{lem:conf_set_coverage}
Let $\alpha \in (0, 1)$.
Let $\bx^*$ be the true parameter value and $\mu^*$ its image under $\varphi$. 
If $T_{\mu^*}$ is a level-$\alpha$ test, then 
\[
\mathbb{P}_{\by \sim P_{\bx^*}} \left( \mu^* \in \cC_\alpha(\by) \right) \geq 1 - \alpha.
\]
\end{lemma}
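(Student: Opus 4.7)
The plan is to prove this coverage lemma by exploiting the same pivotal event-equivalence that underlies classical test inversion, adapted to the composite-null setting. The lemma is essentially a bookkeeping exercise: the only nontrivial substantive input is the level-$\alpha$ property of $T_{\mu^*}$, and the proof should consist of translating between the data-space description (acceptance region $\cA_\alpha(\mu^*)$) and the parameter-space description (confidence set $\cC_\alpha(\by)$) via their common definition through the LLR.

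First, I would unfold the definitions. By \eqref{eq:conf_set_def}, the event $\{\mu^* \in \cC_\alpha(\by)\}$ is exactly $\{\lambda(\mu^*, \by) \leq q_\alpha(\mu^*)\}$, which by \eqref{eq:acceptance_region_c_alpha} equals $\{\by \in \cA_\alpha(\mu^*)\}$. Thus
\[
\mathbb{P}_{\by \sim P_{\bx^*}}\!\big(\mu^* \in \cC_\alpha(\by)\big)
= \mathbb{P}_{\by \sim P_{\bx^*}}\!\big(\lambda(\mu^*, \by) \leq q_\alpha(\mu^*)\big).
\]
Next, I would observe that since $\mu^* = \varphi(\bx^*)$ and $\bx^* \in \mathcal{X}$, the true parameter satisfies $\bx^* \in \Phi_{\mu^*} \cap \mathcal{X}$. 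By the definition of $F_{\bx^*}$ as the law of $\lambda(\varphi(\bx^*), \by)$ under $\by \sim P_{\bx^*}$, the right-hand side above is precisely $\mathbb{P}_{\lambda \sim F_{\bx^*}}(\lambda \leq q_\alpha(\mu^*))$.

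Finally, I would invoke the level-$\alpha$ hypothesis on $T_{\mu^*}$. By \eqref{eq:type1_err_c_alpha}, the supremum of $\mathbb{P}_{\lambda \sim F_{\bx}}(\lambda > q_\alpha(\mu^*))$ over $\bx \in \Phi_{\mu^*} \cap \mathcal{X}$ is at most $\alpha$; specializing to the feasible point $\bx^*$ yields $\mathbb{P}_{\lambda \sim F_{\bx^*}}(\lambda > q_\alpha(\mu^*)) \leq \alpha$, and taking complements gives the desired lower bound $1 - \alpha$. Chaining the three displays completes the proof.

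There is no real obstacle here: the only subtlety is being careful that $\bx^*$ itself lies in the composite null $\Phi_{\mu^*} \cap \mathcal{X}$ (so that the uniform control supplied by the level-$\alpha$ property actually applies to the sampling distribution generating $\by$), and that the confidence set in \eqref{eq:conf_set_def} and acceptance region in \eqref{eq:acceptance_region_c_alpha} are defined through the \emph{same} threshold function $q_\alpha(\cdot)$ of $\mu$, so the event equivalence is exact. Both are immediate from the setup, so the lemma follows in a few lines.
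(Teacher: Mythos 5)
Your proposal is correct and follows essentially the same route as the paper: both identify the event $\{\mu^* \in \cC_\alpha(\by)\}$ with $\{\by \in \cA_\alpha(\mu^*)\}$ and then use that $\bx^* \in \Phi_{\mu^*} \cap \mathcal{X}$ to apply the level-$\alpha$ bound (the paper passes through the supremum over the composite null, while you specialize directly to the feasible point $\bx^*$, which is the same estimate). No gaps.
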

\begin{proof}[Proof sketch]
    The proof is based on a straightforward test inversion argument.
    For a detailed proof, see \Cref{proof:conf_set_coverage}.
\end{proof}

To ensure that condition \eqref{eq:type1_err_c_alpha} holds in practice, when $\bx^*$ and therefore $\mu^*$ are both unknown, we need to satisfy this condition for all possible null hypotheses.
Specifically, we choose an appropriate $q_\alpha(\mu)$ for each $\mu$ to ensure that all hypothesis tests $T_\mu$ are level-$\alpha$.
Formally, this is expressed as:
\begin{equation} \label{eq:combined_type_1}
   \sup_{\mu \in \varphi(\mathcal X)} \sup_{\bx \in \Phi_\mu \cap \mathcal{X}} \mathbb{P}_{\lambda \sim F_{\bx}} \left(\lambda> q_\alpha(\mu) \right) \leq \alpha.
\end{equation}
This condition is equivalent\footnote{
Note that every $\bx \in \mathcal{X}$ is accounted for in \eqref{eq:combined_type_1} since $\mu = \varphi(\bx)$. 
} to: 
\begin{equation} \label{eq:combined_type_1_better}
  \sup_{\bx \in \mathcal{X}} \mathbb{P}_{\lambda \sim F_{\bx}} \left(\lambda> q_\alpha(\varphi(\bx)) \right) \leq \alpha.
\end{equation}
Although \eqref{eq:combined_type_1_better} lacks the interpretation of \eqref{eq:combined_type_1} of having hypothesis tests for each different $\mu \in \varphi(\mathcal{X})$, it simplifies the calculations. 
We refer to a set of values $q_\alpha(\mu)$ that satisfy \eqref{eq:combined_type_1_better} (or equivalently \eqref{eq:combined_type_1}) as \emph{valid values}. 
Since $\mu$ in \eqref{eq:combined_type_1} is equal to $\varphi(\bx)$ as $\bx \in \Phi_\mu$, we can use $q_\alpha(\mu)$ and $q_\alpha(\varphi(\bx))$ interchangeably.

From \Cref{lem:conf_set_coverage}, we know that valid values can be used in \eqref{eq:conf_set_def} to construct a confidence set for $\mu^*$ with the correct $1-\alpha$ coverage. 
Moreover, as argued in the proof of \Cref{lem:conf_set_coverage}, the probability that the set \eqref{eq:conf_set_def} covers the unknown $\mu^*$ is given by:
\begin{equation}\label{eq:exactprobability}
    \mathbb{P}_{\by \sim P_{\bx^*}} \left( \mu^* \in \cC_\alpha(\by) \right) = 1-\mathbb{P}_{\lambda \sim F_{\bx^*}} \left(\lambda > q_\alpha(\mu^*) \right),
\end{equation}
which is guaranteed to be at least $1-\alpha$ by the condition \eqref{eq:combined_type_1}. 
To obtain intervals with the smallest possible size while maintaining coverage, we aim to find the optimal decision values $q_\alpha(\mu)$, which are solutions to optimization problems involving the quantiles $Q_{F_{\bx}}: [0,1] \to \mathbb{R}$ of the distributions of the family $\{F_{\bx}, \bx \in \mathcal{X}\}$.
\begin{lemma}[Optimal decision values]\label{lemma:quantile}
    The optimal (smallest valid) value of $q_\alpha(\mu)$ is given by the maximum quantile (MQ) optimization problem:
    \begin{equation} \label{eq:QuantileOpt1}
    \maxQ_{\mu, 1-\alpha} := \sup_{\bx \in  \Phi_\mu \cap \mathcal{X}} Q_{F_\bx}(1-\alpha).
    \end{equation}
    Furthermore, if one wants to choose a single $q_\alpha$ for all $\mu$, then the optimal value is given by:
    \begin{equation} \label{eq:QuantileOpt2}
        \maxQ_{1-\alpha} := \sup_{\mu \in \varphi(\mathcal{X})} \maxQ_{\mu, 1-\alpha} = \sup_{\bx \in \mathcal{X}}  Q_{F_\bx}(1-\alpha).
    \end{equation}
\end{lemma}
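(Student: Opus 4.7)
The plan is to reduce the validity condition \eqref{eq:combined_type_1} to a pointwise quantile condition and then use monotonicity to identify the smallest threshold. Recall the standard quantile $Q_{F_\bx}(1-\alpha) = \inf\{t : F_\bx(t) \geq 1-\alpha\}$. Since $F_\bx$ is right-continuous, we have $F_\bx(Q_{F_\bx}(1-\alpha)) \geq 1-\alpha$, so $\mathbb{P}_{\lambda \sim F_\bx}(\lambda > Q_{F_\bx}(1-\alpha)) \leq \alpha$; conversely, for any $q < Q_{F_\bx}(1-\alpha)$ the definition of the infimum gives $F_\bx(q) < 1-\alpha$, hence $\mathbb{P}(\lambda > q) > \alpha$. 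Thus $Q_{F_\bx}(1-\alpha)$ is precisely the smallest threshold making $\mathbb{P}(\lambda > q) \leq \alpha$ under $F_\bx$.

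For the first claim, fix $\mu \in \varphi(\mathcal{X})$. The validity requirement \eqref{eq:combined_type_1} specialized to this $\mu$ says $\mathbb{P}_{\lambda \sim F_\bx}(\lambda > q_\alpha(\mu)) \leq \alpha$ for every $\bx \in \Phi_\mu \cap \mathcal{X}$. By the pointwise characterization above, this is equivalent to $q_\alpha(\mu) \geq Q_{F_\bx}(1-\alpha)$ for every such $\bx$, i.e., $q_\alpha(\mu) \geq \sup_{\bx \in \Phi_\mu \cap \mathcal{X}} Q_{F_\bx}(1-\alpha) = \maxQ_{\mu, 1-\alpha}$. To check that the bound $q_\alpha(\mu) = \maxQ_{\mu,1-\alpha}$ is itself valid (even if the supremum is not attained), note that $q_\alpha(\mu) \geq Q_{F_\bx}(1-\alpha)$ for every $\bx \in \Phi_\mu \cap \mathcal{X}$ implies $\mathbb{P}(\lambda > q_\alpha(\mu)) \leq \alpha$ for each such $\bx$, so taking the supremum preserves the bound. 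This yields the first identity.

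For the second claim, a single $q_\alpha$ chosen independently of $\mu$ is valid iff \eqref{eq:combined_type_1_better} holds, i.e.\ $q_\alpha \geq Q_{F_\bx}(1-\alpha)$ for all $\bx \in \mathcal{X}$, which is minimized at $\sup_{\bx \in \mathcal{X}} Q_{F_\bx}(1-\alpha)$. The equality $\sup_{\mu \in \varphi(\mathcal{X})} \maxQ_{\mu,1-\alpha} = \sup_{\bx \in \mathcal{X}} Q_{F_\bx}(1-\alpha)$ then follows from the partition $\mathcal{X} = \bigcup_{\mu \in \varphi(\mathcal{X})} (\Phi_\mu \cap \mathcal{X})$ (every $\bx \in \mathcal{X}$ lies in $\Phi_{\varphi(\bx)} \cap \mathcal{X}$), so taking a supremum over $\mathcal{X}$ equals iterating the supremum over level sets.

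The only delicate point is handling the case where the supremum in \eqref{eq:QuantileOpt1} is not attained; the argument above sidesteps this by using the inequality $q \geq Q_{F_\bx}(1-\alpha)$ uniformly rather than invoking a maximizer. Apart from this, the proof is essentially a direct unwinding of the definitions, so I expect no serious obstacle.
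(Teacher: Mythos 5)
Your proposal is correct and follows essentially the same route as the paper: validity of the supremum follows from monotonicity of the survival function together with the defining property of the quantile, minimality follows because any smaller value is beaten by some $\bx$ with a larger quantile, and the second identity comes from partitioning $\mathcal{X}$ into the level sets $\Phi_\mu \cap \mathcal{X}$. Your explicit use of right-continuity and your handling of a non-attained supremum are slightly more careful than the paper's write-up (which asserts $\mathbb{P}_{\lambda\sim F_{\bx}}(\lambda > Q_{F_\bx}(1-\alpha)) = \alpha$ where only $\le \alpha$ holds in general), but the substance is identical.
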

\begin{proof}[Proof sketch] 
It can be directly checked that the proposed quantities are valid and that using any smaller decision value leads to intervals with undercoverage for at least one point. See \Cref{proof:lemma:quantile} for more details.
\end{proof}
In the event that only an upper bound on the quantities defined in \Cref{lemma:quantile} can be obtained, those can also be used as valid decision values, as stated precisely in the following corollary.
\begin{corollary}
    For every $\mu \in \mathbb{R}$, $\nu(\mu) \geq \maxQ_{\mu, 1-\alpha}$ (as defined in \Cref{lemma:quantile})  if and only if $\nu(\mu)$ is a valid decision value for $T_\mu$ for a particular $\alpha$.
    In addition, $\nu \geq \maxQ_{1-\alpha}$ if and only if $\nu$ is a valid decision value for $T_\mu$ for all $\mu \in \mathbb{R}$ for a particular $\alpha$.
\end{corollary}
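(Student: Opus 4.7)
The plan is to leverage Lemma \ref{lemma:quantile} together with the monotonicity of tail probabilities in the decision threshold. The corollary asserts two equivalences (pointwise in $\mu$, and uniform in $\mu$); I would handle them in that order, since the second follows from nearly the same argument as the first.

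For the pointwise statement, I would first prove the ``if'' direction. Fix $\mu$ and suppose $\nu(\mu) \geq \maxQ_{\mu, 1-\alpha}$. For each $\bx \in \Phi_\mu \cap \mathcal{X}$, the definition \eqref{eq:QuantileOpt1} gives $\maxQ_{\mu, 1-\alpha} \geq Q_{F_\bx}(1-\alpha)$, and the map $t \mapsto \mathbb{P}_{\lambda \sim F_\bx}(\lambda > t)$ is nonincreasing, so
\[
\mathbb{P}_{\lambda \sim F_\bx}\bigl(\lambda > \nu(\mu)\bigr) \;\leq\; \mathbb{P}_{\lambda \sim F_\bx}\bigl(\lambda > Q_{F_\bx}(1-\alpha)\bigr) \;\leq\; \alpha,
\]
where the last inequality is the standard upper-quantile property. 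Taking the supremum over $\bx \in \Phi_\mu \cap \mathcal{X}$ yields condition \eqref{eq:type1_err_c_alpha}, so $\nu(\mu)$ is valid. For the ``only if'' direction, I would argue by contradiction: if $\nu(\mu)$ is valid but $\nu(\mu) < \maxQ_{\mu,1-\alpha}$, then $\nu(\mu)$ would be a valid decision value strictly smaller than $\maxQ_{\mu,1-\alpha}$, contradicting the optimality (smallest valid value) established in Lemma \ref{lemma:quantile}.

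For the second, uniform statement, I would replay the same two-step argument with $\maxQ_{1-\alpha}$ in place of $\maxQ_{\mu,1-\alpha}$, now requiring $\nu \geq \maxQ_{1-\alpha}$ to hold as a single scalar bound. Using \eqref{eq:QuantileOpt2} and the monotonicity of the tail, the ``if'' direction gives the uniform Type I control \eqref{eq:combined_type_1_better}; the ``only if'' direction again invokes the optimality of $\maxQ_{1-\alpha}$ from Lemma \ref{lemma:quantile}.

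I do not anticipate any real obstacle. The only minor bookkeeping points are: (i) handling $\mu \notin \varphi(\mathcal{X})$, for which $\Phi_\mu \cap \mathcal{X} = \emptyset$ so the supremum in \eqref{eq:QuantileOpt1} is $-\infty$ and any $\nu(\mu)$ is vacuously valid, keeping the equivalence intact; and (ii) the quantile inequality $\mathbb{P}_{\lambda\sim F_\bx}(\lambda > Q_{F_\bx}(1-\alpha)) \leq \alpha$, which holds by definition of the upper quantile regardless of whether $F_\bx$ has atoms. Beyond these, the corollary is essentially a direct reformulation of Lemma \ref{lemma:quantile} expressing that the set of valid decision values is exactly the upper ray determined by the optimal value.
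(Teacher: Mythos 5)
Your proposal is correct and matches the paper's argument: the paper likewise disposes of the corollary by substituting $\nu$ into the probability statements of \Cref{lemma:quantile}, using monotonicity of the tail for the ``if'' direction and the lemma's optimality (smallest valid value) claim for the ``only if'' direction. Your write-up simply makes explicit the steps the paper leaves implicit, including the vacuous case $\Phi_\mu \cap \mathcal{X} = \emptyset$.
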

    The result follows immediately from substituting the proposed $\nu$ values into the probability statements in \Cref{lemma:quantile}.
Theoretical and computational methods for obtaining valid $q_\alpha(\mu)$ are discussed in \Cref{sec:interval_methodology}, and we investigate the computation of $\cC_\alpha(\by)$ via optimization techniques in the next subsection, assuming valid $q_\alpha(\mu)$ are known. 
\subsection{Characterizing the inverted confidence set via optimization problems} \label{subsec:conf_set_as_opt}

The set defined in \eqref{eq:conf_set_def} produces a random collection of real numbers that contains the true functional value with a probability of at least $1 - \alpha$. 
Although this set is not necessarily an interval, it is contained within an interval whose (possibly infinite) extremes are computable through optimization techniques.

Given a valid $q_\alpha(\mu)$, which satisfies either \eqref{eq:combined_type_1} or \eqref{eq:combined_type_1_better}, let us define the following sets:
\begin{align}
\mathcal{D}(\by) &:= \{ \bx \colon -2\ell_{\bx}(\by) \leq q_\alpha(\varphi(\bx)) + \inf_{\bx' \in \mathcal X} -2\ell_{\bx'}(\by) \} \subseteq \mathbb R^p, \label{eq:defD}\\
\bar{\mathcal X}_\alpha(\by) &:= \mathcal X \cap \mathcal{D}(\by). \label{eq:defcalX}
\end{align}
If $\bar{\mathcal X}_\alpha(\by) \neq \emptyset$, we further define:
\begin{equation} \label{intervaldef}
\cI_\alpha(\by) := \bigg [\inf_{\bx \in \bar{\mathcal X}_\alpha(\by)} \varphi(\bx),\; \sup_{\bx \in \bar{\mathcal X}_\alpha(\by)} \varphi(\bx) \bigg ].
\end{equation}
If $\bar{\mathcal X}_\alpha(\by) = \emptyset$, let $\cI_\alpha(\by)$ be the empty interval.
\begin{theorem}[From test inversion to optimization-based intervals] 
\label{thm:interval_coverage} 
    For any $\alpha \in (0, 1)$, and for any $\bx \in \mathcal{X}$, let $\cI_\alpha(\by)$ be the interval constructed according to \eqref{intervaldef}.
    It holds that
    \begin{equation*}
    \mathbb{P}_{\by \sim P_{\bx}} \left(\varphi(\bx) \in \cI_\alpha(\by) \right) \geq 1-\alpha.
    \end{equation*}
    In other words, $\cI_\alpha(\by)$ is a valid $1-\alpha$ confidence interval for $\varphi(\bx^*)$.
\end{theorem}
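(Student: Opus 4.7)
The plan is to reduce the theorem to \Cref{lem:conf_set_coverage} via a set-containment argument. Specifically, I would establish $\cC_\alpha(\by) \subseteq \cI_\alpha(\by)$, where $\cC_\alpha(\by)$ is the test-inversion confidence set for $\mu^* = \varphi(\bx^*)$ defined in \eqref{eq:conf_set_def}. Once this containment is in hand, the guarantee $\mathbb{P}_{\by \sim P_{\bx^*}}(\mu^* \in \cC_\alpha(\by)) \geq 1 - \alpha$ furnished by \Cref{lem:conf_set_coverage} transfers immediately to $\cI_\alpha(\by)$. The hypothesis of that lemma, namely that $T_{\mu^*}$ is a level-$\alpha$ test, follows from the standing assumption that $q_\alpha(\mu)$ is a valid decision value satisfying \eqref{eq:combined_type_1_better}, since the latter specializes to the type-I error bound \eqref{eq:type1_err_c_alpha} at $\mu = \mu^*$.

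The heart of the argument is the containment. For any $\mu \in \cC_\alpha(\by)$, combining \eqref{eq:conf_set_def} with \eqref{eq:log_likelihood_ratio_full} rewrites the defining inequality as
\begin{equation*}
    \inf_{\bx \in \Phi_\mu \cap \mathcal X} -2\ell_\bx(\by) \;\leq\; q_\alpha(\mu) + \inf_{\bx' \in \mathcal X} -2\ell_{\bx'}(\by).
\end{equation*}
Assuming the left-hand infimum is attained by some $\bx_0 \in \Phi_\mu \cap \mathcal X$, the identity $\varphi(\bx_0) = \mu$ yields $-2\ell_{\bx_0}(\by) \leq q_\alpha(\varphi(\bx_0)) + \inf_{\bx' \in \mathcal X} -2\ell_{\bx'}(\by)$, which is exactly the condition defining $\mathcal D(\by)$ in \eqref{eq:defD}. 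Combined with $\bx_0 \in \mathcal X$, this gives $\bx_0 \in \bar{\mathcal X}_\alpha(\by)$, so this set is nonempty and $\mu = \varphi(\bx_0) \in \varphi(\bar{\mathcal X}_\alpha(\by)) \subseteq \cI_\alpha(\by)$.

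Putting the pieces together, on the event $\{\mu^* \in \cC_\alpha(\by)\}$ we have $\varphi(\bx^*) \in \cI_\alpha(\by)$, and by \Cref{lem:conf_set_coverage} this event has probability at least $1-\alpha$, which gives the claim. The only real technical subtlety is the attainment of the infimum in the numerator of the likelihood ratio: under mild regularity (continuity of $\ell_\bx(\by)$ in $\bx$ and closedness or coercivity on the relevant level sets) attainment is automatic. Otherwise, a standard approximation argument with a minimizing sequence $\bx_n \in \Phi_\mu \cap \mathcal X$, together with the fact that $\cI_\alpha(\by)$ is a closed interval and $\varphi(\bx_n) = \mu$ along the whole sequence, yields the same conclusion up to vanishing slack. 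I expect this attainment bookkeeping to be essentially the only place where care is needed.
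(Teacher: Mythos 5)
Your proposal is correct and follows essentially the same route as the paper's proof: both reduce to \Cref{lem:conf_set_coverage} by relating $\cC_\alpha(\by)$ to $\cI_\alpha(\by)$ through the same rewriting of the constraint $\lambda(\mu,\by)\le q_\alpha(\mu)$ into the pointwise condition defining $\mathcal D(\by)$ (the paper proves the enclosing interval of $\cC_\alpha(\by)$ actually \emph{equals} $\cI_\alpha(\by)$, whereas you only need the containment, which suffices for coverage). Your explicit treatment of the attainment of the infimum over $\Phi_\mu\cap\mathcal X$ via a minimizing sequence is a point the paper glosses over, and is a welcome addition.
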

\begin{proof}[Proof sketch]
We prove that
\begin{equation}
\bigg [\inf_{\lambda(\mu, \by) \leq q_\alpha(\mu)} \mu,\; 
\sup_{\lambda(\mu, \by) \leq q_\alpha(\mu)} \mu \bigg ] = \cI_\alpha(\by).
\end{equation}
The object on the left-hand side is defined by enclosing $\cC_\alpha(\by)$ within the smallest possible interval that contains it, and therefore, it has guaranteed coverage. 
The equality arises from the equivalence between the optimization problems under consideration.
For a complete proof, see \Cref{proof:main}.
\end{proof}

\begin{remark}
    [Comparison with the simultaneous strict bound intervals]
    Observe that the construction of $\cI_\alpha(\by)$ follows the form outlined in \eqref{simultaneous} for the simultaneous strict bound intervals. 
    However, a key distinction lies in not requiring that $\mathcal{D}(\by) \subseteq \mathcal X$ serves as a $1-\alpha$ confidence set for $\bx$.
    This relaxation will translate into shorter intervals when $q_\alpha$ is chosen appropriately.    
\end{remark}

\begin{remark}
[Handling empty constrained sets]
If $\alpha$ is chosen such that $1-\alpha$ becomes too small, the set $\bar{\mathcal{X}}_\alpha(y)$ can be empty. 
In that case, we default to the empty interval, under the interpretation that there are no parameter values that simultaneously agree with the constraint and the observed data (at a particular level $\alpha$). 
However, the actual interval produced under this circumstance does not compromise the $1-\alpha$ coverage level provided by the theorem. 
If a point estimate inside the constraint region is desired, an option is to choose the closest point from $\mathcal X$ to $\mathcal D$. 
This point specifically ensures the continuity of the interval with respect to $\alpha$ in many standard scenarios. 
Generally, an empty set $\bar{\mathcal{X}}_\alpha(\by)$ should inform one of three possibilities: either (i) an outlier event has been observed, or (ii) the initial assumption that $\bx \in \mathcal X$ is flawed, or (iii) the forward model $P_\bx$ is misspecified.
Here, the definition of an ``outlier'' is intrinsically linked to the choice of $\alpha$. 
A larger $\alpha$ will make such events more frequent, as it broadens the range of data considered as outliers.
\end{remark}

We also present a partial converse result, stating that the interval coverage implies the validity of $q_\alpha$, subject to appropriate assumptions on $\varphi$, $P$, and $\mathcal X$. 
This result will be instrumental in refuting the coverage claims of the Rust--Burrus intervals, and consequently, the \rustburrus conjecture, as discussed in \Cref{sec:rust_burrus}.

\begin{proposition}[Coverage implies validity of quantile levels]\label{thm:interval_coverage_converse}
Assume that $\mathcal X$ forms a convex cone, $\ell_\bx(\by)$ is a concave function, and $\varphi(\bx)$ is linear.
Define $\cI_\alpha(\by)$ as in \Cref{thm:interval_coverage}, for a particular choice of $q_\alpha(\mu)$. 
If $\cI_\alpha(\by)$ is a valid $1-\alpha$ confidence interval for all $\bx$, then the values of $q_\alpha(\mu)$ are valid.
\end{proposition}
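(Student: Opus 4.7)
The plan is to prove the contrapositive. I assume $q_\alpha(\mu)$ is not valid, so that by \eqref{eq:combined_type_1_better} there exists $\bx^* \in \mathcal X$ with $\mathbb{P}_{\by \sim P_{\bx^*}}(\lambda(\varphi(\bx^*),\by) > q_\alpha(\varphi(\bx^*))) > \alpha$. By the identity \eqref{eq:exactprobability}, this is exactly $\mathbb{P}(\varphi(\bx^*)\notin\cC_\alpha(\by)) > \alpha$. The naive inclusion $\cC_\alpha \subseteq \cI_\alpha$, visible from the proof of \Cref{thm:interval_coverage}, gives $\mathbb{P}(\varphi(\bx^*)\notin \cI_\alpha) \leq \mathbb{P}(\varphi(\bx^*)\notin\cC_\alpha)$, which points the wrong way. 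The main task is therefore to use the structural assumptions to turn this into an equality and conclude $\mathbb{P}(\varphi(\bx^*)\notin \cI_\alpha(\by)) > \alpha$, contradicting the coverage hypothesis on $\cI_\alpha$.

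The first technical step is to observe that $\mu \mapsto \lambda(\mu,\by)$ is convex in $\mu$ for each fixed $\by$. Under the assumptions, $-2\ell_\bx(\by)$ is convex in $\bx$, $\mathcal X$ is convex, and $\Phi_\mu = \{\bx : \varphi(\bx)=\mu\}$ is an affine slice parameterized linearly by $\mu$. Hence $g(\mu,\by) := \inf\{-2\ell_\bx(\by) : \bx \in \mathcal X \cap \Phi_\mu\}$ is convex in $\mu$ as the value function of a convex program whose linear equality constraint is perturbed by $\mu$, and $\lambda(\mu,\by) = g(\mu,\by) - \inf_{\bx \in \mathcal X}(-2\ell_\bx(\by))$ inherits this convexity.

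The second step uses this convexity to upgrade the inclusion $\cC_\alpha(\by) \subseteq \cI_\alpha(\by)$ to an equality on $\varphi(\mathcal X)$. Take any $\mu^* \in \cI_\alpha(\by)$ with $\mu^- \le \mu^* \le \mu^+$ for some $\mu^\pm \in \cC_\alpha(\by)$ attained by $\bx^\pm \in \bar{\mathcal X}_\alpha(\by)$. The segment $\bx_t := (1-t)\bx^- + t\bx^+$, with $t \in [0,1]$ chosen so that $\varphi(\bx_t) = \mu^*$, stays in $\mathcal X$ by convex coneness, and satisfies $-2\ell_{\bx_t}(\by) \leq (1-t)(-2\ell_{\bx^-}(\by)) + t(-2\ell_{\bx^+}(\by))$ by concavity of $\ell$. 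Feeding this chain of inequalities into the defining inequality of $\bar{\mathcal X}_\alpha$ forces $\bx_t \in \bar{\mathcal X}_\alpha(\by)$ and hence $\mu^* \in \cC_\alpha(\by)$, yielding $\mathbb{P}(\varphi(\bx^*)\in\cI_\alpha) = \mathbb{P}(\varphi(\bx^*)\in\cC_\alpha)$ and completing the contradiction.

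I expect the main obstacle to be precisely this matching in the last step when $q_\alpha$ depends nontrivially on $\mu$: the convex-combination bound only places $\bx_t$ inside $\bar{\mathcal X}_\alpha(\by)$ cleanly when $q_\alpha(\mu^*)$ dominates $(1-t)q_\alpha(\mu^-) + t q_\alpha(\mu^+)$, a mild concavity of $q_\alpha\circ\varphi$ along segments that is not an explicit hypothesis. I anticipate that the proof will either deduce this consistency from the ``for all $\bx \in \mathcal X$'' strength of the coverage hypothesis, or else specialize to the motivating regime where $q_\alpha$ is constant in $\mu$ (as in the Rust--Burrus construction), for which the sublevel set $\{\mu : \lambda(\mu,\by) \leq q_\alpha\}$ of the convex function $\lambda(\cdot,\by)$ is automatically an interval and the identification $\cC_\alpha = \cI_\alpha$ is immediate.
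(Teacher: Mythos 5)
Your proposal is correct and follows essentially the same route as the paper: the heart of both arguments is the convexity of $\mu \mapsto \inf_{\bx \in \Phi_\mu \cap \mathcal{X}} -2\ell_\bx(\by)$ (proved by interpolating minimizers inside the convex cone using linearity of $\varphi$ and concavity of $\ell$), which makes $\cC_\alpha(\by)$ an interval and hence equal to $\cI_\alpha(\by)$, so that coverage of $\cI_\alpha$ and validity of $q_\alpha$ coincide; your contrapositive packaging is only a cosmetic difference. The caveat you flag about $\mu$-dependent $q_\alpha(\mu)$ is on point: the paper's own proof writes the sublevel set with a constant $q_\alpha$ and only establishes the claim cleanly in that regime, exactly as you anticipated.
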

\begin{proof}[Proof sketch]
    Generally, the values of $q_\alpha(\mu)$ are valid if and only if $\cC_\alpha(\by)$ constitutes a $1-\alpha$ set.
    Since $\cI_\alpha(\by)$ is the smallest interval that contains $\cC_\alpha(\by)$, if $\cC_\alpha(\by)$ is already an interval, then the result holds. 
    The assumptions on $\mathcal X, \ell_\bx(\by)$ and $\varphi$ ensure that this is the case by the convexity of the function 
    \[
        \mu \mapsto \inf_{\substack{\varphi(\bx) = \mu \\ \bx \in \mathcal{X}}} -2\ell_\bx(\by)
    \]
    for any $\by$. 
    For a detailed proof, see \Cref{proof:main_converse}.
\end{proof}

Finally, we remark that the construction presented in this paper provides an approach to uncertainty quantification that does not rely on a specific point estimator, distinguishing it from many other UQ procedures. 
However, if one wishes to obtain a point estimator, it is worth noting that the midpoint of the interval can be justified from a decision-theoretic perspective.
This idea has been discussed in previous works by \cite{micchelli1977survey,Bajgiran2022}, among others.

\subsection{Illustrative examples}\label{subsec:examples_theorem}

To elucidate the general methodology outlined in \Cref{thm:interval_coverage}, we offer two simple illustrative examples where the LLR and its distribution are explicitly computable: a one-dimensional constrained Gaussian scenario and an unconstrained linear Gaussian case.

\myparagraph{Constrained Gaussian in one dimension}
As a tangible example, consider the following one-dimensional model:
\begin{equation} \label{eq:1d_toy_model}
    \underbrace{y= x^* + \varepsilon, \quad \varepsilon \sim \mathcal{N}(0,1)}_{\text{model}} \quad \text{with} \quad \underbrace{x^* \geq 0}_{\text{constraints}} \quad \text{and} \quad \underbrace{\varphi(x) = x}_{\text{functional}}.
\end{equation}
In this case, the distribution of the LLR is precisely known.
Hence, a confidence interval can be constructed without resorting to the techniques introduced in \Cref{sec:interval_methodology}, which are otherwise necessary when such information is not available.

The form of the hypothesis test $T_\mu$, as given in \eqref{eq:h_test_oi_full}, is as follows:
\begin{equation} \label{eq:1d_hypoth}
    H_0: x^* = \mu \quad \text{versus} \quad H_1: x^* \neq \mu \text{ and } x^* \geq 0.
\end{equation}
The LLR as defined in \eqref{eq:log_likelihood_ratio_full} for the test \eqref{eq:1d_hypoth} is given by:
\begin{align} \label{eq:1d_log_lr}
    \lambda(\mu, y)~ &= \inf_{x = \mu, x \geq 0} (y - x)^2 - \inf_{x \geq 0} (y - x)^2 \nonumber \\
    &= \begin{cases}
        (y - \mu)^2, & y \geq 0, \\
        (y - \mu)^2 - y^2, & y < 0.
    \end{cases}
\end{align}
We can also derive its distribution under the null hypothesis (i.e., when $x^* = \mu$, leading to $y = \mu + \varepsilon$) for any $\mu \in [0, \infty)$, as formalized below.
\begin{example}[Distribution of the LLR statistic for a constrained Gaussian in one dimension]
\label{lemma:1d_cdf_part1}
    For $\lambda(\mu, y)$  as defined in \eqref{eq:1d_log_lr} with $\mu \geq 0$, when $y \sim \mathcal N(\mu, 1)$ (null hypothesis),
    for all $c > 0$,
    we have
    \begin{equation*}
        \mathbb{P} (\lambda(\mu, y) \leq c ) = \begin{cases}
        \chi^2_1(c) + \frac{1}{2}, & \mu = 0 \\
        \chi^2_1(c) \cdot \bm{1} \{ c < {\mu}^2 \} + \{ \Phi (\sqrt{c}) - \Phi( {(-{\mu}^2 - c)}/(2 \mu) ) \} \cdot \bm{1} \{ c \geq {\mu}^2 \}, & \mu > 0,
        \end{cases}
    \end{equation*}
    where $\chi^2_1$ and $\Phi$ are the CDFs of a $\chi^2_1$ and a standard Gaussian, respectively.
\end{example}
\begin{proof}
   See \Cref{app:proof-lemma:1d_cdf_part1}.
\end{proof}

The expression for $\lambda(\mu, y)$, with the appropriately scaled log transformation, is equivalent to Equation (4.3) in \cite{Feldman_1998} where the Neyman confidence interval construction for the same problem is considered. 
\cite{Feldman_1998} characterizes this quantity as a likelihood ordering for determining an acceptance region. 

By virtue of the previous result and \Cref{lemma:quantile}, we can take $Q_{\mu}(1-\alpha)$, where $Q_\mu$ is the quantile of the distribution of $\lambda(\mu, y)$ when $\mu$ is fixed, as $q_\alpha(\mu)$ satisfying \eqref{eq:type1_err_c_alpha}. 
A direct computation shows
\begin{equation} \label{eq:1d_quantile_func}
q_\alpha(\mu) = Q_{\mu}(1-\alpha) = \begin{cases}
      Q_{\chi^2_1}(1-\alpha), & 1-\alpha < \chi^2_1(\mu^2), \\
        r_{\mu, \alpha}, & 1-\alpha \geq \chi^2_1(\mu^2), \\ 
\end{cases}
\end{equation}
where $r_{\mu, \alpha}$ is the unique nonnegative root of the function $x \mapsto \Phi(\sqrt{x}) - \Phi({(-\mu^2-x)}/(2\mu)) - (1-\alpha)$, which can be found using numerical methods.
Therefore, $\mathcal D = \{x: (y-x)^2 \leq q_\alpha(x) + \min_{x' \geq 0} (y-x')^2 \}$ and the final form of the confidence interval becomes:
\begin{equation*}
 \cI_\alpha(y) = \bigg [ \min_{\substack{x \in \mathcal D \\ x \geq 0}} \, x ,\; \max_{\substack{x \in \mathcal D \\ x \geq 0}} \, x \bigg].
\end{equation*}
For a numerical comparison of this interval with alternative methods, we refer the reader to \Cref{subsec:1d_numerics}.

\myparagraph{Unconstrained Gaussian linear model}
Consider the following problem setup:
\begin{equation}\label{eq:lineargaussianmodel-unconstrained}
\underbrace{\by = \bK\bx^* + \bm{\varepsilon}, \quad \bm{\varepsilon} \sim \mathcal{N}(\bm{0}, \bI_m)}_{\text{model}} \quad \text{and} \quad \underbrace{\varphi(\bx) = \bh^\tp \bx}_{\text{functional}}.
\end{equation}
Assume $\bK \in \mathbb{R}^{m \times p}$ has full column rank.
The assumption $\text{Cov}(\by) = \bI_m$ is without loss of generality as it is equivalent to assuming a known positive definite covariance for $\by$ and performing a basis change with the Cholesky factor. 
Note that this setup is the same as \eqref{eq:lineargaussianmodel} but the parameter space is not constrained, that is, $\mathcal X = \mathbb{R}^p$, and the forward model $\bK$ is assumed to be full rank. 

Using the framework established in \Cref{subsec:LRT}, our aim is to invert the following family of hypothesis tests:
\begin{equation} \label{eq:hyp_unconstrained}
    H_0: \bh^\tp \bx^* = \mu \quad \text{versus} \quad H_1: \bh^\tp \bx^* \neq \mu.
\end{equation}
The LLR as defined in \eqref{eq:log_likelihood_ratio_full} for the test \eqref{eq:hyp_unconstrained} takes the form:
\begin{equation} \label{eq:llr_full_rank_unconstrain}
    \lambda(\mu, \by) ~:= \min_{\bx\,:\, \bh^\tp \bx = \mu} \lVert \by - \bK \bx \rVert_2^2 - \min_{\bx} \lVert \by - \bK \bx \rVert_2^2.
\end{equation}
In this particular scenario, the LLR admits a closed-form expression and has a straightforward distribution, as formalized below:
\begin{example} 
[Distribution of the LLR statistic for the unconstrained Gaussian linear model]
\label{lemma:unconstrchi2}
$\lambda(\mu, \by)$ for the unconstrained full column rank Gaussian linear model \eqref{eq:llr_full_rank_unconstrain} can be expressed in closed form as 
\begin{equation} \label{eq:test_stat_eq_unconstr}
\lambda(\mu, \by) = \frac{(\bh^\tp (\bK^\tp \bK)^{-1} \bK^\tp \by - \mu)^2}{\bh^\tp (\bK^\tp \bK)^{-1} \bh}.
\end{equation}
Furthermore, for any $\bx^*$, whenever $\by \sim \mathcal{N}(\bK\bx^*, \bI_m)$, $\lambda(\bh^\tp\bx^*, \by)$ is distributed as a chi-squared distribution with $1$ degree of freedom.
\end{example}
\begin{proof}
    See \Cref{proof:unconstrchi2}.
\end{proof}

Using the above results, we can set $q_{\alpha}(\mu) = Q_{\chi^2_1}(1-\alpha)$ for all values of $\mu$. 
Here, $Q_{\chi^2_1}$ represents the quantile function of a chi-squared distribution with $1$ degree of freedom.  
Consequently, we can express the interval in \eqref{intervaldef} as:
\begin{equation} \label{int:unconstrained_full_rank_opt}
    \cI_\alpha(\by) = \bigg[\min_{\bx \in \mathcal{D}(\by)} \bh^\tp\bx,\; \max_{\bx \in \mathcal{D}(\by)} \bh^\tp\bx \bigg],
\end{equation}
where we define $\mathcal{D}(\by) := \{ \bx \colon \lVert \by - \bK \bx \rVert_2^2 \leq  Q_{\chi^2_1}(1-\alpha) + \min_{\bx'} \lVert \by - \bK \bx' \rVert_2^2 \}$.
Similarly, let us define $z_{\alpha} = \Phi^{-1}(1 - \alpha)$, where $\Phi$ is the cumulative distribution function of the standard normal distribution. 
Using the equivalence $z^2_{\alpha/2} = Q_{\chi^2_1}(1-\alpha)$, we can rewrite the expression in terms of the standard normal.
Moreover, as shown in Appendix A of \cite{patil}, the endpoints of the above interval can be calculated in closed form and are given by:
\begin{equation} \label{int:unconstrained_full_rank}
    \cI_\alpha(\by) = \bigg[\bh^\tp \hat{\bx} - z_{\alpha / 2} \sqrt{\bh^\tp \left(\bK^\tp \bK \right)^{-1} \bh},\; \bh^\tp \hat{\bx} + z_{\alpha / 2} \sqrt{\bh^\tp \left(\bK^\tp \bK \right)^{-1} \bh} \bigg],
\end{equation}
where we define the least-squares estimator $\hat{\bx}= (\bK^\tp \bK)^{-1} \bK^\tp \by$. 
This interval is equivalent to the one derived from observing that $\hat{\bx} \sim \mathcal{N}(\bx^*, (\bK^\tp \bK)^{-1})$. 
Therefore, we have $\bh^\tp\hat{\bx} \sim \mathcal{N}(\bh^\tp\bx^*, \bh^\tp (\bK^\tp \bK)^{-1} \bh)$. The interval in \eqref{int:unconstrained_full_rank} is thus a standard construction of a Gaussian $1 - \alpha$ confidence interval. 
Our construction therefore coincides with the classical interval in this case where a guaranteed-coverage interval can be obtained with standard manipulations; however, our framework remains valid in constrained, rank-deficient, non-Gaussian and/or nonlinear problems where few alternative approaches are available.

\section{General interval construction methodology}
\label{sec:interval_methodology}

In this section, we outline the core practical methodology for constructing intervals derived from \Cref{thm:interval_coverage}. 
To summarize the preceding, \Cref{lem:conf_set_coverage} asserts that if we know $q_\alpha(\mu)$ satisfying \eqref{eq:combined_type_1_better}, we can invert the hypothesis test with a composite null hypothesis defined in \eqref{eq:h_test_oi_full} to yield a valid $1 - \alpha$ confidence interval.
\Cref{lemma:quantile} poses two optimization problems that, if solved, yield valid decision values. 
The optimization problems in \Cref{lemma:quantile} give rise to two approaches of increasing complexity: $(i)$ finding a single $q_\alpha$ that is valid for any $\mu$ and $(ii)$ finding valid $q_\alpha(\mu)$ dependent on $\mu \in \mathcal{\varphi(\mathcal X)}$. 
While approach $(ii)$ can lead to tighter intervals, it is usually at the cost of more complex theoretical analysis and computations, including the computational complexity of solving the optimization problems in \eqref{intervaldef}. 
In particular, when we reinterpret previously proposed optimization-based methods in \Cref{sec:rust_burrus}, we will observe that these previously proposed methods are of type $(i)$, which this work expands to accommodate type $(ii)$ generalizations. 
In this section, we briefly analyze the hardness of the optimization problems \eqref{eq:QuantileOpt1} and \eqref{eq:QuantileOpt2} by connecting them to the chance-constrained optimization literature. 
In case solving these problems is impractical, in \Cref{subsec:stochastic_dominance}, we describe using stochastic dominance as a theoretical tool that can be used to create and analyze provable upper bounds to the optimization problems. 
Stochastic dominance will also be used in \Cref{sec:rust_burrus} as the main technique to disprove coverage of the previously proposed Rust--Burrus intervals. This section is summarized in a meta-algorithm, detailed in \Cref{subsec:algorithm}.

\subsection[Computational ways to compute quantile levels via optimization]{Maximum quantile problems as chance-constrained optimization}\label{subsec:CCO}

\Cref{lemma:quantile} presents optimization problems for finding the maximum quantiles, which are crucial for the proposed hypothesis test inversion procedure. 
We show that these problems, of the form $\sup_{\bx \in  \Phi_\mu \cap \mathcal{X}} Q_{F_\bx}(1-\alpha)$ and $\sup_{\bx \in \mathcal{X}} Q_{F_\bx}(1-\alpha)$, can be formulated as chance-constrained optimization (CCO) problems (see \cite{Geng2019} for a review of theory and applications of CCO).
\begin{lemma}[Chance constrained characterization of the max quantile problems]\label{lemma:cco} 
Let $\mathcal S \subseteq \mathcal X$. 
Then the max quantile optimization problem $\sup_{\bx \in \mathcal{S}} Q_{F_\bx}(1-\alpha)$ can be equivalently written as the chance constrained optimization problem:
\begin{equation}\label{eq:cco_problem}
    \begin{aligned}
    \sup_{q, \bx} \quad & q \\
    \st \quad & \bx \in \mathcal{S} \\
    \quad & q \in \mathbb{R} \\
    \quad & \mathbb{P}_{u\sim \mathcal{U}([0,1])}(\mathcal{F}(\bx,u) \leq q) \leq 1-\alpha  \\ 
    \end{aligned}    
\end{equation}
where $\mathcal{F}(\bx, u) = F^{-1}_\bx(u)$, with $F^{-1}_\bx$ being the (possibly generalized) inverse CDF of $F_\bx$
\end{lemma}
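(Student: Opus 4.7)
The plan is to establish the equivalence in two steps: first collapse the chance constraint in the CCO formulation to a deterministic condition on $F_\bx$, and then recognize the resulting inner supremum over $q$ as the $(1-\alpha)$-quantile of $F_\bx$.

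For the first step, I would invoke the standard Galois-type identity for the generalized inverse CDF, namely $F_\bx^{-1}(u) \leq q$ if and only if $u \leq F_\bx(q)$, where $F_\bx^{-1}(u) := \inf\{x : F_\bx(x) \geq u\}$. Applying this to $u \sim \mathcal{U}([0,1])$ gives
\[
\mathbb{P}_u\bigl(\mathcal{F}(\bx, u) \leq q\bigr) \;=\; \mathbb{P}_u\bigl(u \leq F_\bx(q)\bigr) \;=\; F_\bx(q),
\]
so the chance constraint in \eqref{eq:cco_problem} reduces to the deterministic inequality $F_\bx(q) \leq 1-\alpha$. This collapses \eqref{eq:cco_problem} to $\sup\{q : \bx \in \mathcal{S},\; F_\bx(q) \leq 1-\alpha\}$.

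For the second step, I would split the joint supremum as $\sup_{\bx \in \mathcal{S}} \sup\{q \in \mathbb{R} : F_\bx(q) \leq 1-\alpha\}$ and argue that the inner sup equals $Q_{F_\bx}(1-\alpha)$. By monotonicity and right-continuity of $F_\bx$, any $q$ strictly below $Q_{F_\bx}(1-\alpha) := \inf\{q' : F_\bx(q') \geq 1-\alpha\}$ satisfies $F_\bx(q) < 1-\alpha$ and is feasible, while any $q$ strictly above forces $F_\bx(q) \geq 1-\alpha$ and is infeasible in the relevant cases; in all cases the supremum of the feasible set coincides with $Q_{F_\bx}(1-\alpha)$, even when it is not attained. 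Composing with $\sup_{\bx \in \mathcal{S}}$ then gives the claimed identity.

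The main obstacle is the convention-dependent subtlety at atoms or plateau sets of $F_\bx$. At a jump point where $F_\bx$ crosses $1-\alpha$ discontinuously the supremum over feasible $q$ is not attained (which is precisely why \eqref{eq:cco_problem} uses $\sup$ rather than $\max$), while on a plateau of $F_\bx$ at height exactly $1-\alpha$ different conventions for the generalized inverse can in principle disagree. I would handle both cases uniformly by anchoring all definitions to the Galois identity above, which forces $F_\bx^{-1}$ and $Q_{F_\bx}$ to be mutually consistent. For the LLR distributions arising in this paper (e.g.\ those in \Cref{lemma:1d_cdf_part1} and \Cref{lemma:unconstrchi2}), $F_\bx$ has at most isolated atoms, so these boundary cases are mild and the argument reduces to the clean continuous case.
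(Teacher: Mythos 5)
Your proof is correct and follows essentially the same route as the paper's one-line argument: both reduce the chance constraint to $F_\bx(q)\le 1-\alpha$ via the inverse-transform representation $\mathcal{F}(\bx,u)\overset{d}{\sim}F_\bx$ for $u\sim\mathcal{U}([0,1])$, and then identify the supremum over feasible $q$ with the $(1-\alpha)$-quantile (the paper simply \emph{defines} $Q_{F_\bx}(1-\alpha)$ as the maximal $q$ with $\mathbb{P}(\lambda\le q)\le 1-\alpha$, which makes your second step tautological and sidesteps the plateau ambiguity you flag). Your explicit treatment of the atom/plateau edge cases is a welcome refinement but does not change the substance of the argument.
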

\begin{proof}
    By using the definition of $(1-\alpha)$-quantile of $X$ as the maximum $q$ such that $\mathbb{P}(X\leq q) \leq 1-\alpha$, and $\lambda \sim F_\bx \overset{d}{=} \mathcal{F}(\bx, u \sim \mathcal{U}([0,1])$
\end{proof}

Note that \Cref{lemma:cco} applies to both \eqref{eq:QuantileOpt1} and \eqref{eq:QuantileOpt2} by choosing appropriate $\mathcal{S}$. In general, CCO problems are known to be strongly NP-hard \cite{Geng2019}, even with convexity assumptions for $\mathcal{F}$. 
Although various algorithms exist for general chance-constrained optimization, we leave the development of an algorithm specific to this problem and comparison with the aforementioned algorithms for future work. 
In our numerical examples (see \Cref{sec:numerical-examples}), we solve these problems using gradient-free optimizers that do not exploit the chance-constrained structure but instead see the quantile function as a noisy black-box function to be optimized, with evaluations performed by estimating quantiles from large amount of samples. 
In higher dimensional scenarios, more advanced techniques tailored to the chance-constrained structure might be required. 
One can also write optimization problem \eqref{eq:cco_problem} and the interval optimization problem \eqref{intervaldef} jointly as one chance-constrained optimization problem; see \Cref{sec:joint-optimatization}.

\subsection[Analytical ways to obtain quantile levels via stochastic dominance]{Analytical ways to obtain quantile levels via stochastic dominance}
\label{subsec:stochastic_dominance}

In this subsection, we develop an analytical tool to find valid $q_\alpha$ that allows for a straightforward evaluation for any confidence level $1-\alpha \in (0, 1)$. \
We first consider the case where we aim to choose a valid $q_\alpha$ for all $\mu$. 
We propose taking $q_\alpha = Q_X(1-\alpha)$, where $Q_X$ is the quantile function of a random variable $X$ with a known, easy-to-compute distribution. 
We establish that for the resulting confidence interval to maintain a $1 - \alpha$ coverage guarantee for any $\alpha$, $X$ must stochastically dominate the random variable with distribution $F_{\bx^*}$, i.e., $\lambda(\mu^*, \by)$ where $\by$ is a random variable with distribution $P_{\bx^*}$. 
This is denoted as $X \succeq \lambda(\mu^*, \by)$ or, with slight abuse of notation, as $X \succeq F_{\bx^*}$.
Following the classical definition of stochastic dominance for real-valued random variables (see, e.g., \cite{shaked2007stochastic}), we say that $X \succeq Y$ if and only if $\mathbb{P}(X \geq z) \geq \mathbb{P}(Y \geq z)$ \footnote{One can equivalently define stochastic dominance with strict inequalities $X > z$ and $Y > z$, see \cite{shaked2007stochastic}} for all $z \in \mathbb{R}$

\begin{lemma}[Valid quantile level via stochastic dominance]
\label{lemma:SD}
$Q_X(1-\alpha)$ serves as a valid (in the sense of \eqref{eq:combined_type_1_better}) choice for $q_\alpha$ for all $\alpha$ if and only if $X \succeq \lambda(\mu^*, \by)$, where $\by \sim P_{\bx^*}$.
\end{lemma}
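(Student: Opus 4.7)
The plan is to recast both sides of the equivalence as statements about cumulative distribution functions, after which each direction reduces to a short manipulation with the generalized inverse. Let $F_X$ denote the CDF of $X$ and let $G$ denote the CDF of $\lambda(\mu^*, \by)$ when $\by \sim P_{\bx^*}$. The validity requirement in \eqref{eq:combined_type_1_better} for the choice $q_\alpha = Q_X(1-\alpha)$, namely $\mathbb{P}(\lambda(\mu^*,\by) > Q_X(1-\alpha)) \leq \alpha$, is equivalent to $G(Q_X(1-\alpha)) \geq 1-\alpha$, while $X \succeq \lambda(\mu^*, \by)$ is equivalent to $F_X(z) \leq G(z)$ for every $z \in \mathbb{R}$.

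For the ``if'' direction, assume stochastic dominance. Then for every $\alpha \in (0,1)$,
$$
G(Q_X(1-\alpha)) \;\geq\; F_X(Q_X(1-\alpha)) \;\geq\; 1-\alpha,
$$
where the first inequality is stochastic dominance evaluated at $z = Q_X(1-\alpha)$, and the second is the standard property $F_X(Q_X(p)) \geq p$ of the generalized inverse of a CDF.

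For the ``only if'' direction, assume $G(Q_X(1-\alpha)) \geq 1-\alpha$ for every $\alpha \in (0,1)$, and fix an arbitrary $z \in \mathbb{R}$ at which $F_X(z) \in (0,1)$. Set $\alpha := 1 - F_X(z)$. The definition of the generalized inverse as an infimum yields $Q_X(F_X(z)) \leq z$, i.e., $Q_X(1-\alpha) \leq z$, so monotonicity of $G$ gives
$$
G(z) \;\geq\; G(Q_X(1-\alpha)) \;\geq\; 1-\alpha \;=\; F_X(z).
$$
The boundary cases $F_X(z) \in \{0,1\}$ are treated directly: $F_X(z) = 0$ makes the inequality trivial, and $F_X(z) = 1$ follows by letting $\alpha \downarrow 0$ and using that $Q_X(1-\alpha) \leq z$ for all sufficiently small $\alpha$.

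I do not anticipate a serious obstacle; the only subtlety is the careful handling of possible discontinuities and atoms of $F_X$, since $F_X \circ Q_X$ need not be the identity. Using the generalized inverse throughout sidesteps this issue, and the rest of the argument is a short chain of CDF inequalities.
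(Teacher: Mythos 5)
Your proof is correct and follows essentially the same route as the paper's: both sides are reduced to CDF inequalities and compared at the quantile points $Q_X(1-\alpha)$. If anything, your treatment is more careful than the paper's chain of equivalences, which implicitly assumes $\mathbb{P}(X \geq Q_X(1-\alpha)) = \alpha$ and skips the monotonicity step needed to pass from the quantile points back to arbitrary thresholds $z$; your use of the generalized inverse closes both of those small gaps.
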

\begin{proof}
    See \Cref{sec:proof-subsec:stochastic_dominance}.
\end{proof}

\begin{remark}
[Partial validity of quantile levels]
\label{remark:sd}
If $X$ does not stochastically dominate $\lambda(\mu^*, \by)$, a valid $q_\alpha$ can still be identified for specific $\alpha$ levels, provided that certain conditions are met. 
Specifically, $z$ can serve as a valid $q_\alpha$ where $\alpha = 1 - F_X(z)$ and $F_X$ being the cumulative distribution function of $X$, if and only if $\mathbb{P}(X \leq z) \leq \mathbb{P}(Y \leq z)$ for some value of $z$.
\end{remark}

\begin{remark}
    [Support restriction] 
    Candidates for $X$ can be restricted to the range $[0, \infty)$ without loss of generality, as $\lambda(\mu^*, \by)$ is supported on this range by moving the mass a candidate $X$ might have in $(-\infty, 0)$ to $0$.
\end{remark}

An economic interpretation of our result is that agents with nondecreasing utility functions would prefer a reward drawn from $X$ over one from $\lambda(\mu^*, \by)$. 
In practical scenarios where the true parameter $\bx^*$ is unknown, it is required to establish stochastic dominance for the entire family of distributions $F_{\bx}$, where $\bx \in \mathcal X$.

Although all stochastically dominant distributions provide correct coverage when used to obtain $q_\alpha$, a larger stochastic dominance gap provides more conservative bounds. 
Furthermore, if $X_1, X_2$ both stochastically dominate the family $F_{\bx}$ for all $\bx \in \mathcal X$, we can take the pointwise minimum $q_\alpha = \min\{ Q_{X_1}(1-\alpha), Q_{X_2}(1-\alpha) \}$ which will be no worse than using either $X_1$~or~$X_2$.

The perspective of stochastic dominance also enables the use of coupling arguments to identify stochastically dominating distributions. 
For instance, one approach to find stochastically dominating distributions to a given $F_{\bx}$ is finding a function $g(\varphi(\bx), \by)$ such that for all $z$,
\begin{equation*}
\mathbb P(g(\varphi(\bx), \by) \geq z) \geq \mathbb {P}(\lambda(\varphi(\bx), \by) \geq z),
\end{equation*}
where the randomness is from $\by \sim P_{\bx}$.
A particular case is that of nonrandom bounds. 
If $g(\varphi(\bx), \by) \geq \lambda(\varphi(\bx), \by)$ almost surely in $\by$ (as opposed to when $\by \sim P_{\bx}$), then this implies a coupling of random variables once $\by$ is sampled that implies stochastic dominance (see e.g. Theorem~4.2.3 in \cite{ProbabilityBook}).

This technique can be generalized to find $q_\alpha(\mu)$. 
Instead of finding a stochastic dominant variable $X$ such that $X \succeq F_\bx$ for all $\bx \in \mathcal{X}$, we aim to find a distribution $X_\mu$ for each $\mu$, such that $X_\mu \succeq F_\bx$ for all $\bx \in \Phi_\mu \cap \mathcal{X}$, and then set $q_\alpha(\mu) = Q_{X_\mu}(1-\alpha)$. 
This ensures that $Q_{X_{\mu^*}} \succeq F_{\bx^*}$, providing the desired coverage guarantees.

\begin{figure}[!t]
    \centering
    \begin{subfigure}[b]{0.45\textwidth}
        \includegraphics[width=\textwidth]{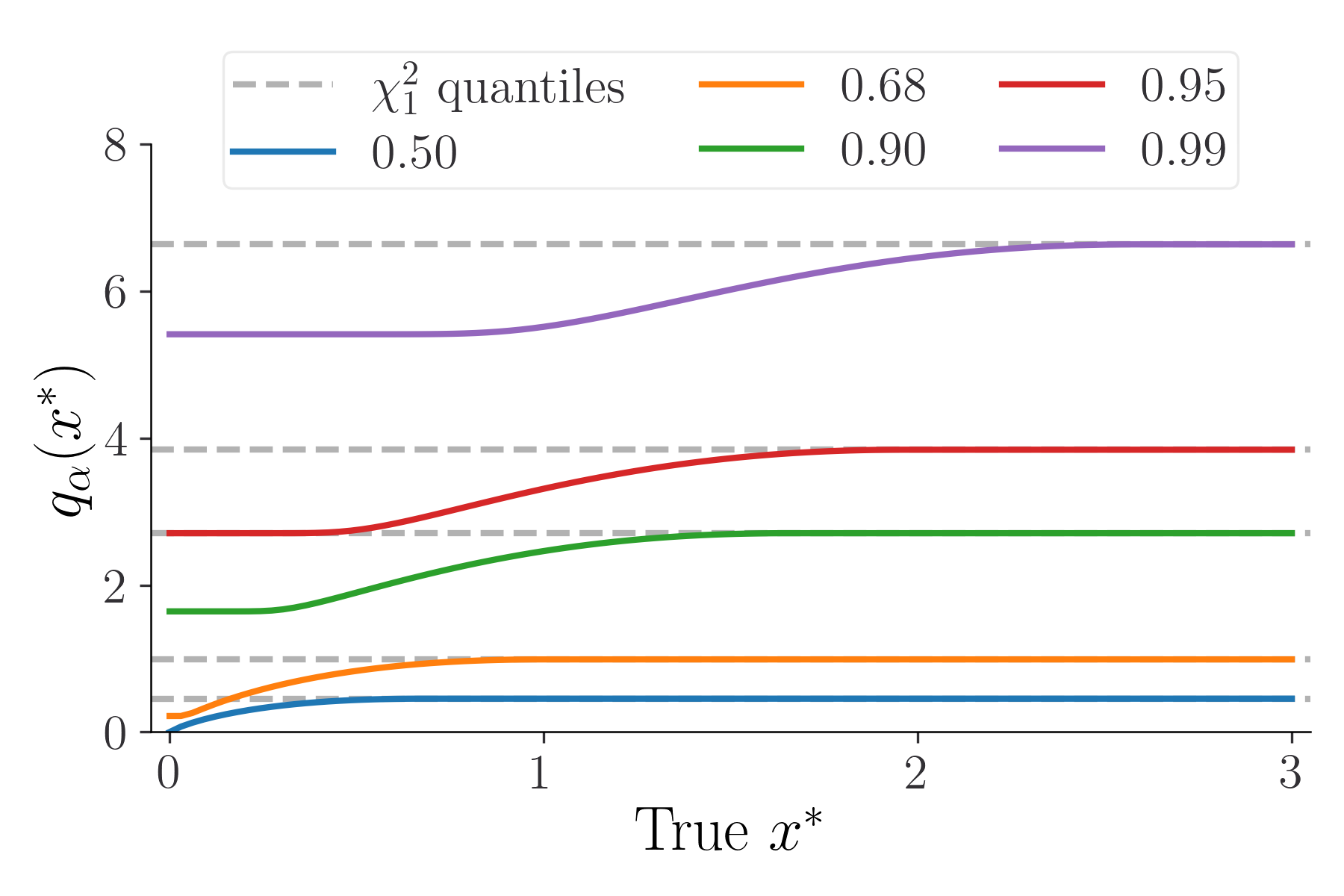}
        \caption{}
        \label{fig:1d_quantiles}
    \end{subfigure}
    \begin{subfigure}[b]{0.45\textwidth}
        \includegraphics[width=0.95\textwidth]{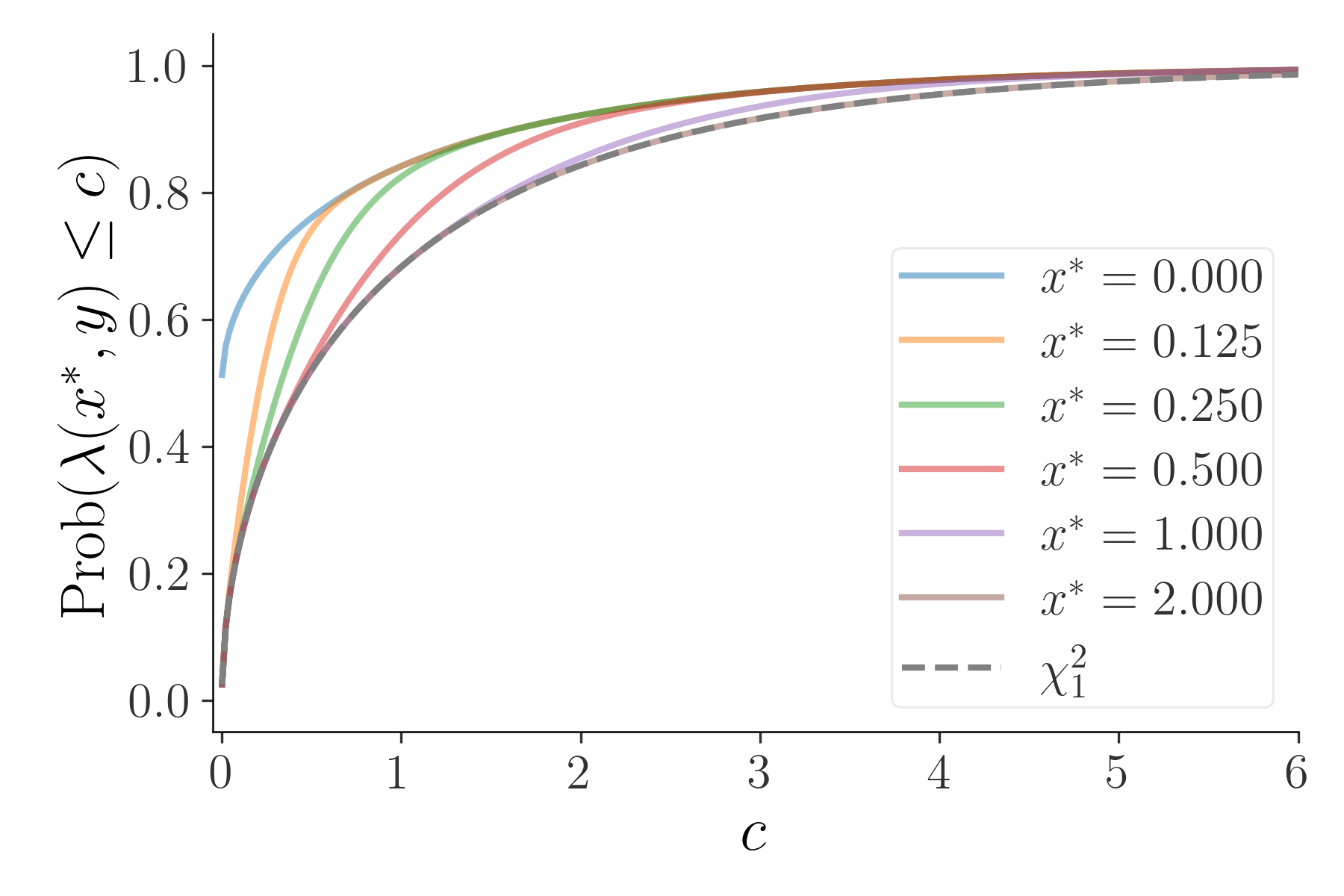}
        \caption{}
        \label{fig:1d_cdfs}
    \end{subfigure}
    \caption{
    Comparison of quantile functions and CDFs for LLRs with different true parameter values.
    The \textbf{left} panel provides the true values of the quantile function as a function of $x^*$ across different confidence levels. 
    As proven in \Cref{lemma:1d_cdf}, the quantile of $\chi^2_1$ is greater than the true quantile for all $x^*$ and all levels. 
    The \textbf{right} panel shows the CDFs for LLRs under different values of the true parameters, $x^*$. 
    From \Cref{lemma:1d_cdf}, as the true parameter increases, the CDF is increasingly dominated by its $\chi^2_1$ component, so it follows that as $x^*$ increases, the CDF approaches the $\chi^2_1$ CDF. 
    This figure also provides a visual explanation of why using the true quantile or the true quantile function to compute the interval in \eqref{intervaldef} produces shorter intervals compared to those computed with the $\chi^2_1$ quantile.
    }
    \label{fig:quantile-comparison}
\end{figure}

As an illustration, we revisit the one-dimensional constrained example discussed in \Cref{subsec:examples_theorem}. 
We consider the model $y = x^* + \varepsilon$, where $\varepsilon \sim \mathcal{N}(0,1)$, $x^* \geq 0$, and $\varphi(x) = x$. 
We recall that we have $\lambda(\mu, y) = (y-\mu)^2 - \bm{1}(y < 0)y^2$ and an analytical solution for the quantile of the distribution of $\lambda(\mu, y)$ for every $\mu$, which we can use as valid $q_\alpha$. 
We extend the results in \Cref{lemma:1d_cdf_part1} below to prove that the distribution of $\lambda(\mu, y)$ is stochastically dominated by a $\chi^2_1$.
See \Cref{fig:quantile-comparison} for an illustration.

\begin{example}
    [Stochastic dominance for LLR for constrained Gaussian in one dimension]
     \label{lemma:1d_cdf}
    For the LLR $\lambda(\mu, y)$, when $y \sim \mathcal{N}(\mu, 1)$ under the null hypothesis, we have that, for $Z \sim \chi^2_1$, $Z \succeq \lambda(\mu, y)$ for all $\mu \geq 0$.
\end{example}
\begin{proof}
   See \Cref{app:constrained_1d_gaussian}.
\end{proof}

For this example, given the stochastic dominance result, we can define $1-\alpha$ confidence intervals using $\chi^2_{1, 1-\alpha}$ instead of using $q_\alpha(\mu)$. 
This produces larger intervals than using the true quantile, but the true quantile in the closed form will generally be unavailable in more complex examples, while the presented stochastic dominance tools can still be used. 
The intervals using the $\chi^2_1$ quantile are:
\begin{equation}\label{opt4new1d_quantile_sec3}
\cI_\alpha(y) 
:=~
\begin{aligned}
\min_{x}/\max_{x} \quad & x \\
\st \quad & x \geq 0\\
& (x-y)^2 \leq \chi^2_{1, 1-\alpha} + \min_{x' \geq 0} ~ (x'-y)^2.
\end{aligned}
\end{equation}

\subsection{General confidence interval construction}
\label{subsec:algorithm}

In this section, we present our meta-algorithm that uses the methodologies described in the preceding sections. 
The goal of this meta-algorithm is to construct a $1-\alpha$ confidence interval for a given quantity of interest $\varphi(\bx^*)$. 
The algorithmic steps are outlined in \Cref{alg:metaalgo}.

\begin{algorithm}[!ht]
\caption{Meta-algorithm for confidence interval construction}
\label{alg:metaalgo}
\begin{algorithmic}[1]
\REQUIRE Observed data $\by$, log-likelihood model $\ell_\bx(\by)$, quantity of interest functional $\varphi$, constraint set $\mathcal X$, miscoverage level $\alpha$.
\vspace{0.5em}
\STATE \textbf{Test statistic}: Write down the LLR test statistic
\begin{equation}
\lambda(\mu, \by) ~= \underset{\bx \in \Phi_\mu \cap \mathcal{X}}{\text{inf}} \; -2\ell_{\bx}(\by)- \underset{\bx \in \mathcal{X}}{\text{inf}} \; -2 \ell_{\bx}(\by).
\end{equation}
\STATE \textbf{Distribution control}: Control $F_{\bx}$, the distribution of $\lambda(\varphi(\bx), \by)$ where $\by \sim P_{\bx}$, for all $\bx \in \mathcal X$, by either:
\begin{enumerate}[leftmargin=7mm,label=\Alph*.]
    \itemsep0em 
    \item \emph{Explicit solution}: 
    Obtain $F_\bx$ explicitly, and let $q_\alpha(\mu) := \sup_{\bx \in \Phi_\mu \cap \mathcal{X}} Q_{F_\bx}(1-\alpha)$.
    \item \emph{Computational way to directly find valid $q_\alpha$} (\Cref{subsec:CCO}): Solve $\sup_{\bx \in \Phi_\mu \cap \mathcal{X}} Q_{F_\bx}(1-\alpha)$ (to set $q_\alpha(\mu)$) or $\sup_{\bx \in \mathcal{X}} Q_{F_\bx}(1-\alpha)$ (to set $q_\alpha$) numerically.
    \item \emph{Analytical way using stochastic dominance} (\Cref{subsec:stochastic_dominance}): 
    Construct a distribution $X$ that stochastically dominates $F_{\bx}$ for all $\bx \in \mathcal X$, and let $q_\alpha := Q_X(1-\alpha)$, or construct distributions $X_\mu$ that stochastically dominate $F_{\bx}$ for all $\bx \in \Phi_\mu \cap \mathcal{X}$ and let $q_\alpha(\mu) := Q_{X_\mu}(1-\alpha)$.
\end{enumerate}
\STATE \textbf{Confidence interval calculation}: Obtain the confidence intervals by solving the pair of optimization problems that is easier in the particular case:
\begin{itemize}[leftmargin=7mm]
    \item[I.]
    Parameter space formulation:
            \begin{equation}\label{optA}
            \begin{aligned}
            \min_{\bx}/\max_{\bx} \quad & \varphi(\bx) \\
            \st \quad & \bx \in \mathcal X\\
            \quad & -2\ell_{\bx}(\by) \leq q_\alpha(\varphi(\bx))+ \inf_{\bx' \in \mathcal X} -2\ell_{\bx'}(\by). 
            \end{aligned}
            \end{equation}
    \item[II.]
    Functional space formulation:
            \begin{equation}\label{optB}
            \begin{aligned}
            \min_\mu/\max_{\mu} \quad &\mu \\
            \st \quad & \mu \in \varphi(\mathcal{X}) \subseteq \mathbb{R} \\ \quad & \underset{\bx \in \Phi_\mu \cap \mathcal{X}} {\text{inf}} \; -2\ell_{\bx}(\by)- \underset{\bx \in \mathcal{X}}{\text{inf}} \; -2 \ell_{\bx}(\by)\leq q_\alpha(\mu).
            \end{aligned}
            \end{equation}
\end{itemize}

\ENSURE Confidence interval with coverage $1-\alpha$.
\end{algorithmic}
\end{algorithm}

It is worth noting that the optimization problems defined in \eqref{optA} and \eqref{optB} may not always be convex or straightforward to solve. 
However, their dual formulations can be constructed, offering provably valid confidence intervals for any feasible dual solution \cite{stark1992inference}.
We defer the exploration of specialized optimization techniques specifically tailored to solve \eqref{optA} and \eqref{optB} to future work.

\section{Refuting the \rustburrus conjecture}\label{sec:rust_burrus}

As discussed in \Cref{sec:intro}, the family of constrained problems that has received the most attention is the positivity-constrained version of the problem as described in \Cref{subsec:examples_theorem}. 
To recap, the model is defined as follows:
\begin{equation}\label{eq:BurrusConjectureModel} 
\by \sim \mathcal{N}(\bK \bx^*, \bI_m) \quad \text{with} \quad \mathcal{X} = \{\bx: \bx \geq \bm{0}\} \quad \text{and} \quad \varphi(\bx) = \bh^\tp \bx.
\end{equation}
Here $\bK \in \mathbb{R}^{m \times p}$ is the forward linear operator.
We again emphasize here that $\bK$ need not have full column rank, so we can for example have $p > m$.
It was initially conjectured in \cite{burrus1965utilization,rust_burrus} that a valid $1-\alpha$ confidence interval could be obtained as
\begin{equation}\label{opt4burrus}
\begin{aligned}
\min_{\bx}/\max_{\bx} \quad & \bh^\tp\bx \\
\st \quad & \Vert \by - \bK\bx \Vert_2^2 \leq \psi^2_\alpha\\
  \quad & \bx \geq \bm{0}.
\end{aligned}
\end{equation}
Here, $\psi^2_\alpha = z_{\alpha/2}^2 + s^2(\by)$, with $z_{\alpha/2}$ being the previously defined standard Gaussian quantile, and $s^2(\by)$ is defined as the optimal value of
\begin{equation*}
\begin{aligned}
\min_{\bx} \quad & \Vert \by - \bK\bx\Vert^2_2 \\ 
\st \quad & \bx \geq \bm{0}.
\end{aligned}
\end{equation*}
Although initially believed to be proved in \cite{rust1994confidence}, an error in the proof was later identified in \cite{tenorio2007confidence}, along with a counterexample. 
However, we demonstrate that this counterexample actually satisfies the conjecture, leaving the conjecture unresolved until now prior to our work, to the best of our knowledge.

The main result of this section is the construction of a new valid counterexample using the test inversion perspective developed in \Cref{sec:data_gen_test_def_inv_arg} and the stochastic dominance approach of \Cref{subsec:stochastic_dominance}, disproving the conjecture. 

\begin{theorem}
[Refutation of the \rustburrus conjecture]
\label{thm:burrus_false}
    The \rustburrus conjecture is false in general. 
    The two-dimensional example previously proposed of a particular instance of \eqref{eq:BurrusConjectureModel} in \cite{tenorio2007confidence},
    \begin{equation*}
    \bK = \bI_2 \quad \text{and} \quad \bh = (1,-1)^\tp \quad \text{with} \quad \bx^* = (a, a)^\tp \text{ such that } a \geq 0,
    \end{equation*}
    does not constitute a valid counterexample to the \rustburrus conjecture. 
    However, the following constitutes a valid counterexample for the \rustburrus conjecture:
  \begin{equation*}
    \bK = \bI_3 \quad \text{and} \quad \bh = (1,1,-1)^\tp \quad \text{with} \quad \bx^* = (0,0,1)^\tp.
    \end{equation*}
\end{theorem}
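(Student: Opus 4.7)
The strategy rests on \Cref{thm:interval_coverage_converse}. The Rust--Burrus setup satisfies its hypotheses ($\mathcal X = \{\bx \ge \bm 0\}$ is a convex cone, $\ell_\bx(\by) = -\tfrac12 \|\by - \bK\bx\|^2$ is concave, $\varphi$ is linear), so the Rust--Burrus interval at $\bx^*$ has the advertised coverage for every $\alpha$ if and only if the LLR $\lambda(\bh^\tp \bx^*, \by)$ under $\by \sim \mathcal N(\bK\bx^*, \bI_m)$ is stochastically dominated by $\chi_1^2$. The plan is therefore to prove this dominance for the Tenorio setting, and to refute it in the proposed 3D setting.

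For Part 1 (Tenorio), I would compute $\lambda(0,\by)$ in closed form when $\bK = \bI_2$, $\bh = (1,-1)^\tp$, $\bx^* = (a, a)^\tp$. The unconstrained minimizer over $\bx \geq \bm 0$ is $\by_+$, giving unconstrained value $y_1^2 \bm 1\{y_1<0\} + y_2^2 \bm 1\{y_2<0\}$, and the constrained problem on $x_1 = x_2 = x \geq 0$ is a one-variable quadratic with minimizer $\max\{(y_1+y_2)/2, 0\}$. Partitioning $\mathbb R^2$ by the signs of $y_1$, $y_2$, and $y_1 + y_2$ gives six regions in which one checks the pointwise inequality $\lambda(0,\by) \leq (y_1 - y_2)^2/2$. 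Since $(y_1 - y_2)/\sqrt 2 \sim \mathcal N(0,1)$ under $\bx^* = (a, a)$, this almost-sure coupling implies $\lambda \preceq \chi_1^2$ by the argument of \Cref{subsec:stochastic_dominance}, so the conjecture actually holds here and the putative counterexample is invalid.

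For Part 2 (3D counterexample) with $\bK = \bI_3$, $\bh = (1,1,-1)^\tp$, $\bx^* = (0, 0, 1)^\tp$, I would use KKT analysis of the QP $\min_\bx \|\by - \bx\|^2$ subject to $x_1 + x_2 - x_3 = -1$, $\bx \geq \bm 0$. Feasibility forces $x_3 > 0$, so the only possible positivity-active sets are $\emptyset, \{x_1\}, \{x_2\}, \{x_1, x_2\}$, giving a polyhedral partition of $\by$-space with an explicit quadratic formula for $\lambda(-1,\by)$ on each piece. On the fully slack piece $\lambda = (y_1 + y_2 - y_3 + 1)^2/3$, exactly $\chi_1^2$ under $\bx^*$; on the $\{x_1, x_2\}$-active piece $\lambda = y_1^2 + y_2^2 + (y_3 - 1)^2 = \varepsilon_1^2 + \varepsilon_2^2 + \varepsilon_3^2$, a $\chi_3^2$-type quantity concentrated near $\bx^*$; and on the singleton-active pieces $\lambda$ picks up an extra $y_i^2$ penalty for the binding coordinate. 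Using these piecewise formulae I would evaluate $F_\lambda(z) := \mathbb P(\lambda(-1, \by) \leq z)$ and exhibit a specific $z$ at which $F_\lambda(z) < F_{\chi_1^2}(z)$; by \Cref{thm:interval_coverage_converse} this refutes coverage at $\bx^* = (0,0,1)^\tp$.

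The main obstacle is the exhaustive KKT case analysis and the quantitative CDF comparison: one must show that the redistribution of mass caused by the binding-constraint pieces (where $\lambda$ contains additional independent squared Gaussians) outweighs the exact $\chi_1^2$ contribution from the slack piece at at least one cutoff $z$. I expect this step to be most cleanly verified by direct evaluation of the piecewise Gaussian integrals rather than by a purely asymptotic tail comparison, since the heavier-$\chi_k^2$ behavior appears in regions where the $\by$-polyhedron is bounded and hence does not dominate the far tail; the violation of dominance is instead expected at a moderate $z$ corresponding to a practically relevant confidence level.
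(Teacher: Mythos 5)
Your overall framework is exactly the paper's: reduce coverage of the Rust--Burrus interval to validity of $q_\alpha = z_{\alpha/2}^2$ via \Cref{thm:interval_coverage_converse} (the paper packages this as \Cref{lemma:burruschi2,lemma:burruschi2B}), then decide stochastic dominance by $\chi^2_1$ case by case. Your Part 1 is essentially identical to the paper's \Cref{lemma:invalidity-counterexample}: the almost-sure coupling $\lambda(0,\by) \le \tfrac12(y_1-y_2)^2$ with $\tfrac12(y_1-y_2)^2 \sim \chi^2_1$, verified by a sign partition. That part is fine.

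Part 2 is where there is a genuine gap. You propose to refute dominance by exhibiting a specific $z$ with $F_\lambda(z) < F_{\chi^2_1}(z)$, computed from exact piecewise Gaussian integrals over the KKT polyhedra. Two problems. First, your piecewise formulae conflate the constrained minimum with the LLR itself: $\lambda(-1,\by)$ is the \emph{difference} of two optimal values, and the second term $\min_{\bx\ge\bm 0}\|\by-\bx\|^2 = \sum_i (y_i - \max\{y_i,0\})^2$ does not vanish on the regions you describe, so, e.g., the ``fully slack piece'' of the constrained problem does not give $\lambda$ exactly $\chi^2_1$-distributed there. Second, and more importantly, the decisive quantitative step --- evaluating $F_\lambda$ at some $z$ and beating $F_{\chi^2_1}(z)$ --- is left as an acknowledged ``obstacle,'' and it is genuinely hard: the CDF of a piecewise quadratic form of a Gaussian restricted to polyhedral cones has no clean closed form (the paper only ever plots this difference numerically). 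The paper sidesteps this entirely with the observation that stochastic dominance implies ordering of means, so it suffices to show $\mathbb{E}[\lambda(-1,\by)] > 1$. That expectation \emph{is} analytically tractable: the two optimal values are integrated separately, and symmetry/permutation arguments reduce each regional integral to a known fraction of a full-space Gaussian moment, yielding the exact values $13/6$ and $1 + 2\Phi(-1) - \phi(-1)$, whose difference exceeds $1$. Without either carrying out your CDF computation to completion or switching to a mean (or other moment) comparison, your Part 2 does not yet establish the counterexample.
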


The main idea of the proof is to first connect the conjecture to our framework, identifying the conjectured intervals as a particular case of our construction with a particular choice of $q_\alpha$. 
We then apply \Cref{thm:interval_coverage_converse} to show that coverage is equivalent to a valid choice of $q_\alpha$. 
Finally, we present a counterexample to prove that the proposed $q_\alpha$ is not universally valid.
The proof is divided into several lemmas for clarity.

Our approach is novel in that it diverges from previous geometric perspectives on the Gaussian likelihood \cite{rust_burrus,rust1994confidence, oleary_rust}, instead leveraging the test inversion and stochastic dominance perspectives developed in \Cref{sec:data_gen_test_def_inv_arg} and \Cref{subsec:stochastic_dominance}.

\subsection{Proof outline of \Cref{thm:burrus_false}}

This subsection provides a structured outline of the proof for \Cref{thm:burrus_false}, which refutes the \rustburrus conjecture.
We break down the proof into several key lemmas.

\begin{lemma}
[Framing the \rustburrus conjecture as test inversion]
\label{lemma:burruschi2}
The construction of intervals in \eqref{opt4burrus} for a particular instance of the problem $(\bx^*, \bK, \bh)$ is equivalent to the general construction in \Cref{thm:interval_coverage} for the model $\by \sim \mathcal{N}(\bK \bx^*, \bI_m)$, with $\bx^* \geq \bm{0}$ component wise, and $\varphi(\bx) = \bh^\tp \bx$, using the threshold $q_\alpha(\mu) = z_{\alpha/2}^2$ independent of $\mu$. 
Therefore, it is equivalent to inverting a hypothesis test $H_0: \bh^\tp \bx = \mu \text{ versus } H_1: \bh^\tp \bx \neq \mu$ with LLR
\begin{equation}\label{eq:llrburrus}
\lambda(\mu, \by) ~:= \min_{\substack{\bh^\tp \bx = \mu  \\ \bx \geq \bm{0}}} \lVert \by - \bK \bx \rVert_2^2 - \min_{\bx \geq \bm{0}} \lVert \by - \bK \bx \rVert_2^2.
\end{equation}
Furthermore, the interval has correct coverage if and only if $q_\alpha=z^2_{\alpha/2}$ is valid in the sense of satisfying the false positive guarantee \eqref{eq:combined_type_1}.
\end{lemma}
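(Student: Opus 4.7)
My plan is to verify three things in sequence: that the LLR matches \eqref{eq:llrburrus}, that the resulting interval matches \eqref{opt4burrus}, and that the equivalence between coverage and validity follows from the earlier theorems.

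First, I would specialize the definitions from \Cref{subsec:LRT} to this model. Since $\by \sim \mathcal N(\bK\bx,\bI_m)$, the log-likelihood is $\ell_\bx(\by) = -\tfrac12 \|\by-\bK\bx\|_2^2 + c$ for a constant $c$ independent of $\bx$. Therefore $-2\ell_\bx(\by) = \|\by-\bK\bx\|_2^2 - 2c$, and the additive constant cancels in the difference defining $\lambda(\mu,\by)$ in \eqref{eq:log_likelihood_ratio_full}. Substituting $\Phi_\mu = \{\bx : \bh^\tp \bx = \mu\}$ and $\mathcal X = \{\bx : \bx \geq \bm 0\}$ immediately yields \eqref{eq:llrburrus}.

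Second, I would plug the choice $q_\alpha(\mu) = z_{\alpha/2}^2$ into the set $\mathcal D(\by)$ of \eqref{eq:defD}. Again using that additive constants in $-2\ell_\bx(\by)$ cancel on both sides of the inequality, and recognizing that
\begin{equation*}
\inf_{\bx' \in \mathcal X} -2\ell_{\bx'}(\by) - (-2c) \;=\; \inf_{\bx' \geq \bm 0} \|\by - \bK \bx'\|_2^2 \;=\; s^2(\by),
\end{equation*}
the set $\mathcal D(\by)$ becomes $\{\bx : \|\by-\bK\bx\|_2^2 \leq z_{\alpha/2}^2 + s^2(\by)\} = \{\bx : \|\by-\bK\bx\|_2^2 \leq \psi_\alpha^2\}$. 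Intersecting with $\mathcal X$ and applying \eqref{intervaldef} with $\varphi(\bx) = \bh^\tp\bx$ reproduces \eqref{opt4burrus} verbatim, establishing that the Rust--Burrus construction is a special case of the test-inversion construction with this constant threshold.

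Third, for the coverage-validity equivalence: one direction (validity of $q_\alpha$ implies $1-\alpha$ coverage) is exactly \Cref{thm:interval_coverage}. For the converse, I would invoke \Cref{thm:interval_coverage_converse}, checking that its three hypotheses hold in our setting: $\mathcal X = \{\bx \geq \bm 0\}$ is the nonnegative orthant, which is a convex cone; $\ell_\bx(\by) = -\tfrac12\|\by-\bK\bx\|_2^2 + c$ is a concave quadratic in $\bx$; and $\varphi(\bx) = \bh^\tp\bx$ is linear. Hence the converse applies, and correct coverage of the interval \eqref{opt4burrus} forces $q_\alpha = z_{\alpha/2}^2$ to satisfy the false-positive condition \eqref{eq:combined_type_1}.

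No step of this proof is conceptually difficult since all heavy machinery is already packaged in \Cref{thm:interval_coverage} and \Cref{thm:interval_coverage_converse}; the only minor subtlety is bookkeeping the cancellation of the Gaussian normalization constant when matching the two forms of the feasibility constraint, and noting that the infimum in the definition of $s^2(\by)$ over $\mathcal X$ is attained (or at least well-defined as an infimum) so that $\psi_\alpha^2$ coincides with the data-dependent threshold appearing in $\mathcal D(\by)$.
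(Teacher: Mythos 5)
Your proof is correct and follows essentially the same route as the paper's: the paper's own argument is a one-line appeal to "direct inspection and substitution of $q_\alpha$ and $-2\ell_{\bx}(\by) = \lVert \by - \bK \bx \rVert_2^2$," together with \Cref{thm:interval_coverage} for the "if" direction and \Cref{thm:interval_coverage_converse} for the "only if" direction, which is precisely the substitution and the two invocations you carry out. Your version merely spells out the cancellation of the Gaussian normalization constant and the verification of the convex-cone/concavity/linearity hypotheses of \Cref{thm:interval_coverage_converse}, which the paper leaves implicit.
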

\begin{proof}
    See \Cref{sec:proof:lemma:burruschi2}.
\end{proof}

\begin{lemma}
[Reducing the \rustburrus conjecture to stochastic dominance]
\label{lemma:burruschi2B}
The construction of intervals in \eqref{opt4burrus} has the right coverage for any $\alpha$ (and hence the conjecture holds) for a particular instance of the problem $(\bx^*, \bK, \bh)$ if and only if the log-likelihood ratio test statistic
    \begin{equation*}
    \lambda(\mu = \bh^\tp\bx^*, \by) ~:= \min_{\substack{\bh^\tp \bx = \bh^\tp\bx^*  \\ \bx \geq \bm{0}}} \lVert \by - \bK \bx \rVert_2^2 - \min_{\bx \geq \bm{0}} \lVert \by - \bK \bx \rVert_2^2
    \end{equation*}
    is stochastically dominated by a $\chi^2_1$ distribution whenever $\by \sim \mathcal N(\bK \bx^*, \bI_m)$.
\end{lemma}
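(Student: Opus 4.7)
The plan is to chain together two equivalences already established in the paper: Lemma~\ref{lemma:burruschi2} reinterprets the Rust--Burrus interval as the inversion of the LLR test with the constant decision value $q_\alpha = z_{\alpha/2}^2$, and identifies correct coverage with the validity of this decision value in the sense of \eqref{eq:combined_type_1}; Lemma~\ref{lemma:SD}, in turn, characterizes when $Q_X(1-\alpha)$ is a valid decision value for every $\alpha$ as stochastic dominance of $X$ over $\lambda(\mu^*, \by)$. Observing that $z_{\alpha/2}^2$ is precisely $Q_{\chi^2_1}(1-\alpha)$ for every $\alpha \in (0,1)$ will immediately deliver the stated equivalence, with no further computation needed.

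Concretely, I would first fix the instance $(\bx^*, \bK, \bh)$ and let $\mu^* = \bh^\tp \bx^*$. By Lemma~\ref{lemma:burruschi2}, the interval \eqref{opt4burrus} has correct $1-\alpha$ coverage at this instance if and only if $q_\alpha = z_{\alpha/2}^2$ is a valid threshold for the LLR test defined by \eqref{eq:llrburrus}, that is, it satisfies the false positive guarantee \eqref{eq:combined_type_1}. Requiring correct coverage for \emph{every} $\alpha \in (0,1)$ is therefore equivalent to the family $\{z_{\alpha/2}^2\}_{\alpha \in (0,1)}$ being valid for every $\alpha$. Next, taking $X \sim \chi^2_1$ in Lemma~\ref{lemma:SD} and using the identity $Q_X(1-\alpha) = z_{\alpha/2}^2$, the joint validity across all $\alpha$ is equivalent to $X \succeq \lambda(\mu^*, \by)$ with $\by \sim \mathcal{N}(\bK \bx^*, \bI_m)$, which is exactly the stochastic dominance claim in the lemma. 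Both directions follow by reading these equivalences forwards and backwards.

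The only subtlety I would be careful about is the scope of the quantifier on $\alpha$: Lemma~\ref{lemma:burruschi2} gives an equivalence at a fixed $\alpha$, whereas Lemma~\ref{lemma:SD} characterizes stochastic dominance by validity of $Q_X(1-\alpha)$ across \emph{all} $\alpha$. This is why the statement of the present lemma requires correct coverage ``for any $\alpha$'' — without it, one would obtain only the partial characterization described in Remark~\ref{remark:sd}. Aside from this alignment, the argument is a pure composition of two prior results, so I do not anticipate any real obstacle; the substantive content of the reduction has already been absorbed into Lemmas~\ref{lemma:burruschi2} and \ref{lemma:SD}, and this lemma sets the stage for the concrete counterexample by reducing the remaining task to checking whether a single random variable is stochastically dominated by $\chi^2_1$.
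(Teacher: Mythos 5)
Your proposal is correct and follows essentially the same route as the paper's proof, which likewise reduces the claim to the identity $z_{\alpha/2}^2 = Q_{\chi^2_1}(1-\alpha)$ combined with an application of \Cref{lemma:SD} (with \Cref{lemma:burruschi2} supplying the coverage-to-validity equivalence). Your explicit attention to the quantifier over $\alpha$ is a welcome clarification but does not change the argument.
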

\begin{proof}
    See \Cref{sec:proof:lemma:burruschi2B}.
\end{proof}

As an example, the constrained one-dimensional example considered in \Cref{subsec:examples_theorem} satisfies the stochastic dominance result and hence the conjecture. 
Furthermore, using \Cref{lemma:unconstrchi2}, an alternative characterization of the conjecture is the stochastic dominance of the unconstrained LLR test statistic
$\min_{\bh^\tp \bx = \bh^\tp \bx^*} \lVert \by - \bK \bx \rVert_2^2 - \min_{\bx} \lVert \by - \bK \bx \rVert_2^2$ over the constrained test statistic $\min_{\substack{\bh^\tp \bx = \bh^\tp\bx^*  \\ \bx \geq \bm{0}}} \lVert \by - \bK \bx \rVert_2^2 - \min_{\bx \geq \bm{0}} \lVert \by - \bK \bx \rVert_2^2$.

We use \Cref{lemma:burruschi2B} to prove both that the example in \cite{tenorio2007confidence} obeys the conjecture and that our new counterexample does not. 
\myparagraph{Invalidity of a previous counterexample in two dimensions}
The previously proposed counterexample from \cite{tenorio2007confidence} is a two-dimensional problem with $\bK = \bI_2$, $\bx^* = (a,a)^\tp$ with $a \geq 0$, $\bh = (1,-1)^\tp$ (and therefore $\mu^* = \bh^\tp \bx^* = 0$).
The LLR test statistic is 
\begin{equation*}
\lambda(\mu^* = 0, \by) = \min_{\substack{x_1 = x_2 \\ \bx \geq \bm{0}}} \Vert \bx -\by \Vert^2_2 - \min_{\bx \geq \bm{0} } \Vert \bx-\by \Vert^2_2 
\end{equation*}
which, after solving the optimization problems, is equal to  
\begin{equation*}
\lambda(\mu^*, \by) = 
  \begin{cases} 
     y_1^2 + y_2^2 -(y_1 - \max(y_1, 0))^2 - (y_2 - \max(y_2, 0))^2, & y_1 + y_2 < 0, \\
    \frac 1 2 (y_1-y_2)^2 -(y_1 - \max(y_1, 0))^2 - (y_2 - \max(y_2, 0))^2, &  y_1 + y_2 \geq 0,
    \end{cases}
\end{equation*}
which we can equivalently write as 
\begin{align}\label{tenorioce}
    \lambda(\mu^*, \by) =\ & (y_1^2 + y_2^2)\mathbbm{1}\{y_1+y_2 < 0\} + \frac 1 2 (y_1-y_2)^2 \mathbbm{1}\{y_1+y_2 \geq 0\} \\&- y_1^2 \mathbbm{1}\{y_1<0\} -y_2^2\mathbbm{1}\{y_2 < 0\}.\notag
\end{align}
\begin{lemma}
    [Invalidity of a previous counterexample]
    \label{lemma:invalidity-counterexample}
    The LLR statistic $\lambda(\mu^*, \by)$ in \eqref{tenorioce} is stochastically dominated by a $\chi^2_1$ random variable whenever $\by \sim \mathcal N(\bx^*, \bI_2)$, $\bx^* = (a, a)^\tp$ for $a \ge 0$, and $\bh = (1, -1)^\top$. 
    Therefore, it does not constitute a valid counterexample to the conjecture.
\end{lemma}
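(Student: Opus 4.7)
The plan is to establish a pathwise (coupling) domination: I will exhibit a $\chi^2_1$ random variable that almost surely bounds $\lambda(\mu^*, \by)$ from above, after which the claimed stochastic dominance follows immediately from the nonrandom-bound discussion in \Cref{subsec:stochastic_dominance}. By \Cref{lemma:burruschi2B}, this suffices to prove that the conjecture holds on this instance and hence that the example does not refute it.

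The natural change of coordinates is $u := (y_1+y_2)/\sqrt{2}$ and $v := (y_1-y_2)/\sqrt{2}$. Under the null $\by \sim \mathcal{N}((a,a)^\tp, \bI_2)$, the pair $(u,v)$ is an independent Gaussian pair with $u \sim \mathcal{N}(a\sqrt{2}, 1)$ and $v \sim \mathcal{N}(0,1)$, so in particular $v^2 \sim \chi^2_1$. In these coordinates, $y_1^2 + y_2^2 = u^2 + v^2$, $\tfrac12(y_1-y_2)^2 = v^2$, the indicator $\{y_1+y_2 \geq 0\}$ becomes $\{u \geq 0\}$, and $y_1, y_2$ become $(u+v)/\sqrt{2}, (u-v)/\sqrt{2}$. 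I would then conjecture that $\lambda(\mu^*, \by) \leq v^2$ pointwise, and verify this by a case analysis on the orthant of $(y_1, y_2)$.

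Concretely, on the positive orthant $\{y_1 \geq 0,\, y_2 \geq 0\}$ one gets $\lambda = v^2$; on the negative orthant $\{y_1 < 0,\, y_2 < 0\}$ the cancellation $u^2 + v^2 - y_1^2 - y_2^2 = 0$ yields $\lambda = 0$; in the two mixed orthants the formula reduces either to $v^2 - \tfrac12(u \mp v)^2 \leq v^2$ (when $u$ has the same sign as the positive coordinate) or to $\tfrac12(u \pm v)^2$ (when $u$ has the opposite sign). The latter is the delicate case: the constraints $u+v \geq 0 > u$ (or their mirror) force $|u \pm v| \leq |v|\sqrt{2}$ and hence $\tfrac12(u \pm v)^2 \leq v^2$. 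Combining all four orthants gives $\lambda(\mu^*, \by) \leq v^2$ almost surely, from which $\mathbb{P}(\lambda \geq z) \leq \mathbb{P}(v^2 \geq z)$ holds for every $z \in \mathbb{R}$, which is exactly stochastic dominance by $\chi^2_1$.

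The main obstacle is the tightness of the case analysis. The bound $\lambda \leq v^2$ is saturated on the positive orthant, so there is no global slack to exploit, and the argument in each mixed orthant must use the precise interaction between the sign of $u$ and the positivity of a single $y_i$ (i.e., the coordinate in which the positivity constraint is binding). A conceptually informative way to read the conclusion is that the coupling shows the \emph{constrained} LLR is pathwise dominated by the \emph{unconstrained} LLR $\tfrac12(y_1-y_2)^2 \sim \chi^2_1$, a statement that is not at all obvious from the variational definitions alone since adding constraints pushes the numerator and denominator optima of \eqref{eq:llrburrus} up simultaneously; the rotation to $(u,v)$ is what exposes the cancellation.
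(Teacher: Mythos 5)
Your proposal is correct and is essentially the paper's own proof: both arguments establish the pointwise (almost-sure) coupling bound $\lambda(\mu^*,\by)\le \tfrac12(y_1-y_2)^2\sim\chi^2_1$ via a case analysis on the signs of $y_1+y_2$, $y_1$, and $y_2$, with your rotation to $(u,v)=((y_1+y_2)/\sqrt2,(y_1-y_2)/\sqrt2)$ being only a cosmetic repackaging of the same cases. Your verification of the delicate mixed-orthant case matches the paper's (where, incidentally, the paper's final displayed inequality has its direction typo'd; the intended conclusion $\lambda=y_1^2\le\tfrac12(y_1-y_2)^2$ is what you prove).
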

\begin{proof}[Proof sketch]
The proof follows from a coupling argument between the LLR and a $\chi^2_1$ random variable. 
See \Cref{proof:invalidity-counterexample} for proof details.
\end{proof}

In summary, we used \Cref{lemma:burruschi2B} to demonstrate that the previously proposed counterexample actually satisfies the conjecture.

\myparagraph{A new provably valid counterexample in three dimensions}
We now present a new counterexample in $\mathbb{R}^3$ to refute the \rustburrus conjecture. 
Specifically, we consider $\bK = \bI_3$, $\bx^* = (0,0,1)^\tp$, and $\bh = (1,1,-1)^\tp$, yielding $\mu^* = -1$. 
We prove that $\chi^2_1$ does not stochastically dominate $\lambda(\mu^*, \by)$, which in this case is
\begin{equation}\label{ource}
\lambda(\mu^* = -1, \by) ~= \min_{\substack{x_1 + x_2 -x_3 = -1\\ \bx \geq \bm{0}}} \Vert \bx -\by \Vert^2_2 - \min_{\bx \geq \bm{0} } \Vert \bx-\by \Vert^2_2.
\end{equation}
We prove that $\mathbb{E}[\lambda(\mu^*, \by)] > \mathbb{E}[\chi^2_1] = 1$. 
Here, the expectation is taken with respect to $\by \sim \mathcal N(\bx^*, \bI_3)$, and the inequality is a general sufficient condition to refute stochastic dominance and hence for the conjecture to break.
\begin{lemma}
    [Validity of a new counterexample]
    \label{lem:3d-counterexample}
    $\lambda(\mu^*, \by)$ in \eqref{ource} is \emph{not} stochastically dominated by a $\chi^2_1$ random variable whenever $\by \sim \mathcal N(\bx^*, \bI_3)$ with $\bx^* = (0,0,1)^\tp$. 
    Therefore, it constitutes a valid counterexample to the general conjecture.
\end{lemma}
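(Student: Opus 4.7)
The plan is to invoke the standard fact that for nonnegative random variables $X, Y$, the stochastic order $X \succeq Y$ implies $\mathbb{E}[X] \geq \mathbb{E}[Y]$. Since $\lambda(\mu^*, \by) \geq 0$ by construction (the null feasible set is a restriction of the alternative feasible set), it therefore suffices to show that
\[
\mathbb{E}_{\by \sim \mathcal{N}((0,0,1)^\tp, \bI_3)}\bigl[\lambda(\mu^*, \by)\bigr] > \mathbb{E}[\chi^2_1] = 1,
\]
exactly as announced in the paragraph preceding the lemma. So the whole proof reduces to a single scalar expectation computation.

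The first step is to produce an explicit piecewise form for $\lambda(\mu^*, \by)$. The ``alternative'' minimum $\min_{\bx \geq \bm{0}}\|\bx - \by\|_2^2$ is separable with $\hat x_i = \max(y_i, 0)$ and optimal value $\sum_{i=1}^3 y_i^2\mathbf{1}\{y_i<0\}$. The ``null'' minimum $\min\{\|\bx - \by\|_2^2 : x_1+x_2-x_3 = -1,\ \bx \geq \bm{0}\}$ is a convex QP whose KKT conditions I will solve via an active-set enumeration: one subproblem per subset of $\{x_1,x_2,x_3\}$ held at zero, excluding $\{1,2,3\}$ (infeasible with $\mu^* = -1$). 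In each of the resulting seven cases, the Lagrange multiplier and optimal $\bx$ are affine in $\by$, and the optimal value is a piecewise quadratic in $\by$ with regions delimited by hyperplanes. Subtracting the two pieces gives $\lambda(\mu^*, \by)$ as a piecewise polynomial of $\by$ on an explicit polyhedral partition of $\mathbb{R}^3$.

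Next I would integrate this piecewise polynomial against the $\mathcal{N}((0,0,1)^\tp, \bI_3)$ density. The cleanest organization is to first condition on the sign pattern of $(y_1, y_2, y_3)$ since that already fixes a large part of the active-set structure. In each sign cell, the expression simplifies further by the (sometimes inactive) equality constraint, and the required integrals reduce to finite combinations of Gaussian moments and evaluations of $\Phi$ and $\phi$, which can be carried out in closed form or at least verified with high-precision numerics. A more robust alternative is to bound $\lambda(\mu^*, \by)$ below by its restriction to a convenient subregion (for example where all $y_i$ have a favorable sign so the formula collapses to a simple quadratic) and show that even this lower integral exceeds $1$; combined with $\lambda \geq 0$ elsewhere, this yields the strict inequality.

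The main obstacle I foresee is not conceptual but combinatorial: keeping the seven-case enumeration and the subsequent Gaussian integrals organized cleanly enough to rule out algebraic slips, and securing a strict-rather-than-marginal excess over $1$. A safeguard is to cross-check any closed-form value against a large Monte Carlo estimate so that the separation from $1$ is unambiguous. Structurally, the reason one should expect this example to work where the two-dimensional example of \Cref{lemma:invalidity-counterexample} failed is an asymmetry: $\bh = (1,1,-1)^\tp$ pairs two positive weights against a single negative one while the mean $\bx^* = (0,0,1)^\tp$ pins the first two coordinates to the boundary of $\mathcal{X}$. The positivity constraint then distorts the distribution of $\lambda$ away from $\chi^2_1$ in a direction that inflates its mean, which is exactly what we need for the dominance to fail.
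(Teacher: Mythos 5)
Your proposal follows essentially the same route as the paper's proof: reduce the refutation of stochastic dominance to showing $\mathbb{E}[\lambda(\mu^*,\by)] > 1$, derive the piecewise-quadratic form of the constrained minimum by enumerating active sets, and evaluate the resulting Gaussian integrals over the polyhedral regions (the paper obtains the exact value $\tfrac{13}{6} - \bigl(1+2\Phi(-1)-\phi(-1)\bigr) \approx 1.09$ via symmetry arguments rather than numerics). One caution: since the excess over $1$ is only about $0.09$, your fallback of restricting to a single favorable subregion would almost certainly fail to clear the threshold, and a Monte Carlo cross-check cannot substitute for the exact computation in a rigorous proof.
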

\begin{proof}[Proof sketch]
We compute the expected value and show that it is greater than 1 (the expected value of a $\chi^2_1$), therefore proving stochastic dominance. 
See \Cref{ProofCounter} for the proof details.
\end{proof}

\begin{remark}
    [A more general counterexample]
    The validity of the counterexample does not hinge on $\bx^*$ being on the boundary of the constraint set. 
    In fact, the example remains valid for $\bx^* = (\varepsilon, \varepsilon, 1)^\tp$ with $\varepsilon > 0$ sufficiently small. 
    We choose $\varepsilon = 0$ for the simplicity of the proof. 
    See \Cref{fig:3d_quantiles} for numerical evidence, where the quantiles over the dashed line correspond to valid counterexamples.
\end{remark}

\Cref{fig:sdominance} shows the difference between the two examples. 
By plotting the difference between the CDF of $\lambda$ (obtained numerically with $N = 10^6$ samples) and the CDF of a $\chi^2_1$ distribution, we observe stochastic dominance for the two-dimensional example in \Cref{fig:sdominance} (left panel) and no stochastic dominance (hence breaking of the conjecture) for the three-dimensional example in \Cref{fig:sdominance} (right panel). \Cref{sec:numerical-examples} contains numerical coverage studies for both scenarios agreeing with the observation made here.

\begin{figure}[!t]
    \centering
    \begin{subfigure}{.5\textwidth}
      \centering
        \includegraphics[width=0.95\columnwidth]{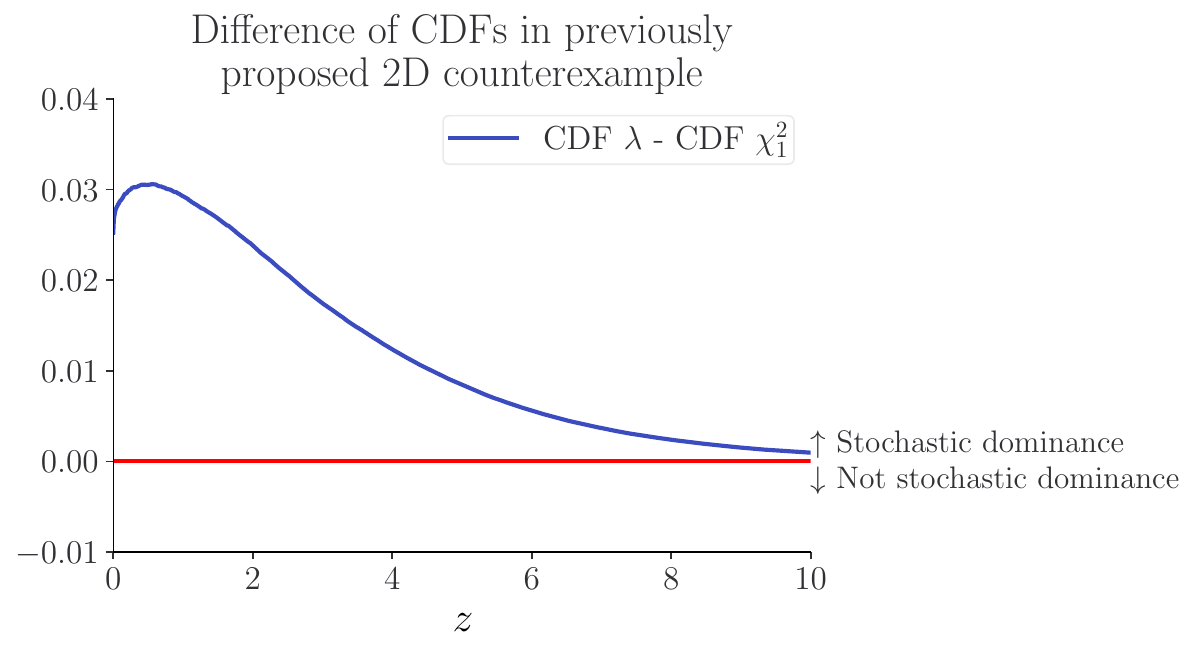}
      \label{fig:sdominanceA}
    \end{subfigure}%
    \begin{subfigure}{.5\textwidth}
      \centering
        \includegraphics[width=0.95\columnwidth]{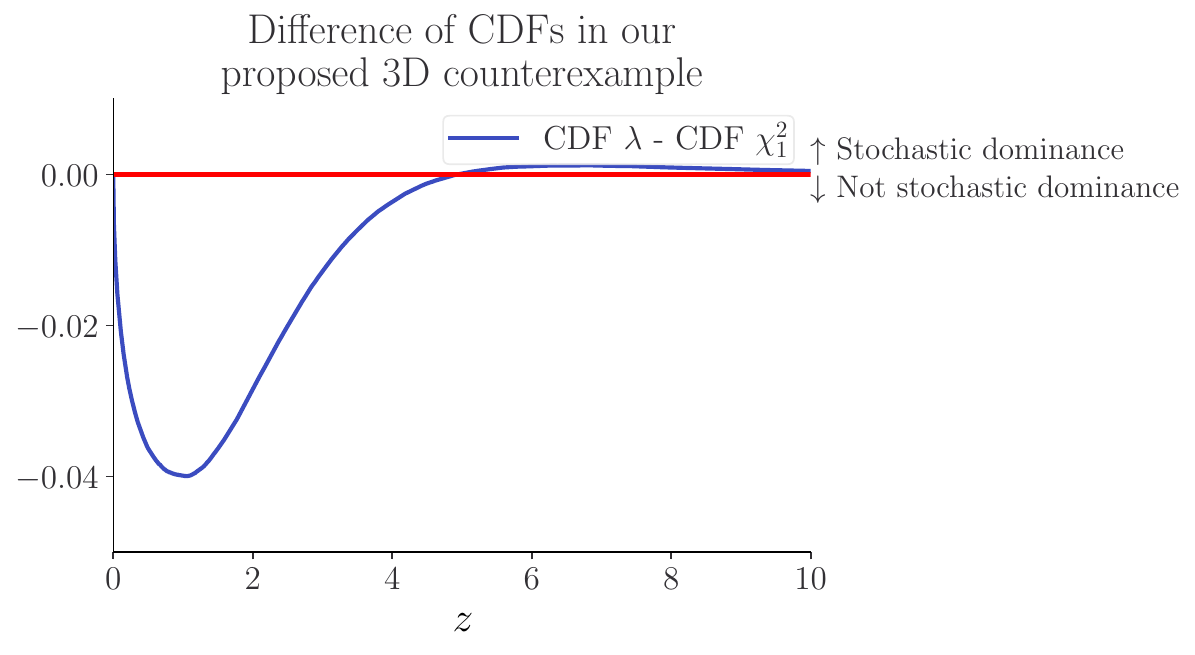}
      \label{fig:sdominanceB}
    \end{subfigure}
    \caption{
    Difference of cumulative distribution functions between the LLR test statistic and $\chi_1^2$ distribution for the statistics defined in \eqref{tenorioce} \textbf{(left)} and \eqref{ource} \textbf{(right)}. 
    Stochastic dominance, which is equivalent to the \rustburrus conjecture, is broken in the right example only. 
    There is a direct correspondence between the points at which the CDF difference is negative and confidence levels $1-\alpha$ that fail to hold (see \Cref{remark:sd}).
    }  
    \label{fig:sdominance}
\end{figure}

\subsection{A negative result in high dimensions}

After establishing that the $\chi^2_1$ distribution fails to stochastically dominate the constrained log-likelihood ratio, a natural question arises: Is there another distribution, possibly within the $\chi^2_k$ family, that can stochastically dominate the constrained LLR? 
If such a distribution exists, it would allow us to redefine $\psi^2_\alpha$ in \eqref{opt4burrus} as $s^2 + Q_X(1-\alpha)$, making the $Q_X$ term in the optimization problem dimension independent, leading to intervals with shorter length in large dimensions.
It is worth noting that in the unconstrained scenario, the LLR distribution is precisely $\chi^2_1$, regardless of the dimensionality of the problem. However, the following proposition shows that no such dimension-independent distribution exists for the constrained case.
\begin{proposition}
[A negative result in high dimensions]
\label{lemma:dimbounding}
The family of constrained LLRs for general $\bK, \bh$ in arbitrary dimensions, defined as
\begin{equation*}
\lambda(\mu = \bh^\tp\bx^*, \by) ~= \min_{\substack{\bh^\tp \bx = \bh^\tp\bx^*  \\ \bx \geq \bm{0}}} \lVert \by - \bK \bx \rVert_2^2 - \min_{\bx \geq \bm{0}} \lVert \by - \bK \bx \rVert_2^2,
\end{equation*}
cannot be stochastically dominated a dimension-independent way by any finite-mean distribution (including all $\chi^2_k$ for $k \ge 1$).
\end{proposition}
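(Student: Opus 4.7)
My plan is to exhibit a sequence of instances of the constrained Gaussian linear problem \eqref{eq:BurrusConjectureModel}, indexed by ambient dimension $n$, such that the expected value of the corresponding LLR grows without bound. Since stochastic dominance on $[0,\infty)$ implies domination of expectations (nonnegativity of $\lambda$ is built in, and by the support restriction in \Cref{subsec:stochastic_dominance} we may assume the dominating $X$ is supported on $[0,\infty)$), any candidate $X$ with finite mean would have to satisfy $\mathbb{E}[X] \geq \mathbb{E}[\lambda_n]$ for every $n$, which the construction will contradict.

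The construction I would use is the degenerate one that collapses the constrained subproblem. Take $\bK_n = \bI_n$, $\bh_n = \bm{1}_n$, and $\bx^*_n = \bm{0}$, so that $\mu^*_n = \bh_n^\tp \bx^*_n = 0$ and $\by \sim \mathcal{N}(\bm{0}, \bI_n)$. The point $\bx^*_n = \bm{0}$ lies in $\mathcal X = \{\bx \geq \bm{0}\}$, so this is a legitimate instance of \eqref{eq:BurrusConjectureModel}. The key observation is that the constraint set $\{\bx : \bm{1}^\tp \bx = 0,\ \bx \geq \bm{0}\}$ is the singleton $\{\bm{0}\}$, so the constrained minimum equals $\lVert \by \rVert_2^2$, while the positivity-only subproblem is solved coordinatewise by $x_i = \max(y_i, 0)$, yielding minimum value $\sum_{i=1}^n y_i^2 \mathbb{1}\{y_i < 0\}$.

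Subtracting the two gives the closed form
\[
\lambda_n(\mu^*_n, \by) = \lVert \by \rVert_2^2 - \sum_{i=1}^n y_i^2 \mathbb{1}\{y_i < 0\} = \sum_{i=1}^n y_i^2 \mathbb{1}\{y_i \geq 0\}.
\]
Since each $y_i \sim \mathcal{N}(0,1)$, by symmetry $\mathbb{E}[y_i^2 \mathbb{1}\{y_i \geq 0\}] = \tfrac{1}{2}$, hence $\mathbb{E}[\lambda_n] = n/2$, which diverges.

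To close the argument, suppose for contradiction that some distribution $X$ with $\mathbb{E}[X] = M < \infty$ stochastically dominates the LLR across all admissible $(\bK, \bh, \bx^*)$. Applied to the family above, stochastic dominance gives $M \geq \mathbb{E}[\lambda_n] = n/2$ for every $n$, which is impossible. The only real content of the proof is spotting the right family; the degenerate construction $\bh_n = \bm{1}_n$, $\bx^*_n = \bm{0}$ makes the constrained subproblem trivially rigid, so I do not anticipate a genuine obstacle beyond recording this observation and the standard fact that $X \succeq Y$ with $Y \geq 0$ implies $\mathbb{E}[X] \geq \mathbb{E}[Y]$.
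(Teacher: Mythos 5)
Your proof is correct, and it follows the same meta-strategy as the paper (exhibit a sequence of instances whose LLR expectations diverge, then invoke the fact that stochastic dominance of nonnegative random variables implies ordering of means), but your construction is genuinely different and substantially more elementary. The paper takes $\bK = \bI_p$, $\bx^* = (0,\dots,0,1)^\tp$, $\bh = (1,\dots,1,-1)^\tp$, for which the null feasible set $\{\bh^\tp\bx = -1,\ \bx \ge \bm{0}\}$ is a genuine $(p-1)$-dimensional surface; as a result the paper must compute $\mathbb{E}[\min_{\bx\ge\bm{0}}\|\bx-\by\|_2^2]$ exactly and lower-bound the constrained term via Fenchel duality, a careful choice of feasible dual point, and moment bounds on $\max_i z_i$, arriving at a bound that is $\mathcal{O}(p)$. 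Your choice $\bh = \bm{1}_n$, $\bx^* = \bm{0}$ collapses the null feasible set to the singleton $\{\bm{0}\}$, so both optima are available in closed form and $\lambda_n = \sum_i y_i^2\,\bm{1}\{y_i \ge 0\}$ has mean exactly $n/2$ — no duality needed. Your instance is a legitimate member of the problem class (the proposition quantifies over general $\bK,\bh$ and only requires $\bx^*\ge\bm{0}$), so it suffices to prove the stated result. What the paper's heavier construction buys is the reassurance that the unbounded growth is not an artifact of a degenerate (zero-dimensional) null set: it exhibits the failure for instances that directly extend the three-dimensional counterexample of \Cref{lem:3d-counterexample}, where the constrained subproblem is nontrivial. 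If you wanted your argument to carry that extra force, you would need to perturb away from the singleton, at which point something like the paper's duality bound becomes necessary.
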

\begin{proof}[Proof sketch]
We construct a sequence of examples with increasing dimensions and demonstrate that the expected value of the constrained LLR grows unbounded as the dimension increases. 
This result negates the possibility of stochastic dominance by any finite-mean distribution. 
For a detailed proof, see \Cref{proof:dimbounding}.
\end{proof}

\section{Numerical examples}
\label{sec:numerical-examples}

In this section, we provide numerical illustrations for the procedures and theoretical results described above. 
In particular, we analyze coverage properties of four types of intervals. The first two intervals come from previous works: $\cI_{\SSB}$ \eqref{simultaneous}, which has provably correct coverage but is known to overcover when inferring a single functional, and $\cI_{\OSB}$ \eqref{RB1}, which comes from the Burrus conjecture and, as we proved in this work, does not correctly cover in general. 
The last two are the interval constructions described in this work by solving the max quantile problems \eqref{eq:QuantileOpt1} and \eqref{eq:QuantileOpt2} to find $q_\alpha(\mu)$ (\Cref{subsec:algorithm}): $\cI_{\MQ}$ (where $q_\alpha$ does not depend on $\mu$) and the more refined $\cI_{\mathsf{MQ\mu}}$ (where $q_\alpha$ depends on $\mu$). Both of these intervals have provable coverage.

We analyze four settings in which the Burrus conjecture applies, including a one-dimensional example in \Cref{subsec:1d_numerics}, the previously proposed invalid counterexample in \Cref{subsec:2d_gauss_num} (for which we prove in \Cref{lemma:invalidity-counterexample} that $\cI_{\OSB}$ correctly covers), our proposed counterexample in \Cref{subsec:3d_numerics}  (for which we prove in \Cref{lem:3d-counterexample} that $\cI_{\OSB}$ does not cover for at least some level $1-\alpha$ and a certain $\bm{x}^*$), and a setting with a bounded constraint set\footnote{Strictly speaking, this setting is not included in the original \rustburrus conjecture but is later extended in \cite{patil,stanley_unfolding}.} $\mathcal X$ in \Cref{subsec:bounded_domain}.

Throughout the examples, the observed coverage (or lack thereof) agrees with the developed theoretical results, and we observe that our interval $\cI_{\mathsf{MQ\mu}}$ consistently fixes the miscalibration of the other interval types: when $\cI_{\OSB}$ undercovers, $\cI_{\mathsf{MQ\mu}}$ is on average longer than $\cI_{\OSB}$ to obtain coverage, and when $\cI_{\OSB}$ overcovers, $\cI_{\mathsf{MQ\mu}}$ is on average shorter than $\cI_{\OSB}$ with coverage closer to the prescribed level $1-\alpha$. 

\subsection{Constrained Gaussian in one dimension} 
\label{subsec:1d_numerics}

We revisit the constrained Gaussian model in one dimension \eqref{eq:1d_toy_model} described in \Cref{subsec:examples_theorem}, $y= x^* + \varepsilon, \, \varepsilon \sim \mathcal{N}(0,1), \, x^* \geq 0$ and $\varphi(x) = x$
We perform a simulation experiment using six true parameter settings of $x^* \in \{0, 2^{-3}, 2^{-2}, 2^{-1}, 2^0, 2^1 \}$. 
We focus on settings closer to the boundary since that is where the biggest differences between the considered intervals exist. 
For each of these settings, we simulate $10^5$ observations according to the model \eqref{eq:1d_toy_model} and compute three different $95\%$ confidence intervals for each sample: interval \eqref{intervaldef} using the actual quantile function given in \eqref{eq:1d_quantile_func} ($\cI_{\MQ_\mu}$\footnote{Since this is a one-dimensional problem with $\varphi(x) = x$, this is equivalent to being able to solve all the $\mu-$dependant quantile optimization problems}), interval \eqref{intervaldef} using the stochastically dominating $Q_{\chi^2_1}(1 - \alpha)$ quantile ($\cI_{\OSB}$, which in this problem is equal to $\cI_{\MQ}$), and the standard Truncated Gaussian interval, which equals the SSB interval in this case ($\cI_{\SSB}$). 
The intervals computed with the true quantile function are characterized by:
\begin{equation}\label{opt4new1d_quantile}
\begin{aligned}
\cI_{\MQ_\mu}(y) 
:=~ 
\min_{x}/\max_{x} \quad & x \\
\st \quad & x \geq 0\\
& (x-y)^2 \leq q_\alpha(x) + \min_{x \geq 0} (x-y)^2,
\end{aligned}
\end{equation}
where $q_\alpha(x)$ is given by \eqref{eq:1d_quantile_func}.
For the stochastically dominating $Q_{\chi^2_1}(1 - \alpha)$, the interval in \eqref{intervaldef} becomes:
\begin{equation}\label{opt4new1d}
\begin{aligned}
\cI_{\OSB}(y) 
=  \min_{x}/\max_{x} \quad & x \\
\st \quad & x \geq 0\\
& (x-y)^2 \leq Q_{\chi^2_1}(1 - \alpha) + \min_{x \geq 0} (x-y)^2.
\end{aligned}
\end{equation}
Finally, the truncated Gaussian interval, which is shown below to be equivalent to the SSB interval in this case, is defined as:
\begin{equation}\label{eq:TG1D}
    \cI_{\SSB}(y) := \left[ y - z_{\alpha / 2},\; y + z_{\alpha / 2} \right] \cap \mathbb{R}_{\ge 0}.
\end{equation}
Observe that \eqref{opt4new1d} admits an explicit solution:
\begin{align}\label{eq:explicit1d}
\cI_{\OSB}(y) = 
\begin{cases}
[y - \sqrt{Q_{\chi^2_1}(1 - \alpha)},\ y + \sqrt{Q_{\chi^2_1}(1 - \alpha)} ]  \cap \mathbb{R}_{\ge 0}, & y \geq 0 \\     
[y - \sqrt{Q_{\chi^2_1}(1 - \alpha) + y^2},\; y + \sqrt{Q_{\chi^2_1}(1 - \alpha) + y^2} ]  \cap \mathbb{R}_{\ge 0}, & y < 0.
\end{cases}
\end{align}
Furthermore, note that $\sqrt{Q_{\chi^2_1}(1 - \alpha)} = z_{\alpha/2}$, so that \eqref{eq:explicit1d} is always larger than or equal to \eqref{eq:TG1D}. 
Conversely, we can express \eqref{eq:TG1D} as the solution to optimization problems, illustrating that the truncated Gaussian interval is equivalent to the SSB interval for this case:
\begin{equation}\label{TGopt1D}
\begin{aligned}
\cI_{\SSB}(y) =  \min_{x}/\max_{x} \quad & x \\
\st \quad & x \geq 0\\
& (x-y)^2 \leq z_{\alpha/2}^2.
\end{aligned}
\end{equation}
To empirically estimate coverage, for each $x^*$ setting and each interval type, we compute $10^5$ intervals and keep track of their coverage of the true parameter. 
The left panel in \Cref{fig:1d_interval_coverages_and_lengths} shows how $\cI_{\OSB}$ based on $Q_{\chi^2_1}(1 - \alpha)$ over-covers when the true parameter is on the boundary, which makes sense as this setting of $q_\alpha$ holds for all $x^*$, and therefore is a conservative quantile.
As expected, the interval computed with $q_\alpha(x)$ maintains the nominal $95\%$ coverage over all considered $x^*$ values. This shows that knowing the quantile function means that we can compute an interval with exact nominal coverage that is adaptive to the unknown true parameter value.
Additionally, we note that as $x^*$ grows, the estimated coverage values across these methods converge, illustrating the intuition that when $x^*$ gets sufficiently far from the constraint boundary, the problem is essentially unconstrained, and all considered methods produce nearly identical results. 
The right panel in \Cref{fig:1d_interval_coverages_and_lengths} shows each interval's expected length as a function of $x^*$.
Again, we observe the tightness of the interval \eqref{intervaldef} constructed with the true quantile function $q_\alpha(x)$ compared to the interval constructed with the stochastically dominating quantile, $Q_{\chi^2_1}(1 - \alpha)$. 
Similarly to coverage, as $x^*$ grows, the expected interval lengths of $\cI_{\MQ_\mu}$ and $\cI_{\OSB}$ converge, and the methods become indistinguishable. 
Also, observe that the truncated Gaussian intervals have a smaller expected length compared to the intervals computed with $q_\alpha(x)$.
We note that this length observation is particular to this one-dimensional example, as OSB intervals have been shown to be shorter than SSB intervals in higher dimensional problems \cite{oleary_rust, rust_burrus, stanley_unfolding}.
\begin{figure}[!t]
    \centering
    \includegraphics[width=0.9\columnwidth]{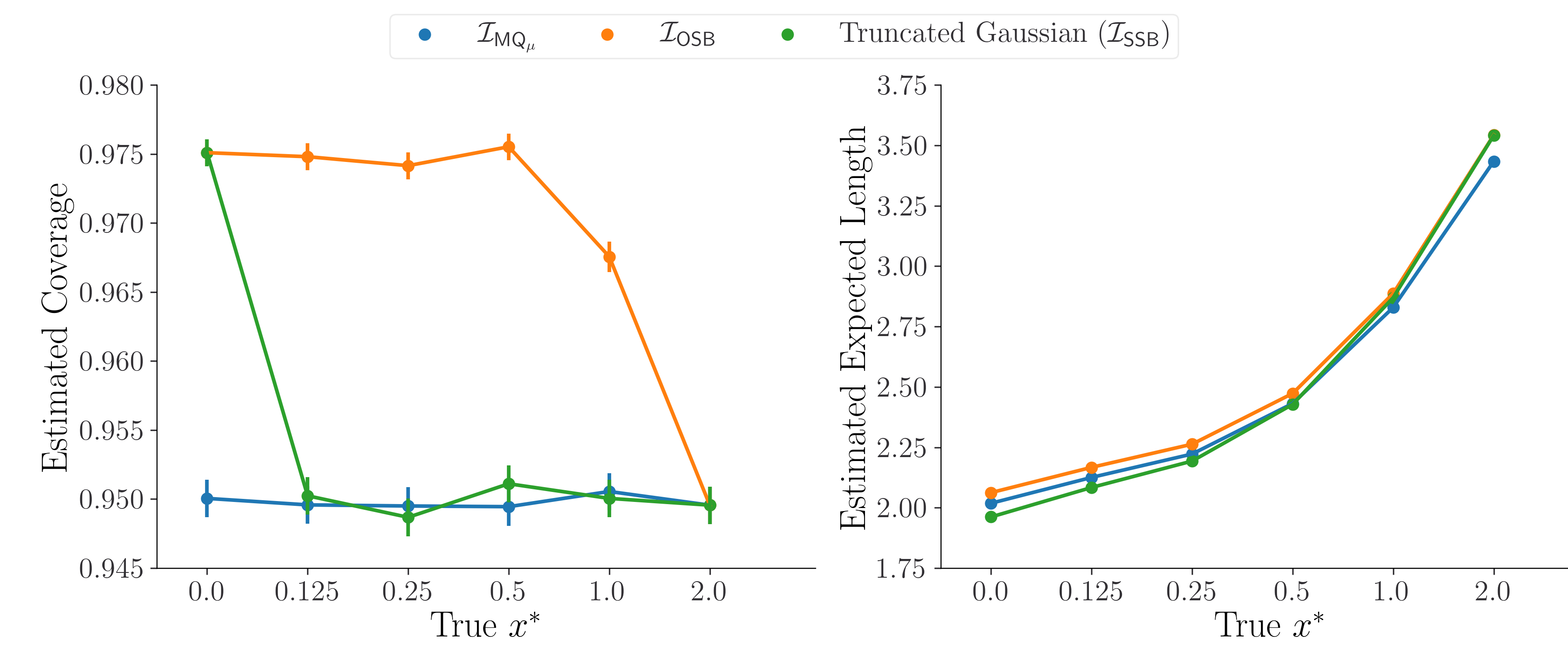}
    \caption{
    The \textbf{(left)} figure shows estimated coverage for each $95\%$ confidence interval and each true $x^*$ for the one-dimensional constrained Gaussian model. 
    Both the Truncated Gaussian (SSB) and OSB intervals overcover when $x^* = 0$ while the MQ$\mu$ interval predictably achieves nominal coverage. 
    All the coverage values converge as $x^*$ gets larger, as the problem moves toward the unconstrained problem where all the intervals are effectively the same. 
    The intervals surrounding the estimated values are $95\%$ Clopper--Pearson intervals, expressing the Monte Carlo uncertainty of each coverage estimate. 
    The \textbf{(right)} figure shows an estimate of the expected interval length for each method (with $95\%$ confidence intervals that are nearly length zero since the standard error of each estimate is nearly zero with $10^5$ realizations each). 
    Similarly to the coverage results in the left panel, as $x^*$ gets larger, the expected OSB and MQ$\mu$ interval lengths converge while the SSB intervals remain slightly larger.
    }
    \label{fig:1d_interval_coverages_and_lengths}
\end{figure}

\subsection{Constrained Gaussian in two dimensions} \label{subsec:2d_gauss_num}

We consider the Gaussian linear model in \eqref{eq:lineargaussianmodel} with $\bK = \bI_2$, $\varphi(\bx) = \bh^\tp \bx = x_1 - x_2$ and $\mathcal X = \{\bx \in \mathbb{R}^2: \bx \geq 0 \}$.
The work \cite{tenorio2007confidence} proposes this scenario as a counterexample to the \rustburrus conjecture, but as shown in \Cref{lemma:invalidity-counterexample}, it is in fact a case where the $\chi^2_1$ distribution stochastically dominates the LLR for all true $\bx^* \in \{\bx: \bh^\tp \bx = 0, \, \bx \geq 0\}$, so the OSB intervals proposed by the conjecture have provably correct coverage for $\bx^*$ in this set. 

We estimate interval coverage with $1-\alpha = 0.95$ for the four types of intervals ($\cI_{\SSB}$, $\cI_{\OSB}$, $\cI_{\MQ}$ and $\cI_{\mathsf{MQ\mu}}$) for three true parameter values, two of them inside the $\{\bx \colon \bh^\tp \bx = 0, \, \bx \geq 0\}$ region in which \Cref{lemma:invalidity-counterexample} applies, and one outside. In this and all the examples that follow, we solve the max quantile optimization problems \eqref{eq:QuantileOpt1}, \eqref{eq:QuantileOpt2} using Bayesian Optimization (see \cref{fig:2d_heatmap} for an illustration of the quantile function being optimized in this particular example), and we solve the outer optimization problems with a convex optimization package or a root-finding numerical algorithm (in the case of $\cI_{\mathsf{MQ\mu}}$, using the functional space formulation). For each parameter value and all intervals, coverage and expected length are estimated by drawing $5 \times 10^4$ observations from the data generating process, computing all interval types for each generated observation, and then checking coverage and length.  The coverage confidence intervals are $95\%$ Clopper--Pearson intervals for a binomial parameter, whereas the length confidence intervals are standard asymptotic Gaussian intervals using sample means and standard errors.

The results are shown in \cref{fig:2d_coverage_and_length}. 
We observe correct coverage for the OSB intervals, in agreement with \Cref{lemma:invalidity-counterexample} (which applies for $\bx^* = (0,0)^\top$ and $\bx^* = (0.33, 0.33)^\top$). We observe that the SSB intervals are the longest on average and tend to overcover and that the MQ and MQ$\mu$ intervals have nearly identical properties to the OSB intervals. This is because, for this problem setting, solving the optimization problems \eqref{eq:QuantileOpt1} and \eqref{eq:QuantileOpt2} recovers the $\chi^2_1$ quantile (up to the numerical precision of the optimization solvers) of the Burrus conjecture as the maximum quantile (both for all $\mathcal{X}$ and for $\bh^\top\bx =\mu$ for any $\mu$), so both of those intervals actually recover the OSB intervals in this case.

\begin{figure}[!t]
    \centering
    \includegraphics[width = 0.9\columnwidth]{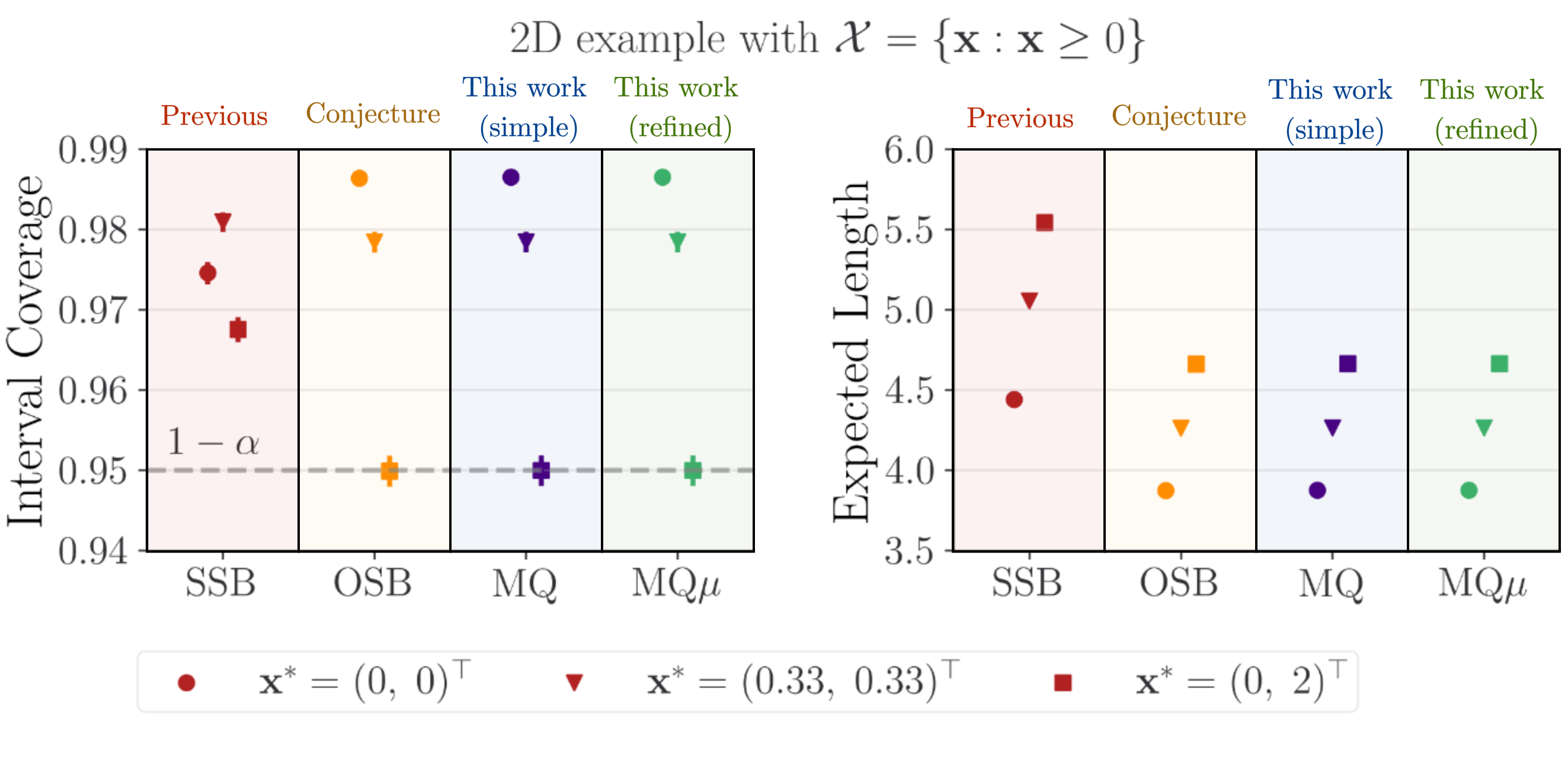}
    \caption{
    Estimated interval coverage \textbf{(left)} and expected lengths \textbf{(right)} for $95\%$ intervals resulting from the SSB, OSB, MQ and MQ$
    \mu$ methods for the Gaussian linear model in \eqref{eq:lineargaussianmodel} with $\bK = \bI_2$, $\varphi(\bx) = \bh^\tp \bx = x_1 - x_2$ and $\mathcal X = \{\bx \in \mathbb{R}^2: \bx \geq 0 \}$.
    }
    \label{fig:2d_coverage_and_length}
\end{figure}

\begin{SCfigure}[50][!t]
    \centering
    \includegraphics[width=0.35\columnwidth]{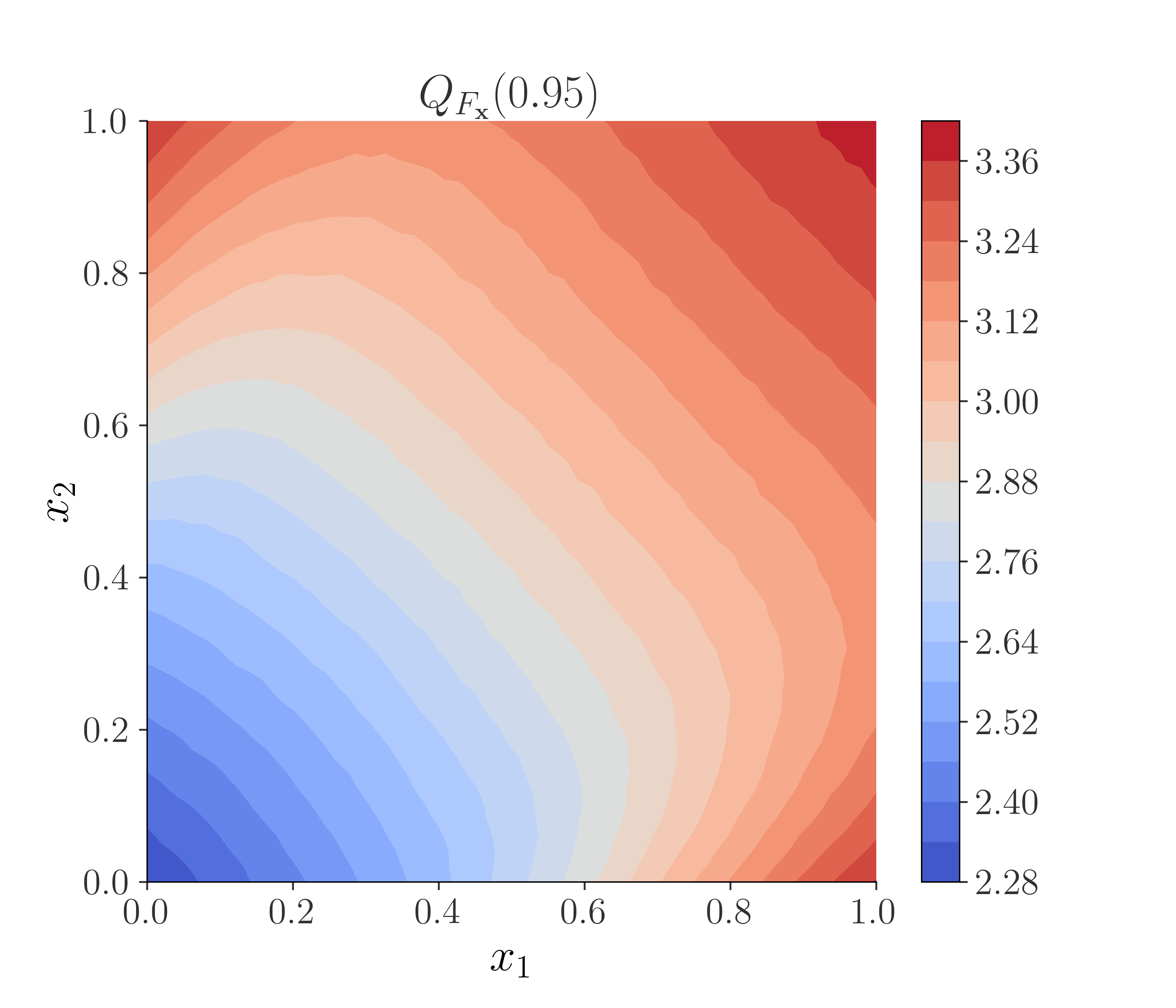}
    \caption{
    Estimated $95$th quantiles from the LLR test statistic null distributions in the region $[0, 1]^2 \subset \mathbb{R}^2_+$ where color shows the estimated quantiles. In contrast to the unconstrained case, the quantile is dependent on the true parameter, and the quantile surface is non-trivial.
    \vspace{4em}
    }
    \label{fig:2d_heatmap}
\end{SCfigure}

\subsection{Constrained Gaussian in three dimensions} \label{subsec:3d_numerics}

We use the three-dimensional counterexample of the \rustburrus conjecture from \Cref{sec:rust_burrus} to numerically show that the $\cI_\OSB$ intervals undercover in this example and that the max-quantile intervals are able to fix the undercoverage. Concretely, we consider the Gaussian linear model in \eqref{eq:lineargaussianmodel} with $\bK = \bI_3$, $\varphi(\bx) = \bh^\tp \bx = x_1 + x_2 - x_3$ and $\mathcal X = \{\bx \in \mathbb{R}^3: \bx \geq 0 \}$. We repeat the same experimental setup as in \Cref{subsec:2d_gauss_num}, comparing the four interval types for $\bx^* = (0,0,0)^\top$ and $\bx^* = (0,0,1)^\top$; this last parameter value is the one analyzed in \Cref{lem:3d-counterexample} and for which we know the OSB interval can undercover for some $\alpha$. \Cref{fig:3d_coverage_length68} shows the results for $1-\alpha = 0.68$ (one sigma interval coverage), and we include in \Cref{app:sec:numerical-examples} the results for $1-\alpha = 0.95$, which lead to the same conclusions. 
Furthermore, to illustrate that the conjecture breaks in an area around the studied point $(0,0,1)^\top$ in \Cref{fig:3d_quantiles}, we plot the numerically estimated $68\%$ LLR test statistic quantiles for parameters of the form $(t, t, 1)$, showing that there is a range of points with a larger quantile than the $\chi^2_1$ quantile, implying undercoverage.

The results agree with our theoretical findings in \Cref{lem:3d-counterexample}, as we observe that the OSB intervals undercover both at $95\%$ and $68\%$ confidence levels, thus invalidating the \rustburrus conjecture. In contrast, the SSB, MQ and MQ$\mu$ intervals all have provable coverage in this scenario, which is reflected in the estimated coverage values. Notably, the MQ$\mu$ intervals are not much longer than the OSB intervals, but they obtain the required coverage, enlarging the conjectured intervals just enough. The simpler MQ intervals overcover a bit more, which illustrates the benefit of solving \eqref{eq:QuantileOpt2} over \eqref{eq:QuantileOpt1} when computationally feasible. Nevertheless, their length is not much larger than MQ$\mu$ and significantly smaller than for SSB, the other simple method with coverage guarantees.

\begin{figure}[!t]
    \centering
    \includegraphics[width = 0.9\columnwidth]{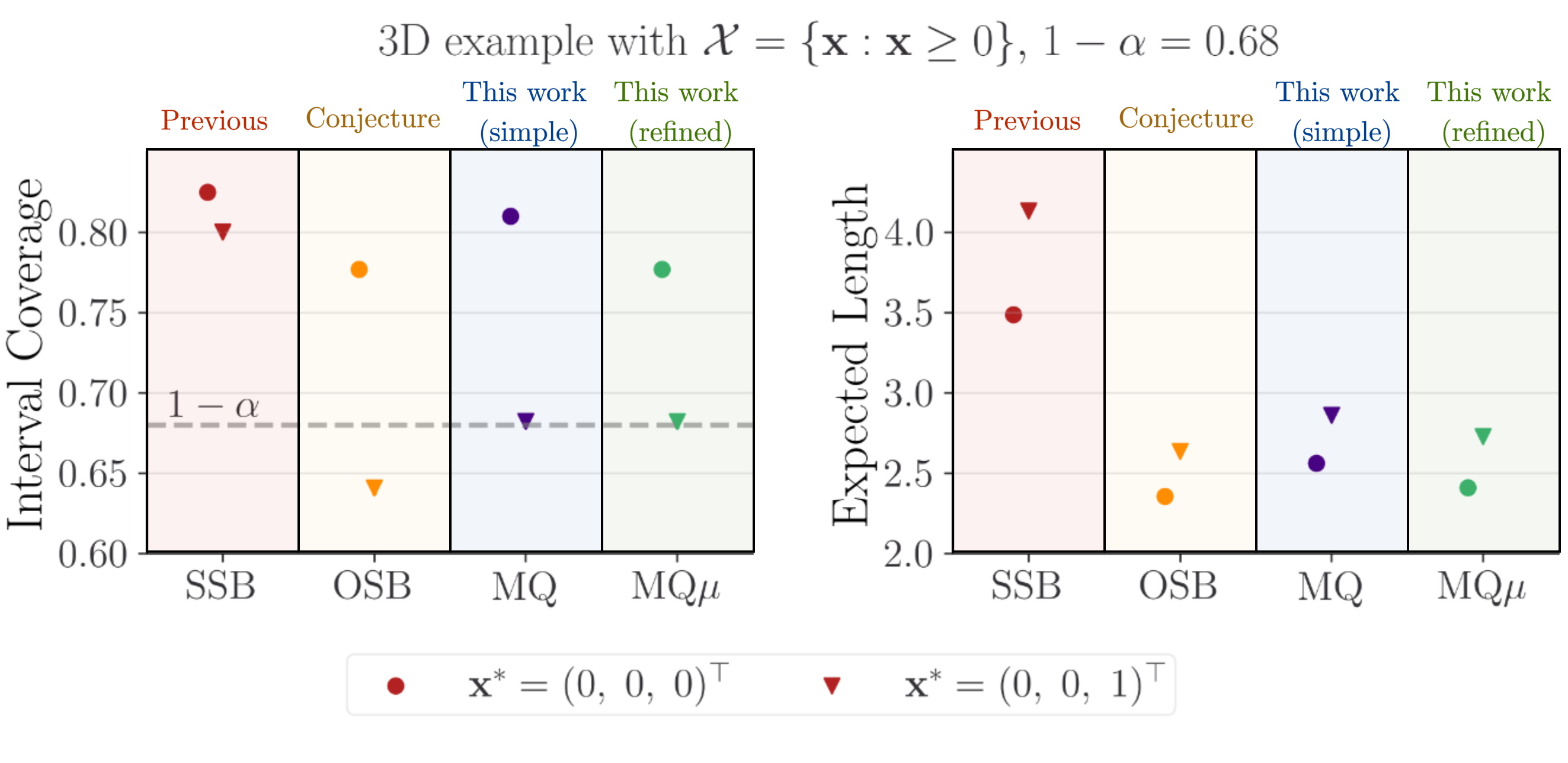}
    \caption{
    Estimated interval coverage \textbf{(left)} and expected lengths \textbf{(right)} for 68\% intervals resulting from the SSB, OSB, MQ and MQ$\mu$ methods for the Gaussian linear model in \eqref{eq:lineargaussianmodel} with $\bK = \bI_3$, $\varphi(\bx) = \bh^\tp \bx = x_1 + x_2 - x_3$ and $\mathcal X = \{\bx \in \mathbb{R}^3: \bx \geq 0 \}$.
    }
    \label{fig:3d_coverage_length68}
\end{figure}

\begin{SCfigure}[50][!t]
    \centering
    \includegraphics[width=0.49\columnwidth]{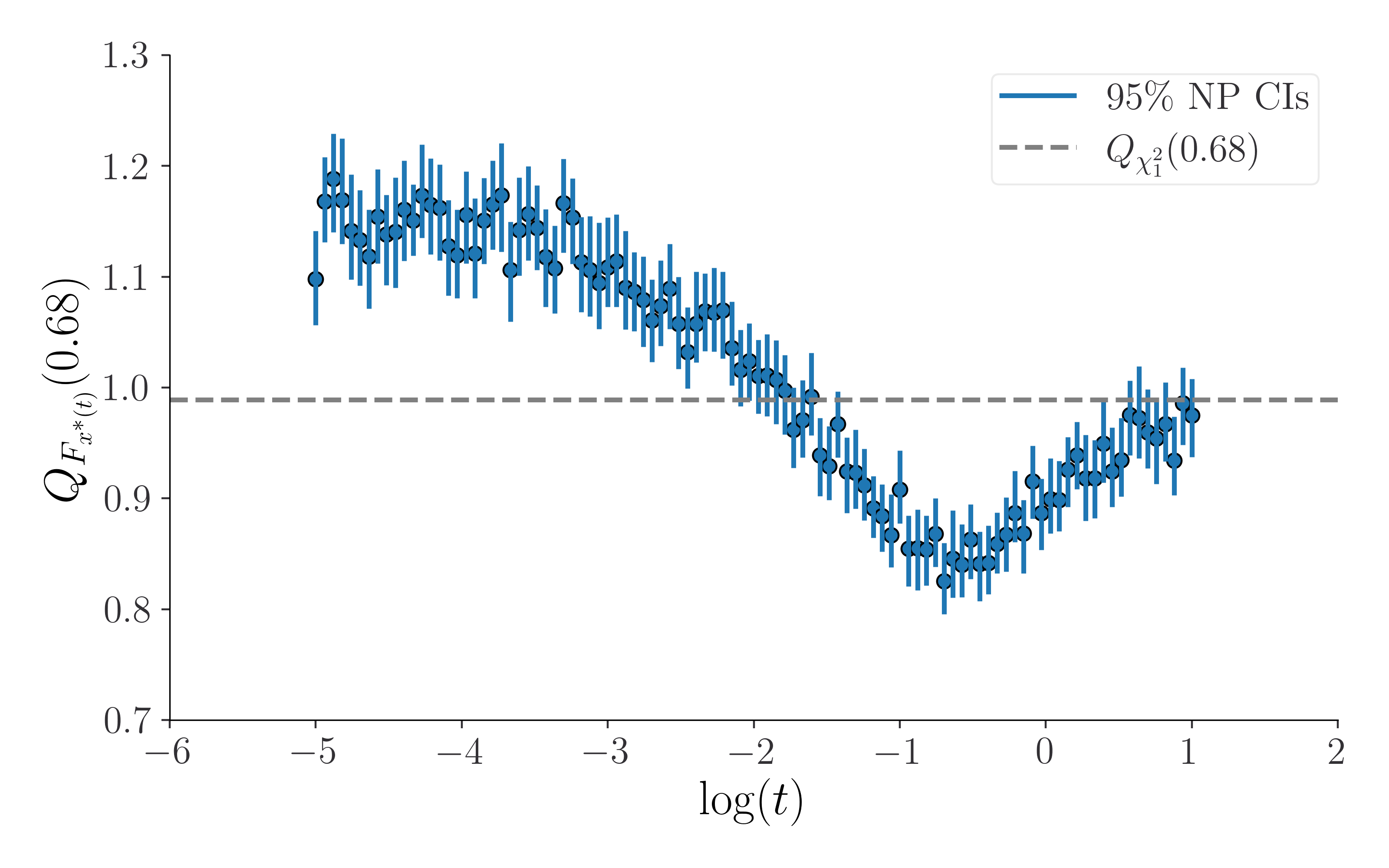}
    \caption{
    For the numerical example in \Cref{subsec:3d_numerics}, considering $\bx^* \in \left\{\bx^*(t) = (t, t, 1)^\tp: 0 < t \leq e \right\} \subset \mathbb{R}^3$, we estimate the $68\%$ LLR test statistic quantiles along with $95\%$ nonparametric (NP) confidence intervals for percentiles \cite{hahn_meeker}.
    The test statistic quantiles exceeding $\chi^2_{1, 0.32}$ correspond to the \rustburrus conjecture failing in this scenario. 
    We note that for this example, the Burrus conjecture fails close to the constraint boundary while the $Q_{\chi^2_1}(0.68)$ quantile becomes valid once sufficiently far from the boundary.
    \vspace{1em}
    }
    \label{fig:3d_quantiles}
\end{SCfigure}

\subsection{Bounded constraint set in two dimensions} \label{subsec:bounded_domain}

As a last case study, we consider a modification of the example in \Cref{subsec:2d_gauss_num} in which the constraint set is chosen to be the bounded set $\mathcal{X} = [0,1]^2$. While not in the original scope of the \rustburrus conjecture, which only considers $\mathcal{X}=\{\bx:\bx\geq 0\}$, as mentioned in \Cref{subsec:intro_b_conject}, the $\cI_{\OSB}$ interval construction has since then been used with constraints of the form $\mathbf{A}\bx\leq \mathbf{b}$ by replacing $\bx \geq 0$ with $\mathbf{A}\bx\leq\mathbf{b}$ in the optimization problems \eqref{RB1} and \eqref{RB2} \cite{patil, stanley_unfolding}. We use the same experimental setup as in \Cref{subsec:2d_gauss_num}, taking into account that the change in $\mathcal{X}$ affects both the interval optimization problem and the optimizations required for $q_\alpha(\mu)$ (since the LLR statistic changes as well). 

 The results are shown in \Cref{fig:box_results}. We observe that in this setting, as opposed to the previous three-dimensional problem, the OSB intervals \emph{overcover}, because the maximum quantile of the LLR test statistic over the constraint set is \emph{smaller} than the $\chi^2_1$ quantile. Furthermore, our intervals MQ, and especially MQ$\mu$, are able to exploit this fact to obtain shorter intervals than OSB with coverage closer to $1-\alpha$ by using the actual max quantiles over $\mathcal{X}$ instead of the $\chi^2_1$ quantile used by OSB.

 \begin{figure}
     \centering
     \includegraphics[width = 0.9\columnwidth]{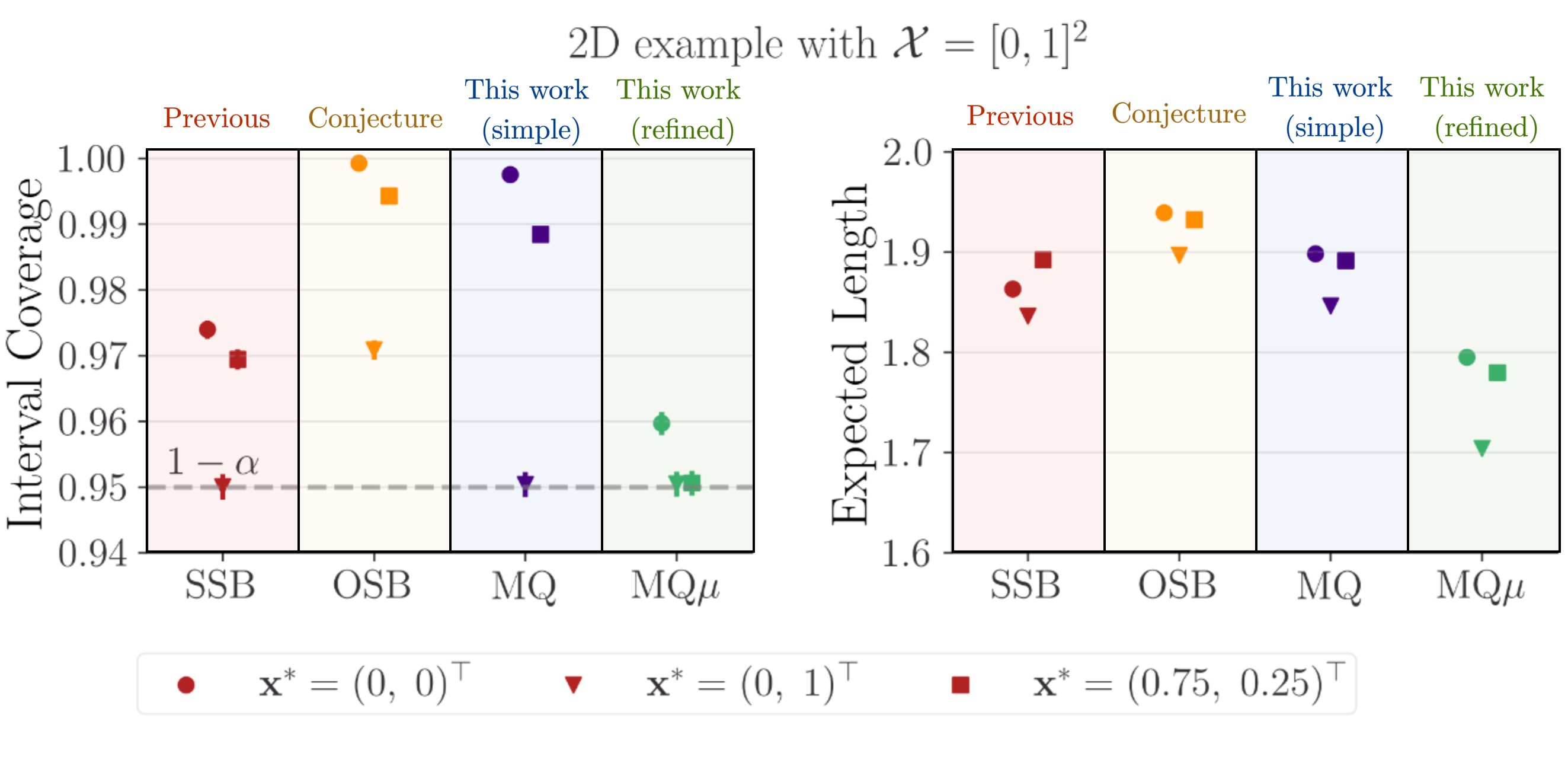}
     \caption{
     Estimated interval coverage \textbf{(left)} and expected lengths \textbf{(right)} for 95\% intervals resulting from the SSB, OSB, MQ and MQ$
    \mu$ methods for the Gaussian linear model in \eqref{eq:lineargaussianmodel} with $\bK = \bI_2$, $\varphi(\bx) = \bh^\tp \bx = x_1 - x_2$ and $\mathcal X = [0,1]^2 \subset \mathbb{R}^2$.
    }
     \label{fig:box_results}
 \end{figure}

\section{Discussion}
\label{sec:discussion_and_conclusion}

This paper presents a framework for constructing confidence intervals with guaranteed frequentist coverage for a given functional of forward model parameters in the presence of constraints.
For the specific case of the Gaussian linear forward model with nonnegativity constraints, we refute the \rustburrus conjecture \cite{burrus1965utilization} by providing a counterexample and propose a more general approach for interval construction. 
Our approach hinges on the inversion of a specific likelihood ratio test, and we offer theoretical and practical insights into the properties of the constructed intervals via illustrative examples.
Our framework is versatile, accommodating potentially nonlinear, non-Gaussian, and rank-deficient settings.

At a high level, the practical effectiveness of UQ methods depends on the (sometimes implicit) assumptions of the method. 
Different methods come into play depending on what we assume or know, be it the likelihood, the constraints, or the prior in Bayesian settings.
In classical statistics, confidence intervals serve as a valuable tool for UQ, especially for one-dimensional quantities of interest. 
These intervals are constructed to offer guaranteed coverage under repeated sampling, aligning with frequentist principles. 
While frequentist coverage guarantees are a useful criterion, especially in contexts where repeatability is essential, we acknowledge that the ``best'' UQ method is often context-dependent. 
For example, this frequentist approach is the most natural in applications like remote sensing (\cite{patil}), where repeatability is a key requirement. 
Conversely, when it is natural to think of the parameter as arising from a prior distribution, Bayesian methods are well-motivated and have desirable properties.

A key aim of this paper is to serve as a basis for the future development of these UQ procedures. 
We conclude this paper by discussing a few possible directions for future work:

\begin{itemize}[leftmargin=7mm]
    \item \textbf{Data-adaptive calibration procedure.}
    We saw in \Cref{sec:numerical-examples} that MQ is valid where OSB is not and can leverage smaller quantiles to produce tighter intervals. 
    In the event that one does not have the assumed true parameter bounding box, it may be possible to create a data-generated one and adjust the error budget accordingly. 
    Such a procedure would enable us to expand the use of the MQ intervals to scenarios with unbounded parameter constraints. 
    We are currently investigating this approach, which will be the subject of a future follow-up paper.

    \item \textbf{Exploration of high-dimensional problems.} 
    A key benefit of the optimization-based interval construction is that it promises to provide a solution to uncertainty quantification in high-dimensional and ultra-high-dimensional problems, where most alternative approaches (including sampling-based ones) are infeasible. 
    For example, in \cite{stanley_unfolding}, we applied a precursor of the methods presented here in a problem where $p=80$, and we are currently exploring the use of these methods in a data-assimilation setting where $p \approx 10^4$. 
    Indeed, optimization is one of the only computational techniques known to work in ultra-high-dimensional problems, such as those in 4DVar data assimilation \cite{kalmikov,ecco,liu_2016}. 
    While optimization has been successfully used for point estimation in such problems, the approaches developed here may enable modifying the existing programs to obtain confidence intervals in addition to point estimates.
    
    \item \textbf{Joint confidence sets for multiple functionals.}
    Since our framework is devised for UQ of a single functional,
    its application to collections of functionals (a higher-dimensional quantity of interest), would be a natural and desirable extension. 
    Trivially, given a collection of $K$ functionals, one could apply this methodology $K$ times and use the Bonferroni correction to adjust the confidence levels so that they all cover at the desired coverage level. 
    Although this approach might be practically reasonable when $K$ is small, it becomes markedly inefficient as $K$ becomes large. 
    Furthermore, this approach would create a $K$-dimensional hyper-rectangle for the quantity of interest, which may not be the optimal geometry for bounding the quantity of interest. 
    As such, extending the framework of \Cref{sec:interval_methodology} to simultaneously consider the $K$ functionals of interest would be the first step to creating a more nuanced approach.
    One way this can be achieved is by appropriately adjusting the definition of $H_0$ in the hypothesis test in \eqref{eq:h_test_oi_full}.
    
    \item \textbf{Choice of test statistics beyond LLR.} 
    The log-likelihood ratio test statistic considered in this work connects with the Rust--Burrus intervals and is observed to perform well in practice, but other choices can be explored in future work. 
    While the LLR is a natural choice for the generic problem, improving the interval length on particular families of problems with different test statistics might be possible. 
    Since the main theoretical machinery comes from the test inversion framework, which is independent of the actual form of the test statistic, alternate versions of \Cref{thm:interval_coverage} can be constructed as long as the test statistic constructs valid level-$\alpha$ hypothesis tests; the resulting intervals of which could be explored theoretically and numerically.
    For instance, the construction of confidence intervals in \Cref{subsec:conf_set_as_opt}, originally written for $\lambda(\mu, \by) = \inf_{\bx \in \Phi_\mu \cap \mathcal{X}} -2\ell_{\bx}(\by) - \inf_{\bx \in \mathcal{X}} -2 \ell_{\bx}(\by)$, readily generalizes to test statistics of the form $\lambda_{f,g}(\mu, \by) = \inf_{\bx \in \Phi_\mu \cap \mathcal{X}} f(\bx, \by) - g(\by)$. 
    In that case, \eqref{eq:defD} becomes $\{\bx: f(\bx, \by) \leq q_\alpha(\varphi(\bx)) + g(\by)\}$, where $q_\alpha$ must be valid for the particular choice of $f$ and $g$, and can be obtained by analyzing the distribution of $\lambda_{f,g}$.
        
    \item \textbf{Generalization to simulation-based problems.}
    An extension of our methodology to settings in which the likelihood is not exactly known can be considered, ranging from only partial knowledge of the form of the likelihood to full simulation-based (likelihood-free) settings where the likelihood is not known explicitly but can be sampled from. 
    A possible avenue is to develop robust worst-case approaches with respect to possible likelihoods. 
    In a fully likelihood-free setting, approaches such as \cite{dalmasso2020confidence, heinrich2022learning, waldo,dalmasso2021likelihood} provide ways to invert hypothesis tests to obtain confidence sets in these scenarios in multidimensional parameter spaces. 
    Projections of these sets could produce confidence intervals for a functional of the model parameters, as seen for the SSB intervals. 
    However, as we have explored, orienting the hypothesis test to the given functional of interest can have dramatic length benefits for the resulting confidence interval (as seen for the OSB, HSB, and MQ intervals). 
    Since the log-likelihood plays a key role in the definition of our intervals, extensions providing ways to relax that dependence would be a necessary first step.
\end{itemize}


\section*{Acknowledgments}

We warmly thank Larry Wasserman, Ann Lee, and other members of the STAMPS (Statistical Methods in the Physical Sciences) working group at Carnegie Mellon University for fruitful discussions on this work.
We also warmly thank Andrew Stuart, George Karniadakis, and their groups at the California Institute of Technology and Brown University for useful comments and stimulating discussions.
We are also grateful to Jon Hobbs and Amy Braverman at the Jet Propulsion Laboratory at the National Aeronautics and Space Administration for many discussions related to this work, and for hosting visits of PP, MS, and MK, and for their kind hospitality.

PB and HO gratefully acknowledge support from the Air Force Office of Scientific Research under MURI award number FA9550-20-1-0358 (Machine Learning and Physics-Based Modeling and Simulation), the Jet Propulsion Laboratory, California Institute of Technology, under a contract with the National Aeronautics and Space Administration, and Beyond Limits (Learning Optimal Models) through CAST (The Caltech Center for Autonomous Systems and Technologies). MS and MK were supported by NSF grants DMS-2053804 and PHY-2020295, JPL RSAs No.\ 1670375, 1689177 \& 1704914 and a grant from the C3.AI Digital Transformation Institute.

\bibliographystyle{apalike}
\bibliography{references}

\begin{thebibliography}{}

\bibitem[Bajgiran et~al., 2022]{Bajgiran2022}
Bajgiran, H.~H., Batlle, P., Owhadi, H., Samir, M., Scovel, C., Shirdel, M.,
  Stanley, M., and Tavallali, P. (2022).
\newblock Uncertainty quantification of the 4th kind: {O}ptimal posterior
  accuracy-uncertainty tradeoff with the minimum enclosing ball.
\newblock {\em Journal of Computational Physics}, 471.

\bibitem[Boucheron and Thomas, 2012]{10.1214/ECP.v17-2210}
Boucheron, S. and Thomas, M. (2012).
\newblock {Concentration inequalities for order statistics}.
\newblock {\em Electronic Communications in Probability}, 17:1--12.

\bibitem[Burrus, 1965]{burrus1965utilization}
Burrus, W.~R. (1965).
\newblock {\em Utilization of a Priori Information by Means of Mathematical
  Programming in the Statistical Interpretation of Measured Distributions}.
\newblock Oak Ridge National Laboratory.

\bibitem[Casella and Berger, 2002]{casella_berger}
Casella, G. and Berger, R.~L. (2002).
\newblock {\em Statistical Inference}.
\newblock Duxbery, second edition.

\bibitem[Cash, 1979]{cash1979parameter}
Cash, W. (1979).
\newblock Parameter estimation in astronomy through application of the
  likelihood ratio.
\newblock {\em The Astrophysical Journal}, 228:939--947.

\bibitem[Cranmer et~al., 2020]{cranmer2020frontier}
Cranmer, K., Brehmer, J., and Louppe, G. (2020).
\newblock The frontier of simulation-based inference.
\newblock {\em Proceedings of the National Academy of Sciences},
  117(48):30055--30062.

\bibitem[Cranmer et~al., 2016]{cranmer2016approximating}
Cranmer, K., Pavez, J., and Louppe, G. (2016).
\newblock Approximating likelihood ratios with calibrated discriminative
  classifiers.
\newblock arXiv:1506.02169 [stat.AP].

\bibitem[Dalmasso et~al., 2020]{dalmasso2020confidence}
Dalmasso, N., Izbicki, R., and Lee, A.~B. (2020).
\newblock Confidence sets and hypothesis testing in a likelihood-free inference
  setting.
\newblock In {\em International Conference on Machine Learning}, pages
  2323--2334.

\bibitem[Dalmasso et~al., 2023]{dalmasso2021likelihood}
Dalmasso, N., Masserano, L., Zhao, D., Izbicki, R., and Lee, A.~B. (2023).
\newblock Likelihood-free frequentist inference: {B}ridging classical
  statistics and machine learning for reliable simulator-based inference.
\newblock {\em arXiv preprint arXiv:2107.03920}.

\bibitem[Donoho, 1994]{donoho1994statistical}
Donoho, D.~L. (1994).
\newblock Statistical estimation and optimal recovery.
\newblock {\em The Annals of Statistics}, 22(1):238--270.

\bibitem[Feldman and Cousins, 1998]{Feldman_1998}
Feldman, G.~J. and Cousins, R.~D. (1998).
\newblock Unified approach to the classical statistical analysis of small
  signals.
\newblock {\em Physical Review D}, 57(7):3873--3889.

\bibitem[Fisher et~al., 2020]{fisher2020efficient}
Fisher, E., Schweiger, R., and Rosset, S. (2020).
\newblock Efficient construction of test inversion confidence intervals using
  quantile regression.
\newblock {\em Journal of Computational and Graphical Statistics},
  29(1):140--148.

\bibitem[Forget et~al., 2015]{ecco}
Forget, G., Campin, J., Heimbach, P., Hill, C.~N., Ponte, R.~M., and Wunsch, C.
  (2015).
\newblock {ECCO} version 4: an integrated framework for non-linear inverse
  modeling and global ocean state estimation.
\newblock {\em Geoscientific Model Development}.

\bibitem[Garthwaite and Buckland, 1992]{garthwaite_buckland}
Garthwaite, P.~H. and Buckland, S.~T. (1992).
\newblock Generating {M}onte {C}arlo confidence intervals by the
  {R}obbins--{M}onro process.
\newblock {\em Journal of the Royal Statistical Society}, 41(1):159--171.

\bibitem[Geng and Xie, 2019]{Geng2019}
Geng, X. and Xie, L. (2019).
\newblock Data-driven decision making in power systems with probabilistic
  guarantees: Theory and applications of chance-constrained optimization.
\newblock {\em Annual Reviews in Control}, 47:341–363.

\bibitem[Gouriéroux et~al., 1982]{gourieroux}
Gouriéroux, C., Holly, A., and Monfort, A. (1982).
\newblock Likelihood ratio test, {W}ald test, and {K}uhn--{T}ucker test in
  linear models with inequality constraints on the regression parameters.
\newblock {\em Econometrica}, 50:63--80.

\bibitem[Gutmann and Corander, 2015]{LikFree1}
Gutmann, M.~U. and Corander, J. (2015).
\newblock {B}ayesian optimization for likelihood-free inference of
  simulator-based statistical models.

\bibitem[Hahn and Meeker, 1991]{hahn_meeker}
Hahn, G.~J. and Meeker, W.~Q. (1991).
\newblock {\em Statistical Intervals}.
\newblock Wiley.

\bibitem[Hansen, 1992]{hansen1992analysis}
Hansen, P.~C. (1992).
\newblock Analysis of discrete ill-posed problems by means of the {L}-curve.
\newblock {\em SIAM Review}, 34(4):561--580.

\bibitem[Hansen and O’Leary, 1993]{hansen1993use}
Hansen, P.~C. and O’Leary, D.~P. (1993).
\newblock The use of the {L}-curve in the regularization of discrete ill-posed
  problems.
\newblock {\em SIAM Journal on Scientific Computing}, 14(6):1487--1503.

\bibitem[Heinrich, 2022]{heinrich2022learning}
Heinrich, L. (2022).
\newblock Learning optimal test statistics in the presence of nuisance
  parameters.
\newblock {\em arXiv preprint arXiv:2203.13079}.

\bibitem[Javanmard and Montanari, 2014]{javanmard2014}
Javanmard, A. and Montanari, A. (2014).
\newblock Confidence intervals and hypothesis testing for high-dimensional
  regression.
\newblock {\em Journal of Machine Learning Research}, 15(82):2869--2909.

\bibitem[Kalmikov and Heimbach, 2014]{kalmikov}
Kalmikov, A.~G. and Heimbach, P. (2014).
\newblock A {H}essian-based method for uncertainty quantification in global
  ocean state estimation.
\newblock {\em SIAM Journal on Scientific Computing}, 36:S267 -- S295.

\bibitem[Kuusela, 2016]{kuusela_phd_thesis}
Kuusela, M. (2016).
\newblock {\em Uncertainty quantification in unfolding elementary particle
  spectra at the Large Hadron Collider}.
\newblock PhD thesis, École Polytechnique Fédérale de Lausanne.

\bibitem[Kuusela and Panaretos, 2015]{kuusela2015}
Kuusela, M. and Panaretos, V.~M. (2015).
\newblock Statistical unfolding of elementary particle spectra: {E}mpirical
  {B}ayes estimation and bias-corrected uncertainty quantification.
\newblock {\em The Annals of Applied Statistics}, 9(3):1671--1705.

\bibitem[Kuusela and Stark, 2017]{kuusela2017shape}
Kuusela, M. and Stark, P.~B. (2017).
\newblock Shape-constrained uncertainty quantification in unfolding steeply
  falling elementary particle spectra.
\newblock {\em Annals of Applied Statistics}.

\bibitem[Lehmann and Romano, 2008]{lehmann_romano}
Lehmann, E.~L. and Romano, J.~P. (2008).
\newblock {\em Testing Statistical Hypotheses}.
\newblock Springer.

\bibitem[Liu et~al., 2016]{liu_2016}
Liu, J., Bowman, K., Meemong, L., et~al. (2016).
\newblock Carbon monitoring system flux estimation and attribution: impact of
  {ACOS}-{GOSAT X}$_{CO_2}$ sampling on the inference of terrestrial biospheric
  sources and sinks.
\newblock {\em Tellus B: Chemical and Physical Meteorology}, 66.

\bibitem[Masserano et~al., 2023]{waldo}
Masserano, L., Dorigo, T., Izbicki, R., Kuusela, M., and Lee, A. (2023).
\newblock Simulation-based inference with {WALDO}: Confidence regions by
  leveraging prediction algorithms or posterior estimators for inverse
  problems.
\newblock In {\em The International Conference on Artificial Intelligence and
  Statistics}.

\bibitem[Micchelli and Rivlin, 1977]{micchelli1977survey}
Micchelli, C.~A. and Rivlin, T.~J. (1977).
\newblock A survey of optimal recovery.
\newblock {\em Optimal estimation in approximation theory}, pages 1--54.

\bibitem[Molenberghs and Verbeke, 2007]{molenberghs2007likelihood}
Molenberghs, G. and Verbeke, G. (2007).
\newblock Likelihood ratio, score, and {W}ald tests in a constrained parameter
  space.
\newblock {\em The American Statistician}, 61(1):22--27.

\bibitem[Murphy, 1995]{LikTestInv}
Murphy, S.~A. (1995).
\newblock Likelihood ratio-based confidence intervals in survival analysis.
\newblock {\em Journal of the American Statistical Association},
  90(432):1399--1405.

\bibitem[Neale and Miller, 1997]{neale1997use}
Neale, M.~C. and Miller, M.~B. (1997).
\newblock The use of likelihood-based confidence intervals in genetic models.
\newblock {\em Behavior genetics}, 27:113--120.

\bibitem[O'Leary and Rust, 1986]{oleary_rust}
O'Leary, D.~P. and Rust, B.~W. (1986).
\newblock Confidence intervals for inequality-constrained least squares
  problems, with applications to ill-posed problems.
\newblock {\em SIAM Journal on Scientific Computing}, 7(2):473--489.

\bibitem[Owhadi and Scovel, 2017]{owhadi2017qualitative}
Owhadi, H. and Scovel, C. (2017).
\newblock Qualitative robustness in {B}ayesian inference.
\newblock {\em ESAIM: Probability and Statistics}, 21:251--274.

\bibitem[Owhadi et~al., 2015a]{owhadi2015brittlenessb}
Owhadi, H., Scovel, C., and Sullivan, T. (2015a).
\newblock Brittleness of {B}ayesian inference under finite information in a
  continuous world.
\newblock {\em Electronic Journal of Statistics}, 9:1--79.

\bibitem[Owhadi et~al., 2015b]{owhadi2015brittlenessa}
Owhadi, H., Scovel, C., and Sullivan, T. (2015b).
\newblock On the brittleness of {B}ayesian inference.
\newblock {\em SIAM Review}, 57(4):566--582.

\bibitem[Owhadi et~al., 2013]{owhadiOUQ}
Owhadi, H., Scovel, C., Sullivan, T.~J., McKerns, M., and Ortiz, M. (2013).
\newblock Optimal uncertainty quantification.
\newblock {\em SIAM Review}, 55(2):271--345.

\bibitem[Panaretos, 2016]{Panaretos2016}
Panaretos, V.~M. (2016).
\newblock {\em Statistics for Mathematicians}.
\newblock Springer.

\bibitem[Patil et~al., 2022]{patil}
Patil, P., Kuusela, M., and Hobbs, J. (2022).
\newblock Objective frequentist uncertainty quantification for atmospheric
  {CO}$_2$ retrievals.
\newblock {\em SIAM/ASA Journal on Uncertainty Quantification}, 10.

\bibitem[Robertson et~al., 1988]{robertson}
Robertson, T., Wright, F.~T., and Dykstra, R. (1988).
\newblock {\em Order Restricted Statistical Inference}.
\newblock Wiley.

\bibitem[Roch, 2024]{ProbabilityBook}
Roch, S. (2024).
\newblock {\em Modern Discrete Probability: An Essential Toolkit}.
\newblock To be published by Cambridge University Press.

\bibitem[Rust and Burrus, 1972]{rust_burrus}
Rust, B.~W. and Burrus, W.~R. (1972).
\newblock {\em Mathematical Programming and the Numerical Solution of Linear
  Equations}.
\newblock American Elsevier.

\bibitem[Rust and O'Leary, 1994]{rust1994confidence}
Rust, B.~W. and O'Leary, D.~P. (1994).
\newblock Confidence intervals for discrete approximations to ill-posed
  problems.
\newblock {\em Journal of Computational and Graphical Statistics}, 3(1):67--96.

\bibitem[Schafer and Stark, 2009]{schafer2009constructing}
Schafer, C.~M. and Stark, P.~B. (2009).
\newblock Constructing confidence regions of optimal expected size.
\newblock {\em Journal of the American Statistical Association},
  104(487):1080--1089.

\bibitem[Schervish, 1995]{schervish}
Schervish, M.~J. (1995).
\newblock {\em Theory of Statistics}.
\newblock Springer.

\bibitem[Schweiger et~al., 2018]{schweiger}
Schweiger, R., Fisher, E., Rahmani, E., Shenhav, L., Rosset, S., and Halperin,
  E. (2018).
\newblock Using stochastic approximation techniques to efficiently construct
  confidence intervals for heritability.
\newblock {\em Journal of Computational Biology}, 25(7):794--808.

\bibitem[Shaked and Shanthikumar, 2007]{shaked2007stochastic}
Shaked, M. and Shanthikumar, J. (2007).
\newblock {\em Stochastic Orders}.
\newblock Springer Series in Statistics. Springer New York.

\bibitem[Shapiro, 1988]{shapiro1988towards}
Shapiro, A. (1988).
\newblock Towards a unified theory of inequality constrained testing in
  multivariate analysis.
\newblock {\em International Statistical Review}, pages 49--62.

\bibitem[Silvapulle and Sen, 2011]{silvapulle2011constrained}
Silvapulle, M.~J. and Sen, P.~K. (2011).
\newblock {\em Constrained Statistical Inference: Order, Inequality, and Shape
  Constraints}.
\newblock Wiley.

\bibitem[Stanley et~al., 2022]{stanley_unfolding}
Stanley, M., Patil, P., and Kuusela, M. (2022).
\newblock Uncertainty quantification for wide-bin unfolding: one-at-a-time
  strict bounds and prior-optimized confidence intervals.
\newblock {\em Journal of Instrumentation}, 17.

\bibitem[Stark, 1992a]{stark_strict_bounds}
Stark, P.~B. (1992a).
\newblock Inference in infinite-dimensional inverse problems: Discretization
  and duality.
\newblock {\em Journal of Geophysical Research}, 97(B10):14055--14082.

\bibitem[Stark, 1992b]{stark1992inference}
Stark, P.~B. (1992b).
\newblock Inference in infinite-dimensional inverse problems: discretization
  and duality.
\newblock {\em Journal of Geophysical Research: Solid Earth},
  97(B10):14055--14082.

\bibitem[Stark, 1994]{stark1994simultaneous}
Stark, P.~B. (1994).
\newblock Simultaneous confidence intervals for linear estimates of linear
  functionals.
\newblock Technical report, Citeseer.

\bibitem[Stark, 2015]{stark_constraints_priors}
Stark, P.~B. (2015).
\newblock Constraints versus priors.
\newblock {\em SIAM/ASA Journal on Uncertainty Quantification}, 3.

\bibitem[Stuart, 2010]{stuart2010inverse}
Stuart, A.~M. (2010).
\newblock Inverse problems: {A} {B}ayesian perspective.
\newblock {\em Acta Numerica}, 19:451--559.

\bibitem[Tarantola, 2005]{tarantola2005inverse}
Tarantola, A. (2005).
\newblock {\em Inverse problem theory and methods for model parameter
  estimation}.
\newblock SIAM.

\bibitem[Tenorio et~al., 2007]{tenorio2007confidence}
Tenorio, L., Fleck, A., and Moses, K. (2007).
\newblock Confidence intervals for linear discrete inverse problems with a
  non-negativity constraint.
\newblock {\em Inverse Problems}, 23(2):669.

\bibitem[Thomas et~al., 2022]{LikFree2}
Thomas, O., Dutta, R., Corander, J., Kaski, S., and Gutmann, M.~U. (2022).
\newblock {Likelihood-Free Inference by Ratio Estimation}.
\newblock {\em Bayesian Analysis}, 17(1):1--31.

\bibitem[Venzon and Moolgavkar, 1988]{venzon1988method}
Venzon, D.~J. and Moolgavkar, S.~H. (1988).
\newblock A method for computing profile-likelihood-based confidence intervals.
\newblock {\em Journal of the Royal Statistical Society: Series C},
  37(1):87--94.

\bibitem[Wald, 1943]{wald_1943}
Wald, A. (1943).
\newblock Tests of statistical hypotheses concerning several parameters when
  the number of observations is large.
\newblock {\em Transactions of the American Mathematical Society}, 54:426--482.

\bibitem[Wasserman, 2004]{wasserman2004all}
Wasserman, L. (2004).
\newblock {\em All of Statistics: A Concise Course in Statistical Inference}.
\newblock Springer.

\bibitem[Wasserman et~al., 2020]{universal_inference}
Wasserman, L., Ramdas, A., and Balakrishnan, S. (2020).
\newblock Universal inference.
\newblock {\em PNAS}, 117:16880--16890.

\bibitem[Wolak, 1987]{wolak87}
Wolak, F.~A. (1987).
\newblock An exact test for multiple inequality and equality constraints in the
  linear regression model.
\newblock {\em Journal of the American Statistical Association}, 82:782--793.

\bibitem[Wolak, 1989]{wolak89}
Wolak, F.~A. (1989).
\newblock Testing inequality constraints in linear econometric models.
\newblock {\em Journal of Econometrics}, 41:205--235.

\bibitem[Yu et~al., 2019]{yu2019constrained}
Yu, M., Gupta, V., and Kolar, M. (2019).
\newblock Constrained high dimensional statistical inference.
\newblock {\em arXiv preprint arXiv:1911.07319}.

\end{thebibliography}

\clearpage
\appendix

\newcommand{\removelinebreaks}[1]{%
      \def\\{\relax}#1}
\def\titleRLB{\removelinebreaks{\titletext}}

\begin{center}
\Large
{
{\bf
\underline{Supplementary Material}}
}
\end{center}

\bigskip

This document serves as a supplement to the paper entitled ``Optimization-based frequentist confidence intervals for functionals in constrained inverse problems: Resolving the \rustburrus conjecture''.
We outline below the structure of the supplement and summarize key notation used both in this supplement and the main paper.

\subsection*{Organization}

The organization this supplement is summarized in \Cref{tab:supplement-organization}.

\begin{table}[!ht]
\centering
\begin{tabularx}{\textwidth}{L{2.05cm}L{15cm}}
\toprule
\textbf{Appendix} & \textbf{Description} \\
\midrule
\Cref{app:sec:data_gen_test_def_inv_arg_proofs}  & Proofs of \Cref{lem:conf_set_coverage}, \Cref{thm:interval_coverage}, 
\Cref{thm:interval_coverage_converse}, and \Cref{lemma:1d_cdf_part1,lemma:unconstrchi2}  
\\ 
\addlinespace[0.5ex] \arrayrulecolor{black!25}  \midrule
\Cref{app:sec:interval_methodology_proofs} & Proofs of \Cref{lemma:SD,lemma:1d_cdf} 
\\
\arrayrulecolor{black!25} \midrule \addlinespace[0.5ex] \arrayrulecolor{black!25}
\Cref{app:sec:rust_burrus_proofs} & Proofs of \Cref{thm:burrus_false}, \Cref{lemma:burruschi2,lemma:burruschi2B,lemma:invalidity-counterexample,lem:3d-counterexample}, and \Cref{lemma:dimbounding} 
\\
\arrayrulecolor{black!25} \midrule \addlinespace[0.5ex] \arrayrulecolor{black!25}
\Cref{app:sec:numerical-examples} & Additional numerical illustrations in \Cref{sec:numerical-examples}
\\
\addlinespace[0.5ex] \arrayrulecolor{black} \bottomrule
\end{tabularx}
\caption{Roadmap of the supplement.}
\label{tab:supplement-organization}
\end{table}

\subsection*{Notation}

Some of the general notation used throughout this paper summarized in \Cref{tab:notations}.
(Any specific notation needed is explained in respective sections as necessary.)

\begin{table}[ht]
\centering
\begin{tabularx}{\textwidth}{L{2.75cm}L{12.5cm}}
\toprule
\textbf{Notation} & \textbf{Description} \\
\midrule
Non-bold lower or upper case & Denotes scalars (e.g., $\alpha$, $\mu$, $Q$). \\
Bold lower case & Denotes vectors (e.g., $\bx$, $\by$, $\bh$). \\
Bold upper case & Denotes matrices (e.g., $\bK$, $\bI$). \\
Calligraphic font & Denotes sets (e.g., $\mathcal{X}$, $\mathcal{C}$, $\mathcal{D}$). \\
\arrayrulecolor{black!25}\midrule
$\mathbb{R}$ & Set of real numbers. \\
$\mathbb{R}_{\ge 0}$ & Set of non-negative real numbers. \\
$[n]$ & Set $\{1, \dots, n\}$ for a positive integer $n$. \\
$(x, y, z)$ & (Ordered) tuple of elements $x$, $y$, $z$. \\
$\{x, y, z\}$ & Set of elements $x$, $y$, $z$. \\
\midrule
$\lceil x \rceil$, $\lfloor x \rfloor$,  $(x)_{+}$ & Ceiling, floor, and positive part of a real number $x$. \\
$\bm{1}\{A\}$, $\PP(A)$ & Indicator random variable associated with an event $A$ and probability of $A$. \\
$\EE[X], \Var(X)$ & Expectation and variance of a random variable $X$. \\
\midrule
$\bA^{-1}$ & Inverse of a square invertible matrix $\bA \in \RR^{m \times m}$. \\
$\bb^\top$, $\bB^\top$ & Transpose of a column vector $\bb \in \RR^{m}$ and a rectangular matrix $\bB \in \RR^{m \times p}$. \\
$\mathrm{rank}(\bC)$ & Rank of a matrix $\bC \in \RR^{m \times p}$ \\
$\mathrm{null}(\bC)$ & Nullity of a matrix $\bC \in \RR^{m \times p}$ \\
$\bD^{1/2}$ & Principal square root of a positive semidefinite matrix $\bD \in \RR^{p \times p}$.  \\
$\bI$, $\bm{1}$, $\bm{0}$ & The identity matrix, the all ones vector, the all zeros vector of appropriate dimensions \\
\midrule
$\langle \mathbf{u}, \mathbf{v} \rangle$ & The inner product of vectors $\mathbf{u}$ and $\mathbf{v}$. \\
$\| \bu \|_{\bD}$ & The induced $\ell_2$ norm of a vector $\bu$ with respect to a positive semidefinite matrix $\bD$ \\
$\| \mathbf{u} \|_{q}$ & The $\ell_q$ norm of a vector $\mathbf{u}$ for $q \ge 1$. \\
$\| f \|_{L_q}$ & The $L_q$ norm of a function $f$ for $q \ge 1$. \\
\midrule
$\bu \le \bv$ & Lexicographic ordering for vectors $\bu$ and $\bv$. \\
$\bm{A} \preceq \bm{B}$ & Loewner ordering for symmetric matrices $\bm{A}$ and $\bm{B}$.\\
$X \preceq Y$ & Stochastic dominance order for random variables $X$ and $Y$ (see \Cref{subsec:stochastic_dominance} for details). \\
\midrule
$Y = \cO_\alpha(X)$ & Deterministic big-O notation, indicating that $Y$ is bounded by $| Y | \le C_\alpha X$ for some constant $C_\alpha$ that may depend on the ambient parameter $\alpha$. \\
$\cO_p$ & Probabilistic big-O notation. \\
$\dto$ & Convergence in distribution. \\
$\pto$ & Convergence in probability. \\
\arrayrulecolor{black}\bottomrule
\end{tabularx}
\caption{Summary of general notation used throughout the paper and the supplement.}
\label{tab:notations}
\end{table}

A note about $\min/\max$ versus $\inf/\sup$:
We use $\min/\max$ when the optimal value of an optimization problem is attained, otherwise we use $\inf/\sup$.

A note about uniqueness of optimization problems: 
When we express an equality involving the minimizer of an optimization problem, this is intended to signify a set inclusion. 
The guarantees presented in our paper are applicable to all solutions, and we make no distinction among multiple solutions.

\clearpage
\section{Proofs in \Cref{sec:data_gen_test_def_inv_arg}}
\label{app:sec:data_gen_test_def_inv_arg_proofs}

\subsection{Proof of \Cref{lem:conf_set_coverage}} \label{proof:conf_set_coverage}

To prove the lemma, we need to show that the probability of \( \mu^* \) being in the confidence set \( \cC_\alpha(\by) \) is at least \( 1 - \alpha \). 
Towards this end, observe that
\begin{align*}
    \mathbb{P}_{\by \sim P_{\bx^*}} (\mu^* \in \cC_\alpha(\by)) &= \mathbb{P}_{\by \sim P_{\bx^*}} (\by \in A_\alpha(\mu^*))  \\
    &= 1 - \mathbb{P}_{\by \sim P_{\bx^*}} (\by \notin A_\alpha(\mu^*)) \\
    &\geq 1 - \sup_{\bx \in \Phi_{\mu^*} \cap \mathcal{X}} \mathbb{P}_{\by \sim P_{\bx}} (\by \notin A_\alpha(\mu^*))  \\
    &\geq 1 - \alpha,
\end{align*}
as desired.
This completes the proof.

\subsection{Proof of \Cref{lemma:quantile}} \label{proof:lemma:quantile}

The value $\maxQ_\mu$ is a valid decision value for any given $\mu$, since for all $\bx \in \Phi_\mu \cap \mathcal X$ it holds that
\begin{equation}\label{eq:quantileproof-proof}
    \mathbb{P}_{\lambda \sim F_{\bx}} \left(\lambda > \sup_{\bx' \in \Phi_\mu \cap \mathcal{X}}Q_{F_{\bx'}}(1-\alpha) \right) \leq  \mathbb{P}_{\lambda \sim F_{\bx}} \left(\lambda > Q_{F_\bx}(1-\alpha) \right) = \alpha.
\end{equation}
For any $v < \maxQ_\mu$ that one could use as a decision value, there exists $\tilde{\bx} \in \Phi_\mu \cap \mathcal X$ such that $Q_{F_{\tilde{\bx}}}(1-\alpha) > v$. 
Therefore, $\mathbb{P}_{\lambda \sim F_{\tilde{\bx}}} (\lambda > v) > \alpha$, and thus $v$ is not a valid decision value. 
$\maxQ$ is clearly valid for all $\mu$, and a similar argument shows that choosing any smaller $v$ would make it not valid, as there exists a $\tilde{\bx} \in \mathcal{X}$ with a larger quantile than $v$, making $v$ invalid as a decision value for $\mu = \varphi(\tilde{\bx})$. 
The equality between the two formulations comes from the same argument that shows \eqref{eq:combined_type_1} is equivalent to \eqref{eq:combined_type_1_better}.

\subsection{Proof of \Cref{thm:interval_coverage}}\label{proof:main}
Assume $\bar{\mathcal X}_\alpha(\by)$ is nonempty and write as shorthand $\inf_{\bx \in \mathcal X}/\sup_{\bx \in \mathcal X} f(\bx)$ for the interval
\[
    \left[\inf_{\bx \in \mathcal X} f(\bx), \; \sup_{\bx \in \mathcal X} f(\bx)\right].
\]
Observe that
\begin{equation} 
    \cC_\alpha(\by) \subseteq \inf_{\mu \in \cC_\alpha(\by)}/\sup_{\mu \in \cC_\alpha(\by)} \mu.
\end{equation}
From \Cref{lem:conf_set_coverage}, $\cC_\alpha(\by) \subseteq \cI_\alpha(\by)$ implies that $\cI_\alpha(\by)$ is also a $1 - \alpha$ confidence interval. 
We prove this interval exactly equals the defined $\cI_\alpha(\by)$ in \eqref{intervaldef}. 
Unpacking the definition of $\cC_\alpha(\by)$, we write the interval
\begin{equation}\label{opt1}
\begin{aligned}
\inf_\mu/\sup_{\mu} \quad & \mu \\
\st \quad & \mu \in \mathbb{R} \\ & -2\log\Lambda(\mu, \by) \leq q_\alpha(\mu).
\end{aligned}
\end{equation}
We can write different optimization problems which are equivalent to the optimization problem \eqref{opt1}. 
First, we use the definition of $\Lambda$ to write:
\begin{equation}
\label{opt2}
\begin{aligned}
\inf_\mu/\sup_{\mu} \quad & \mu \\
\st \quad & \mu \in \mathbb{R} \\ & \inf_{\varphi(\bx) = \mu, \bx \in \mathcal X} -2\ell_{\bx}(\by) - \inf_{\bx \in \mathcal X} -2\ell_{\bx}(\by)\leq q_\alpha(\mu) .
\end{aligned}
\end{equation}
Notice that we can rewrite the feasibility condition of $\mu$ as follows:
\[
\inf_{\varphi(\bx) = \mu, \bx \in \mathcal X} -2\ell_{\bx}(\by)\leq q_\alpha(\mu) + \inf_{\bx \in \mathcal X} -2\ell_{\bx}(\by)
\]
as there exists $\bx \in \mathcal X$ such that $\varphi(\bx) = \mu$ and 
\[
-2\ell_{\bx}(\by) \leq q_\alpha(\mu) + \inf_{\bx \in \mathcal X} -2\ell_{\bx}(\by).
\]
Therefore, the optimization problem can be rewritten with $\bx$ and $\mu$ as the optimization variables:
\begin{equation}
\label{opt3}
\begin{aligned}
\inf_{\mu, \bx}/\sup_{\mu, \bx} \quad & \mu \\
    \st \quad & \bx \in \mathcal X,~ \mu \in \mathbb{R} \\
& \varphi(\bx) = \mu \\
& -2\ell_{\bx}(\by) \leq q_\alpha(\mu) + \inf_{\bx \in \mathcal X} -2\ell_{\bx}(\by).
\end{aligned}
\end{equation}
And $\mu$ can be eliminated using the constraint, yielding
\begin{equation}\label{param_space}
\begin{aligned}
\inf_{\bx}/\sup_{\bx} \quad & \varphi(\bx) \\
\st \quad & \bx \in \mathcal X\\
& -2\ell_{\bx}(\by) \leq q_\alpha(\varphi(\bx))+ \inf_{\bx \in \mathcal X} -2\ell_{\bx}(\by),
\end{aligned}
\end{equation}
that is, $\inf_{\bx \in  \bar{\mathcal X}_\alpha(\by)}/\sup_{\bx \in  \bar{\mathcal X}_\alpha(\by)} \varphi(\bx)$. 
The choice when $\bar{\mathcal X}_\alpha(\by)$ is empty does not affect coverage properties. 
An alternative proof comes by observing the set equality $\varphi(\{\bx \in \mathcal{X}: -2\ell_{\bx}(\by) \leq q_\alpha(\varphi(\bx))+ \inf_{\bx \in \mathcal X} -2\ell_{\bx}(\by)\}) = \{\mu \in \varphi(\mathcal{X}):  \inf_{\varphi(\bx) = \mu, \bx \in \mathcal X} -2\ell_{\bx}(\by) - \inf_{\bx \in \mathcal X} -2\ell_{\bx}(\by)\leq q_\alpha(\mu)\}$, and the result follows. 
This finishes the proof.

\subsection{Proof of \Cref{thm:interval_coverage_converse}}\label{proof:main_converse}

We have, by definition and test inversion, that $q_\alpha(\mu)$ are valid if and only if 
\[
    \cC_\alpha(\by) := \{\mu \colon \lambda(\mu, \by) \leq q_\alpha(\mu)\}
\]
is a valid $1-\alpha$ confidence interval for any $\bx \in \mathcal{X}$. 
Since $\cI_\alpha(\by)$ is the smallest interval that contains $\cC_\alpha(\by)$, we aim to prove that  $\cC_\alpha(\by)$ is already an interval (including singletons or empty sets), so that $\cC_\alpha(\by) = \cI_\alpha(\by)$ and the result holds. 
Define the function:
\begin{equation}
    \mu \mapsto \mathcal{F}(\mu) = \inf_{\substack{\varphi(\bx) = \mu \\ \bx \in \mathcal{X}}} -2\ell_\bx(\by)
\end{equation}
for a given $\by$, supported in all $\mu$ such that $\Phi_\mu \cap \mathcal X \neq \emptyset$. 
Write $\cC_\alpha(\by)$ explicitly using \eqref{eq:log_likelihood_ratio_full}, we get
\begin{equation}
    \cC_\alpha(\by) := 
    \Bigg \{\mu \colon  \underset{\substack{\varphi(\bx) = \mu \\ \bx \in \mathcal{X}}} {\text{inf}} \; -2\ell_{\bx}(\by)- \underset{\bx \in \mathcal{X}}{\text{inf}} \; -2 \ell_{\bx}(\by) \leq q_\alpha \Bigg \}.
\end{equation}
The second term on the left-hand side does not depend on $\mu$, so it is enough to prove that any set of the form $\{\mu \colon \mathcal{F}(\mu) \leq z\}$ is an interval, which is implied by the function $\mathcal{F}(\mu)$ being convex in $\mu$ (for a fixed $\by$). 
Indeed, if the set is not an interval, we have $\mu^- < \mu < \mu^+$ with $\mu^-, \mu^+ \in \cC_\alpha(\by)$ and $\mu \notin \cC_\alpha(\by)$ which contradicts convexity, since 
\[
    \mathcal{F}(\mu) \geq z > \gamma \mathcal{F}(\mu^-) + (1-\gamma)\mathcal{F}(\mu^+).
\]

To see convexity and finish the proof, let $\mu_1 \neq \mu_2$ and let $\mathcal{G}(\bx) := -2\ell_\bx(\by)$, a convex function by assumption. 
Write for $i = 1,2$:
\[
    x_i \in \argmin_{\substack{\varphi(\bx) = \mu \\ \bx \in \mathcal{X}}} -2\ell_\bx(\by), 
\]
with $x_i$ being any possible element in the set of minimizers, so that $\mathcal{F}(\mu_i) = \mathcal{G}(\bx_i)$. 
For any $0 < \gamma < 1$, $\gamma \bx_1 + (1-\gamma) \bx_2 \in \mathcal X$ since $\mathcal X$ is a convex cone and 
\[
    \varphi(\gamma \bx_1 + (1-\gamma) \bx_2) = \gamma \mu_1 + (1-\gamma)\mu_2,
\]
since $\varphi$ is linear, so $\gamma \bx_1 + (1-\gamma) \bx_2$ is a feasible point of the optimization problem 
\[
\inf_{\substack{\varphi(\bx) = \gamma \mu_1 + (1-\gamma)\mu_2 \\ \bx \in \mathcal{X}}} -2\ell_\bx(\by),
\]
that has optimal value $\mathcal{F}(\gamma\mu_1 + (1-\gamma)\mu_2)$.
Therefore, by convexity of $\mathcal G$ and definition of the $\bx_i$, we have that:
\begin{equation}
    \mathcal{F}(\gamma\mu_1 + (1-\gamma)\mu_2) \leq \mathcal G(\gamma \bx_1 + (1-\gamma) \bx_2) \leq \gamma \mathcal G(\bx_1) + (1-\gamma) \mathcal{G}(\bx_2) = \gamma \mathcal F(\mu_1) + (1-\gamma) \mathcal{F}(\mu_2).
\end{equation}
This completes the proof.

\subsection{Proof of \Cref{lemma:1d_cdf_part1}}
\label{app:proof-lemma:1d_cdf_part1}

Since the case when $\mu^* = 0$ is of particular interest, we show the result in this specific case and then generalize to the case of $\mu^* > 0$.

\myparagraph{\underline{Case of $\mu^*$ = 0}}
When $\mu^* = 0$, we can argue from symmetry of the standard Gaussian about the origin to write down the CDF in closed form. 
For $c \geq 0$, we have
\begin{align}
    \mathbb{P}_{\mu_0} \left(\ell_0 \leq c \right) &= \mathbb{P}_{\mu_0} \left(\ell_0 \leq c, y < 0 \right) + \mathbb{P}_{\mu_0} \left(\ell_0\leq c, y \geq 0 \right) \nonumber \\
    &= \mathbb{P}_{\mu_0} \left(\ell_0 \leq c \mid y < 0 \right)\mathbb{P}_{\mu_0}(y < 0) + \mathbb{P}_{\mu_0} \left(\ell_0 \leq c \mid y \geq 0 \right)\mathbb{P}_{\mu_0}(y \geq 0). \label{eq:tot_prob_form}
\end{align}
By definition, $\mathbb{P}_{\mu_0}(y < 0) = \mathbb{P}_{\mu_0}(y \geq 0) = \frac{1}{2}$, so only the conditional probabilities remain. 
By \eqref{eq:1d_log_lr}, we have
\begin{align}
    \mathbb{P}_{\mu_0} \left(\ell_0 \leq c \mid y < 0 \right) &= \mathbb{P}_{\mu_0} \left(0 \leq c \mid y < 0 \right) = 1 \nonumber \\
    \mathbb{P}_{\mu_0} \left(\ell_0 \leq c \mid y \geq 0 \right) &= \mathbb{P}_{\mu_0} \left(y^2 \leq c \mid y \geq 0 \right). \label{eq:cond_cdf_2}
\end{align}
In \eqref{eq:cond_cdf_2}, we immediately observe that
\begin{equation}
    \mathbb{P}_{\mu_0} \left(y^2 \leq c \mid y \geq 0 \right) = \mathbb{P}_{\mu_0} \left(y^2 \leq c, y \geq 0 \right) \mathbb{P}_{\mu_0} \left(y \geq 0 \right)^{-1} = 2 \mathbb{P}_{\mu_0} \left(0 \leq y \leq \sqrt{c} \right) = 2 \Phi(\sqrt{c}) - 1.
\end{equation}
But we also have that 
\[
\mathbb{P}_{\mu_0} \left(y^2 \leq c \right) = \mathbb{P}_{\mu_0} \left(-\sqrt{c} \leq y \leq \sqrt{c} \right) = 2 \Phi(\sqrt{c}) - 1.
\]
So we have 
\[
\mathbb{P}_{\mu_0} \left(y^2 \leq c \mid y \geq 0 \right) = \mathbb{P}_{\mu_0} \left(y^2 \leq c \right).
\]
Hence, we obtain
\[
\mathbb{P}_{\mu_0} \left(\ell_0 \leq c \mid y \geq 0 \right) = \chi^2_1(c).
\]
Note this independence on the sign of $y$ means that the magnitude of $y$ is statistically independent of its direction. 
Thus, when $\mu_0 = 0$, the log-likelihood ratio has the following distribution:
\begin{equation}
    \ell_0 \sim \frac{1}{2} + \frac{1}{2} \chi^2_1.
\end{equation}
This completes the case when $\mu^* = 0$.

\myparagraph{\underline{Case of $\mu^* > 0$}}
When $\mu > 0$, the closed-form solution to the CDF of $\ell_0$ becomes more complicated, as we can no longer use symmetry around the origin. 
Picking up at \eqref{eq:tot_prob_form}, we first note that when $y \sim \cN(\mu_0, 1)$, we have
\begin{align*}
    \mathbb{P}_{\mu_0} \left(y < 0 \right) = \Phi(- \mu_0)
    \quad
    \text{and}
    \quad
    \mathbb{P}_{\mu_0} \left(y \geq 0 \right) = \Phi(\mu_0).
\end{align*}
Next, we must find the conditional probabilities. 
Starting with the case when $\{y < 0 \}$, we obtain
\begin{align}
    &\mathbb{P}_{\mu_0} \left((y - \mu_0)^2 - y^2 \leq c \mid y < 0 \right) \\ 
    &= \mathbb{P}_{\mu_0} \left(-2 y \mu_0 + \mu_0^2 \leq c \mid y < 0 \right) \nonumber \\
    &= \mathbb{P}_{\mu_0} \left(y \geq \frac{\mu_0^2 - c}{2 \mu_0} \mid y < 0 \right) \nonumber \\
    &= \Phi(- \mu_0)^{-1} \mathbb{P}_{\mu_0} \left(y \geq \frac{\mu_0^2 - c}{2 \mu_0}, y < 0 \right) \nonumber \\
    &= \Phi(- \mu_0)^{-1} \left\{ 0 \cdot \bm{1} \{c \leq \mu_0^2 \} + \mathbb{P}_{\mu_0} \left( \frac{\mu_0^2 - c}{2 \mu_0} \leq y \leq 0 \right) \bm{1} \{c > \mu_0^2 \} \right\} \nonumber \\
    &= \Phi(- \mu_0)^{-1} \mathbb{P}_{\mu_0} \left( \frac{-\mu_0^2 - c}{2 \mu_0} \leq y - \mu_0 \leq - \mu_0 \right) \bm{1} \{ c > \mu_0^2 \} \nonumber \\
    &= \Phi(- \mu_0)^{-1} \left\{\Phi(- \mu_0) - \Phi\left(\frac{-\mu_0^2 - c}{2 \mu_0} \right) \right\}\bm{1} \{ c > \mu_0^2 \}. \label{eq:cond_y_l_0}
\end{align}
Then, when $\{y \geq 0 \}$, we have
\begin{align}
    & \mathbb{P}_{\mu_0} \left((y - \mu_0)^2 \leq c \mid y \geq 0 \right) \\ 
    &= \mathbb{P}_{\mu_0} \left(- \sqrt{c} \leq y - \mu_0 \leq \sqrt{c} \mid y \geq 0 \right) \nonumber \\
    &= \Phi(\mu_0)^{-1} \mathbb{P}_{\mu_0} \left(- \sqrt{c} \leq y - \mu_0 \leq \sqrt{c}, y \geq 0 \right) \nonumber \\
    &= \Phi(\mu_0)^{-1} \mathbb{P}_{\mu_0} \left(0 \leq y \leq \sqrt{c} + \mu_0 \right) \bm{1} \{ - \sqrt{c} + \mu_0 \leq 0 \} \nonumber \\
    &\quad + \Phi(\mu_0)^{-1} \mathbb{P}_{\mu_0} \left(-\sqrt{c} + \mu_0 \leq y \leq \sqrt{c} + \mu_0 \right) \bm{1} \{ - \sqrt{c} + \mu_0 > 0 \} \nonumber \\
    &= \Phi(\mu_0)^{-1} \left\{ \left( \Phi(\sqrt{c}) - \Phi(- \mu_0) \right) \bm{1} \{ c \geq \mu_0^2 \} + (2 \Phi(\sqrt{c}) - 1) \bm{1} \{ c < \mu_0^2 \} \right\}. \label{eq:cond_y_geq_0}
\end{align}
Putting together \eqref{eq:cond_y_l_0} and \eqref{eq:cond_y_geq_0}, we obtain the following CDF:
\begin{equation} 
    \mathbb{P}_{\mu_0} \left( \ell_0 \leq c \right) = \chi^2_1(c) \cdot \bm{1} \{ c < \mu_0^2 \} + \left\{ \Phi(\sqrt{c}) - \Phi\left( \frac{-\mu_0^2 - c}{2 \mu_0} \right) \right\} \cdot \bm{1} \{ c \geq \mu_0^2 \}.
\end{equation}
This completes the case of $\mu^* > 0$.

\subsection{Proof of \Cref{lemma:unconstrchi2}}\label{proof:unconstrchi2}

We derive this result using a duality argument inspired by \cite{gourieroux}.
By definition, we have
\begin{equation} \label{eq:log_like_def}
    \lambda(\mu^*, \by) = \min_{\bx: \varphi(\bx) = \mu^*} \lVert \by - \bK \bx \rVert_2^2 - \min_{\bx} \lVert \by - \bK \bx \rVert_2^2.
\end{equation}
For ease of notation, let $\hat{\bx}^* = \underset{\bx \colon \bh^\tp\bx = \mu^*}{\text{argmin}} \; \lVert \by - \bK \bx \rVert_2^2$. 
Consider the Lagrangian for the first optimization in \eqref{eq:log_like_def}:
\begin{equation} \label{eq:lagrangian_unconstr}
    L(\bx, \lambda) = \lVert \by - \bK \bx \rVert^2_2 + \lambda (\bh^\tp \bx - \mu^*).
\end{equation}
First-order optimality allows solving for $\hat{\bx}^*$ as a function of the dual variable $\lambda$:
\begin{align*}
    \nabla_{\bx} L(\bx, \lambda) &= -2 \bK^\tp (\by - \bK \bx) + \lambda \bh = 0 \\
    &\implies -2 \bK^\tp \by + 2 \bK^\tp \bK \bx + \lambda \bh = 0 \\
    &\implies \hat{\bx}^* = (\bK^\tp \bK)^{-1} \bK^\tp \by - \frac{1}{2} \lambda (\bK^\tp \bK)^{-1} \bh \\
    &\implies \hat{\bx}^* = \hat{\bx} - \frac{1}{2} \lambda (\bK^\tp \bK)^{-1} \bh.
\end{align*}
Substituting back into the LLR, we obtain
\begin{align}
    \lambda(\mu^*, \by) &= \lVert \by - \bK \hat{\bx}^* \rVert_2^2 - \lVert \by - \bK \hat{\bx} \rVert_2^2 \nonumber \\
    &= \lVert \by - \bK \hat{\bx} + \frac{1}{2} \lambda \bK(\bK^\tp \bK)^{-1} \bh \rVert^2_2 - \lVert \by - \bK \hat{\bx} \rVert_2^2. \label{eq:reduction_log_lik}
\end{align}
Performing some algebra, we note that
\begin{align*}
    \lVert \by - \bK \hat{\bx} + \frac{1}{2} \lambda \bK(\bK^\tp \bK)^{-1} \bh \rVert^2_2 &= \lVert \by - \bK \hat{\bx} \rVert_2^2 \\
    & \quad + \lambda (\by - \bK \hat{\bx})^\tp \bK (\bK^\tp \bK)^{-1} \bh + \frac{1}{4} \lambda^2 \bh^\tp (\bK^\tp \bK)^{-1} \bh.
\end{align*}
Thus, we have
\begin{align*}
    \lambda ( \by - \bK \hat{\bx} )^\tp \bK (\bK^\tp \bK)^{-1} \bh &= \lambda \by^\tp \bK (\bK^\tp \bK)^{-1} \bh - \lambda \hat{\bx}^\tp \bK^\tp \bK (\bK^\tp \bK)^{-1} \bh \\
    &= \lambda \hat{\bx}^\tp \bh - \lambda \hat{\bx}^\tp \bh \\
    &= 0.
\end{align*}
So the substitution in \eqref{eq:reduction_log_lik} can be further simplified such that:
\begin{equation} \label{eq:simplified_log_lik_unconstr}
    \lambda(\mu^*, \by) = \frac{1}{4} \lambda^2 \bh^\tp (\bK^\tp \bK)^{-1} \bh.
\end{equation}
We now turn our attention to finding $\lambda$. Note that this optimization defining the Lagrangian~\eqref{eq:lagrangian_unconstr} is convex with an affine equality constraint. 
Therefore, strong duality holds. We then define the dual function as follows:
\begin{align}
    g(\lambda) &= \min_{\bx} L(\bx, \lambda) = L(\hat{\bx}^*, \lambda) \nonumber \\
    &= \lVert \by - \bK \hat{\bx}^* \rVert^2_2 + \lambda (\bh^\tp \hat{\bx}^* - \mu^*) \nonumber \\
    &= \lVert \by - \bK \hat{\bx} + \frac{1}{2} \lambda \bK(\bK^\tp \bK)^{-1} \bh \rVert^2_2 + \lambda \left(\bh^\tp \hat{\bx} - \frac{1}{2} \lambda \bh^\tp (\bK^\tp \bK)^{-1} \bh - \mu^* \right). 
\end{align}
We note that we can make many of the same simplifications above to arrive at the simplified dual function:
\begin{equation}
    g(\lambda) = \lVert \by - \bK \hat{\bx} \rVert_2^2 + \lambda \bh^\tp - \frac{1}{4} \lambda^2 \bh^\tp (\bK^\tp \bK)^{-1} \bh + \lambda \bh^\tp \hat{\bx} - \lambda \mu^*.
\end{equation}
To maximize $g(\lambda)$, we again use the following first order optimality condition:
\begin{align}
    \frac{d g}{d \lambda} &= - \frac{1}{2} \lambda \bh^\tp (\bK^\tp \bK)^{-1} \bh + \bh^\tp \hat{\bx} - \mu^* = 0 \nonumber \\
    \implies \hat{\lambda} &= \frac{2 \left( \bh^\tp \hat{\bx} - \mu^* \right)}{\bh^\tp (\bK^\tp \bK)^{-1} \bh}. \label{eq:lambda_hat_unconstr}
\end{align}
Substituting \eqref{eq:lambda_hat_unconstr} back into \eqref{eq:simplified_log_lik_unconstr}, we obtain
\begin{align*}
    \lambda(\mu^*, \by) 
    &= \frac{1}{4} \left(\frac{2 \left( \bh^\tp \hat{\bx} - \mu^* \right)}{\bh^\tp (\bK^\tp \bK)^{-1} \bh} \right)^2 \bh^\tp (\bK^\tp \bK)^{-1} \bh \\
    &= \frac{(\bh^\tp \hat{\bx} - \mu^*)^2}{\bh^\tp (\bK^\tp \bK)^{-1} \bh}.
\end{align*}
 
For the second part, observe that when $\by \sim \mathcal{N}(\bK\bx^*, \bI_m)$, we have
\[
    \bh^\tp (\bK^\tp \bK)^{-1} \bK^\tp \by \sim \mathcal{N}(\bh^\tp\bx^*, \bh^\tp 
    (\bK^\tp \bK)^{-1} \bh),
\] hence \eqref{eq:test_stat_eq_unconstr} is the square of a one-dimensional standard Gaussian distribution.
This finishes the proof.

\section{Proofs in \Cref{sec:interval_methodology}}
\label{app:sec:interval_methodology_proofs}

\subsection{Proof of \Cref{lemma:SD}}
\label{sec:proof-subsec:stochastic_dominance}

Let $Y := \lambda (\by, \mu^*)$.
Recall the validity of $q_\alpha$ can be written as $\mathbb{P}(Y\leq q_\alpha) \geq 1-\alpha$ from \eqref{eq:type1_err_c_alpha} as:
\begin{align*}
X \succeq Y &\iff \mathbb{P}(X\geq \gamma) \geq \mathbb{P}(Y \geq \gamma), \; \; \text{for all } \gamma 
\\ 
&\iff \alpha = \mathbb{P}(X\geq Q_X(1-\alpha)) \geq \mathbb{P}(Y\geq Q_X(1-\alpha)), \; \; \text{for all } \alpha 
\\
&\iff 1-\alpha = \mathbb{P}(X\leq Q_X(1-\alpha))\leq \mathbb{P}(Y \leq Q_X(1-\alpha)), \; \; \text{for all } \alpha 
\\
&\iff Q_X(1-\alpha) \text{ is a valid } q_\alpha \text{for all } \alpha.
\end{align*}
This finishes the proof.

\subsection[Joint formulation of change-constrained optimization problem]{Joint formulation of change-constrained optimization problem \eqref{eq:cco_problem}}
\label{sec:joint-optimatization}

In this section, we provide details on formulating the optimization problem \eqref{eq:cco_problem} and the interval optimization problem \eqref{intervaldef} as a single chance-constrained optimization problem.
By joining \eqref{eq:cco_problem} and \eqref{intervaldef} as a single optimization problem, we can use a similar argument as in the proof of \Cref{thm:interval_coverage}.
Starting with problem \eqref{opt2}:
\begin{equation}\label{opt2_again}
\begin{aligned}
\inf_\mu/\sup_{\mu} \quad & \mu \\
\st \quad & \mu \in \mathbb{R} \\ & \inf_{\varphi(\bx) = \mu, \bx \in \mathcal X} -2\ell_{\bx}(\by) - \inf_{\bx \in \mathcal X} -2\ell_{\bx}(\by) \leq q_\alpha(\mu),
\end{aligned}
\end{equation}
and substituting $q_\alpha(\mu) = \sup_{\bx \in \Phi_\mu \cap \mathcal{X}}Q_{F_\bx}(1-\alpha) = -\inf_{\bx \in \Phi_\mu \cap \mathcal{X}} -Q_{F_\bx}(1-\alpha)$, we obtain:
\begin{equation}\label{opt2bisagain}
\begin{aligned}
\inf_\mu/\sup_{\mu} \quad & \mu \\
\st \quad & \mu \in \mathbb{R} \\ & \inf_{\begin{subarray}{l}
\varphi(\bx_1) = \mu, \bx_1 \in \mathcal X, \\
\varphi(\bx_2) = \mu, \bx_2 \in \mathcal X
\end{subarray}} \left [-2\ell_{\bx_1}(\by) -Q_{F_{\bx_2}}(1-\alpha) \right ]
\leq \inf_{\bx \in \mathcal X} -2\ell_{\bx}(\by),
\end{aligned}
\end{equation}
which can be transformed to parameter space as:
\begin{equation}\label{paramspaceagain}
\begin{aligned}
\inf_{\bx_1, \bx_2}/\sup_{\bx_1, \bx_2} \quad & \varphi(\bx_1) \\
\st \quad & \bx_1, \bx_2 \in \mathcal X\\
& \varphi(\bx_1) = \varphi(\bx_2) \\ 
& -2\ell_{\bx_1}(\by) - Q_{F_{\bx_2}}(1-\alpha) \leq \inf_{\bx \in \mathcal X} -2\ell_{\bx}(\by).
\end{aligned}
\end{equation}
Further unpacking $Q_{F_{\bx_2}}(1-\alpha)$ as in \Cref{lemma:cco}, we obtain the chance-constrained optimization problem:
\begin{equation}\label{eq:ccojoint}
\begin{aligned}
\inf_{\bx_1, \bx_2, q}/\sup_{\bx_1, \bx_2, q} \quad & \varphi(\bx_1) \\
\st \quad & \bx_1, \bx_2 \in \mathcal X\\
& \varphi(\bx_1) = \varphi(\bx_2) \\ 
& -2\ell_{\bx_1}(\by) \leq \inf_{\bx \in \mathcal X} -2\ell_{\bx}(\by) + q \\ 
& \mathbb{P}_{u \sim \mathcal{U}([0,1])}( \mathcal{F}(\bx_2, u) \leq q) \leq 1-\alpha,
\end{aligned}
\end{equation}
where $\mathcal{F}(\bx, u) = F^{-1}_\bx(u)$.

\subsection{Proof of \Cref{lemma:1d_cdf}}  \label{app:constrained_1d_gaussian}

Similar to \Cref{lemma:1d_cdf_part1} in \Cref{app:proof-lemma:1d_cdf_part1}, this proof is divided into two cases.

\myparagraph{\underline{Case of $\mu^*$ = 0}}
Since the case when $\mu^* = 0$ is of particular interest, we show the result in this specific case and then generalize. 
Thus, when $\mu_0 = 0$, the log-likelihood ratio has the following distribution:
\begin{equation}
    \ell_0 \sim \frac{1}{2} + \frac{1}{2} \chi^2_1.
\end{equation}
Additionally, this distribution implies the following stochastic dominance:
\begin{equation} \label{eq:llr_cdf_mu_0}
    \mathbb{P}_{\mu_0} \left(\ell_0 \leq c \right) = \frac{1}{2} \left( 1 + \chi^2_1(c) \right) \geq \chi^2_1(c),
\end{equation}
that is, the log-likelihood ratio CDF is stochastically dominated by the chi-squared with one degree of freedom distribution. This means that the type-I error of the test can be controlled at the $\alpha$ level.

When $\mu > 0$, the closed-form solution to the CDF of $\ell_0$ becomes more complicated, as we can no longer use symmetry around the origin. 
From the result of \Cref{lemma:1d_cdf_part1},
we have the following CDF:
\begin{equation} 
    \label{eq:llr_cdf_non_zero_1}
    \mathbb{P}_{\mu_0} \left( \ell_0 \leq c \right) = \chi^2_1(c) \cdot \bm{1} \{ c < \mu_0^2 \} + \left\{ \Phi(\sqrt{c}) - \Phi\left( \frac{-\mu_0^2 - c}{2 \mu_0} \right) \right\} \cdot \bm{1} \{ c \geq \mu_0^2 \}.
\end{equation}
Note, a quick check of \eqref{eq:llr_cdf_non_zero_1} when $\mu_0 = 0$ reveals agreement with \eqref{eq:llr_cdf_mu_0} such that
\begin{equation}
    \mathbb{P}_{\mu_0} \left(\ell_0 \leq c \right) = \Phi(\sqrt{c}) = \Phi(\sqrt{c}) - \frac{1}{2} + \frac{1}{2} = \frac{1}{2}\left(2 \Phi(\sqrt{c}) - 1 \right) + \frac{1}{2} = \frac{1}{2} \chi^2_1(c) + \frac{1}{2}.
\end{equation}
This completes the case of $\mu^* = 0$.

\myparagraph{\underline{Case of $\mu^* > 0$}}
We already demonstrated above the chi-squared with one degree of freedom dominates the log-likelihood ratio when $\mu_0 = 0$. 
We now show that the dominance holds when $\mu_0 > 0$. Clearly, when $c < \mu_0^2$, $\mathbb{P}_{\mu_0} \left(\ell_0 \leq 0 \right) = \chi^2_1(c)$, making it in fact equal to the chi-squared with one degree of freedom. Suppose $c \geq \mu_0^2$. 
Define 
\[h(c) := \Phi(\sqrt{c}) - \Phi\left( \frac{-\mu_0^2 - c}{2 \mu_0} \right) - \chi^2_1(c).
\]
The stochastic dominance occurs if and only if $h(c) \geq 0$ for all $c \geq \mu_0^2$.

Note first that $\chi^2_1(c) = \Phi(\sqrt{c}) - \Phi(-\sqrt{c})$ and therefore $h(c) = \Phi(-\sqrt{c}) - \Phi\left( \frac{-\mu_0^2 - c}{2 \mu_0} \right)$. Since $\Phi(\cdot)$ is a monotonically increasing function, it is sufficient to show that $-\sqrt{c} - \frac{-\mu_0^2 - c}{2 \mu_0} \geq 0$ for all $c \geq \mu_0^2$. We do so below.

Define a function $f$ as follows:
\[
    f(c) = -\sqrt{c} - \frac{-\mu_0^2 - c}{2 \mu_0}.
\]
Observe that when $c = \mu_0^2$, $f(c) = 0$. 
Consider when $c > \mu_0^2$. 
We obtain the following first and second derivatives:
\[
    f'(c) = \frac{-\mu_0 + \sqrt{c}}{2 \mu_0 \sqrt{c}}
    \quad
    \text{and}
    \quad
    f''(c) = \frac{1}{4}c^{-3/2}. 
\]
By the constraint $c > \mu_0^2$, it follows that $-\mu_0 + \sqrt{c} > 0$, and therefore, $f'(c) > 0$ for all $c > \mu_0^2$. 
Additionally, $f''(c) > 0$ for all $c > \mu_0^2$, so $f$ is convex. 
Hence, we conclude that $f$ is a monotonically increasing function for $c > \mu_0^2$, which starts at $0$ when $c = \mu_0^2$, and thus $f(c) \geq 0$ for all $c \geq \mu_0^2$. 
It therefore follows that 
\[
   \Phi(-\sqrt{c}) \geq \Phi\left(\frac{-\mu_0^2 - c}{2 \mu_0}\right),
\]
and hence $h(c) \geq 0$ for all $c \geq \mu_0^2$. 
As such, we conclude that $\mathbb{P}_{\mu_0}\left(\ell_0 \leq c \right) \geq \chi^2_1(c)$ for all $c \geq 0$.
In other words, that the sampling distribution for the log-likelihood ratio is stochastically dominated by a chi-squared distribution with one degree of freedom. 
This completes the case of $\mu^* > 0$.

\section{Proofs in \Cref{sec:rust_burrus}}
\label{app:sec:rust_burrus_proofs}

\subsection{Proof of \Cref{thm:burrus_false}}

The proof follows by combining \Cref{lemma:burruschi2,lemma:burruschi2B}.

\subsection{Proof of \Cref{lemma:burruschi2}}
\label{sec:proof:lemma:burruschi2}

The proof follows by direct inspection and substitution of $q_\alpha$ and $-2\ell_{\bx}(\by) = \lVert \by - \bK \bx \rVert_2^2$. The interval has the coverage if the $q_\alpha$ is valid by \Cref{thm:interval_coverage} and only if by \Cref{thm:interval_coverage_converse}.

\subsection{Proof of \Cref{lemma:burruschi2B}}
\label{sec:proof:lemma:burruschi2B}

The proof follows by observing $z^2_{\alpha/2} = Q_{\chi^2_1}(1-\alpha)$ and applying \Cref{lemma:SD}

\subsection{Proof of \Cref{lemma:invalidity-counterexample}}
\label{proof:invalidity-counterexample}

We argue by coupling. Note that $\frac 1 2 (y_1-y_2)^2 \sim \chi^2_1$, so that it suffices to show $\lambda \leq \frac 1 2 (y_1-y_2)^2$ for every $y$ to constitute a valid coupling that proves stochastic dominance. 
This is clearly true when $y_1 + y_2 \geq 0$, since $\lambda- \frac 1 2 (y_1-y_2)^2$ is equal to non-positive terms only. 
When $y_1+y_2 < 0$, if both are strictly negative, then $\lambda = 0 \leq \frac 1 2 (y_1-y_2)^2$. 
Then assume without loss of generality that $y_1$ is non-negative, then $y_2$ has to be negative. 
Then $\lambda = y_1^2$, but $y_1 \geq 0$, $y_2 < 0$ and $y_1 < - y_2$ imply that $|y_1 - y_2| = y_1 - y_2 \geq 2y_1 \geq \sqrt{2}y_1$, squaring both sides gives $\frac{1}{2}(y_1-y_2)^2 < y_1^2 = \lambda$. 
This finishes the proof.

\subsection{Proof of \Cref{lem:3d-counterexample}}\label{ProofCounter}

Consider the LLR
\begin{equation}\label{ource_appendix}
\lambda(\mu^* = -1, \by) ~= \min_{\substack{x_1 + x_2 -x_3 = -1\\ \bx \geq \bm{0}}} \Vert \bx -\by \Vert^2_2 - \min_{\bx \geq \bm{0} } \Vert \bx-\by \Vert^2_2.
\end{equation}
The goal of this proof is to show that $\chi^2_1$ does not stochastically dominate 
\eqref{ource_appendix} when $\by \sim \mathcal{N}(x^* = (0,0,1), \bI_3)$. By Corollary 4.26 in \cite{ProbabilityBook}, $X \succeq Y$ implies $\mathbb E[x] > \mathbb E[y]$, so it suffices to show that 
\[
    \mathbb E[\lambda(\mu^* = -1, \by)] > \mathbb E[\chi^2_1] = 1
\]
to complete the proof.

Observe that
\begin{equation}
    \mathbb E [\lambda(\mu^* = -1, \by)] = \mathbb E \bigg [\min_{\substack{\bh^\tp \bx = -1 \\ \bx \geq \bm{0}}} \Vert \bx -\by \Vert^2_2\bigg ] - \mathbb{E} \bigg [\min_{\bx \geq \bm{0} } \Vert \bx -\by \Vert^2_2\bigg ].
\end{equation}
We begin by computing the second term. 
Since 
\[
\min_{\bx \geq \bm{0} } \Vert \bx -\by \Vert^2_2 = \sum_{i=1}^3 (y_i - \max\{y_i, 0\})^2,
\]
we have
\begin{align*}
     \mathbb{E} \bigg [\min_{\bx \geq \bm{0} } \Vert \bx -\by \Vert^2_2\bigg ] &=  \sum_{i=1}^3  \mathbb{E} \bigg [ (y_i - \max\{y_i, 0\})^2\bigg ] \\
     &= 2 \mathbb{E}_{z \sim \mathcal N(0,1)} \bigg[ (z-\max\{z,0\})^2 \bigg ] + \mathbb{E}_{z \sim \mathcal N(1,1)} \bigg[ (z-\max\{z,0\})^2 \bigg ]
\end{align*}
Let $g(z) := (z-\max\{z, 0\})^2$. Using in both cases, we obtain 
\begin{align*}
\mathbb E[g(z)] 
= {\underbrace{\mathbb E[g(z)\mid z\geq 0]}_{0}} \cdot {\mathbb P(z \geq 0)} + \mathbb E[g(z)\mid z < 0] \cdot \mathbb P(z < 0) = \mathbb E[z^2 \mid z < 0] \cdot \mathbb P(z < 0).
\end{align*}
Note that
\begin{align*}
\mathbb E[z^2 \mid z < 0] 
&= (\mathbb E[z\mid z < 0])^2 + \mathrm{Var}[z\mid z<0] \\ 
&= \begin{dcases}
      \bigg(-\frac{\phi(0)}{\Phi(0)}\bigg)^2 + \bigg(1 - \Big(\frac{\phi(0)}{\Phi(0)}\Big)^2\bigg),\; z \sim \mathcal N(0,1)\\ 
    \bigg(1- \frac{\phi(-1)}{\Phi(-1)}\bigg)^2 + 1 + \frac{\phi(-1)}{\Phi(-1)} - \Big(\frac{\phi(-1)}{\Phi(-1)}\Big)^2,\; z \sim \mathcal N(1,1)
\end{dcases} \\ 
&= \begin{dcases}
      1, \; z \sim \mathcal N(0,1)\\ 
    2- \frac{\phi(-1)}{\Phi(-1)}, \; z \sim \mathcal N(1,1),
\end{dcases} 
\end{align*}
where we used the formulas for mean and variance of a truncated Gaussian. 
Finally,
\begin{align*}
\mathbb{E} \bigg [\min_{\bx \geq \bm{0} } \Vert \bx -\by \Vert^2_2\bigg ]  
&= 2 \cdot 1/2 \cdot 1 + (2- \phi(-1)/\Phi(-1))\cdot(\Phi(-1)) \\ 
&= 1 + 2\Phi(-1) - \phi(-1) \\ 
&\approx 1.0753.
\end{align*}
It suffices to prove that 
\[
\mathbb E \bigg [\min_{\substack{\bh^\tp \bx = -1 \\ \bx \geq \bm{0}}} \Vert \bx -\by \Vert^2_2 \bigg] > 2 + 2\Phi(-1) - \phi(-1) \approx 2.0753. 
\]
We will prove that 
\[
\mathbb E \bigg [\min_{\substack{\bh^\tp \bx = -1 \\ \bx \geq \bm{0}}}\Vert \bx -\by \Vert^2_2 \bigg]  = 13/6 \approx 2.166.
\]

Note that the intersection of the plane $\bh^\tp \bx = x_1 +x_2 - x_3 = -1$ and $\bx \geq \bm{0}$ is the parametric surface $\mathcal S = \left \{(u,\ v,\ u+v+1), u\geq 0, v \geq 0 \right\}$, so we can write
\begin{equation}
    \min_{\bx \in \mathcal S} \Vert \bx -\by \Vert^2_2 = \min_{u \geq 0, v \geq 0} (y_1-u)^2 + (y_2-v)^2 + (y_3 - u - v - 1)^2.
\end{equation}
It is convenient to define a new variable $z_3  = 1-y_3 \sim \cN(0,1)$, so that $(y_1, y_2, z_3)$ is sampled from a standard three dimensional Gaussian. Abusing notation we will still write $y_3$ for $z_3$ and then $\by \sim \cN((0,0,0), \bI)$. The optimization problem becomes:
\begin{equation}
\min_{u \geq 0, v \geq 0} (y_1-u)^2 + (y_2-v)^2 + (-y_3 - u - v)^2.
\end{equation}
This can be explicitly solved to yield: 
\begin{align}\label{formula}
    \min_{\substack{\bh^\tp \bx = -1 \\ \bx \geq \bm{0}}} \Vert \bx -\by \Vert^2_2\ = \begin{dcases}
 y_1^2+y_2^2+y_3^2 & y_1-y_3\leq 0 \text{ and } y_2-y_3\leq 0 \\
 \frac{1}{2} \left(y_1^2+2 y_1 y_3+2 y_2^2+y_3^2\right) & y_1-y_3\geq 0 \text{ and } y_1-2 y_2+y_3\geq 0 \\
 \frac{1}{2} \left(2 y_1^2+y_2^2+2 y_2 y_3+y_3^2\right) & y_2-y_3\geq 0 \text{ and }2 y_1-y_2-y_3\leq 0 \\
 \frac{1}{3} \left(y_1+y_2+y_3\right)^2 & \begin{cases}
     2y_1 - y_3 \geq y_2 \geq \max\{y_1, y_3\} \\
     2y_2 - y_3 \geq y_1 \geq \max\{y_2, y_3\} 
 \end{cases}.
\end{dcases}
\end{align}
We split 
\[
\int_{\mathbb R^3} \min_{\substack{\bh^\tp \bx = -1 \\ \bx \geq \bm{0}}}\Vert \bx -\by \Vert^2_2\ \phi(y_1)\phi(y_2)\phi(y_3) \, \mathrm{d}y
\]
into the different domains given by \eqref{formula}, with the value of the expectation being equal to the sum of the different integrals, which we proceed to compute.
\paragraph{\underline{Region 1}:}
\begin{align*}
    I_1 = \int_{y_3 \geq y_1, y_3 \geq y_2}  \frac{e^{-\frac{1}{2}(y_1^2 + y_2^2 +y_3^2)}}{2 \sqrt{2} \pi ^{3/2}} (y_1^2 + y_2^2 + y_3^2) \, \mathrm{d}y.
\end{align*}
Note that by symmetry of the variables in the integrand,
we have
\begin{align*}
    I_1 
    &= \int_{y_2 \geq y_1, y_2 \geq y_3}  \frac{e^{-\frac{1}{2}(y_1^2 + y_2^2 +y_3^2)}}{2 \sqrt{2} \pi ^{3/2}} (y_1^2 + y_2^2 + y_3^2) \, \mathrm{d}y \\
    &= \int_{y_1 \geq y_3, y_1 \geq y_2}  \frac{e^{-\frac{1}{2}(y_1^2 + y_2^2 +y_3^2)}}{2 \sqrt{2} \pi ^{3/2}} (y_1^2 + y_2^2 + y_3^2) \, \mathrm{d}y.
\end{align*}
And since one of the $y_i$ will always be the largest one, the sum of the domains is $\mathbb R^3$ (modulo measure zero intersections that do not affect integration) and we can write 
\begin{align*}
        I_1 = \dfrac 1  3 \int_{\mathbb R^3}  \frac{e^{-\frac{1}{2}(y_1^2 + y_2^2 +y_3^2)}}{2 \sqrt{2} \pi ^{3/2}} (y_1^2 + y_2^2 + y_3^2) \, \mathrm{d}y = \dfrac 1 3 \cdot 3 = 1.
\end{align*}
Here we used that the integral is the expected value of $y_1^2 + y_2^2 + y_3^2$, which is 3
 since the $y_i$ are centered with unit variance.
 
\paragraph{\underline{Region 2}:}
\begin{align}\label{eqi2}
    I_2 = \int_{y_1-y_3\geq 0, y_1-2 y_2+y_3\geq 0 }  \frac{e^{-\frac{1}{2}(y_1^2 + y_2^2 +y_3^2)}}{4 \sqrt{2} \pi ^{3/2}}  \left(y_1^2+2 y_1 y_3+2 y_2^2+y_3^2\right) \, \mathrm{d}y.
\end{align}

Partition $\mathbb R^3$ in four spaces with measure zero intersection, and we aim to argue that the integral of the integrand in \eqref{eqi2} has the same value when integrating over any of them: 
\begin{align*}
A &:= \left\{\by: y_1 \geq y_3, y_2 \geq \frac{y_1 + y_3}{2}\right\}\\
B &:= \left\{\by: y_1 \geq y_3, y_2 \leq \frac{y_1 + y_3}{2}\right\}\\
C &:= \left\{\by: y_1 \leq y_3, y_2 \geq \frac{y_1 + y_3}{2}\right\}\\
D &:= \left\{\by: y_1 \leq y_3, y_2 \leq \frac{y_1 + y_3}{2}\right\}.
\end{align*}
Clearly $I_2 = I_B = \int_A \bh(y_1, y_2, y_3) \, \mathrm{d}y$. Since $\bh$ satisfies $\bh(x_1, x_2, x_3) = \bh(x_3, x_2, x_1)$, we can exchange $y_1$ and $y_3$ in the definitions of the sets, so $I_A = I_C$ and $I_B = I_D$. And since $\bh$ is even with respect to $x_2$ and odd with respect to $x_1, x_3$ we can exchange $y_i$ to $-y_i$ for $i = 1,2,3$ without the result changing. This flips both inequalities, proving $I_A = I_D$ and $I_B = I_C$. We therefore have
\begin{align*}
    I_2 = \dfrac 1 4 \int_{\mathbb R^3}  \frac{e^{-\frac{1}{2}(y_1^2 + y_2^2 +y_3^2)}}{4 \sqrt{2} \pi ^{3/2}}  \left(y_1^2+2 y_1 y_3+2 y_2^2+y_3^2\right) \, \mathrm{d}y = \dfrac{1}{4} \cdot 2 = \dfrac 1 2.
\end{align*}
Here, in the integral, we factor out the sum and using that, the expected value of $y_iy_j$ is $\delta_{ij}$.
\paragraph{\underline{Region 3}:}
\begin{align*}
    I_3 = \int_{y_2-y_3\geq 0, y_2-2 y_1+y_3\geq 0 }  \frac{e^{-\frac{1}{2}(y_1^2 + y_2^2 +y_3^2)}}{4 \sqrt{2} \pi ^{3/2}}  \left(2y_1^2+2 y_2 y_3+y_2^2+y_3^2\right) \, \mathrm{d}y.
\end{align*}
This is exactly the same integral as $I_2$ by switching $y_2$ with $y_1$, so $I_3 = \dfrac 1 2$.

\paragraph{\underline{Region 4}:}
\begin{align}
        I_4 
        &= \int\displaylimits_{ 2y_1 - y_3 \geq y_2 \geq \max(y_1, y_3)}  \frac{e^{-\frac{1}{2}(y_1^2 + y_2^2 +y_3^2)}}{6 \sqrt{2} \pi ^{3/2}} (y_1 + y_2 + y_3)^2 \, \mathrm{d}y \nonumber \\
        & \qquad + \int\displaylimits_{ 2y_2 - y_3 \geq y_1 \geq \max(y_2, y_3)}  \frac{e^{-\frac{1}{2}(y_1^2 + y_2^2 +y_3^2)}}{6 \sqrt{2} \pi ^{3/2}} (y_1 + y_2 + y_3)^2 \, \mathrm{d}y. \label{eqi4}
\end{align}
We partition $\mathbb R^3$ in 12 subspaces with measure 0 intersection and we aim to argue that the integral of the integrand in \eqref{eqi4} (considering one of the integrals only) has the same value when integrating over any of them. For $\sigma$ a permutation of $(y_1, y_2, y_3)$, we define the first 6 subsets as:
\[
\{\by \colon 2y_{\sigma(1)} - y_{\sigma(2)} \geq y_{\sigma(3)} \geq \max\{y_{\sigma(1)}, y_{\sigma(2)}\}\},
\]
and the last 6 subsets as: 
\[\{\by \colon 2y_{\sigma(1)} - y_{\sigma(2)} \leq y_{\sigma(3)} \leq \min\{y_{\sigma(1)}, y_{\sigma(2)}\}\}.
\]
We need to prove that the integral has the same value in any of the 12 subsets. 
Since that the integrand 
\[
\bh(y_1, y_2, y_3) := \frac{e^{-\frac{1}{2}(y_1^2 + y_2^2 +y_3^2)}}{6 \sqrt{2} \pi ^{3/2}} (y_1 + y_2 + y_3)^2
\]
satisfies $\bh(y_1, y_2, y_3) = \bh(y_{\sigma(1)},y_{\sigma(2)},y_{\sigma(3)})$ for all permutations $\sigma$, the value of the integral in between the first and second groups of 6 subsets is the same. 
For a fixed $\sigma$ (say, the identity), since $\bh(y_1, y_2, y_3) = \bh(-y_1, -y_2,-y_3)$, the value over 
\[\{ \by: 2y_1-y_2 \geq y_3 \geq \max\{y_1,y_2\} \}\] is the same as the value over 
\[\{ \by: -2y_1+y_2 \geq -y_3 \geq \max\{-y_1,-y_2\} \} = \{ \by: 2y_1-y_2 \leq y_3 \leq \min\{y_1,y_2\} \},
\]
so the value over the 12 sets is complete. 

It remains to be seen that for a generic $\by = (y_1, y_2, y_3)$, $y_1 \neq y_2 \neq y_3 \neq y_1$ (which can be assumed with probability 1 without affecting the integral), the point belongs to one and just one of the sets. Assume without loss of generality that $y_1$ is the greater of the three and $y_3$ is the smallest. Then since $y_1 > \max\{y_2, y_3\}$ and $y_3 < \min\{y_1, y_2\}$ the only subsets that $\by$ can belong to are: 
\begin{align*}
A &:= \{ \by: 2y_2-y_3 \geq y_1 \geq \max\{y_2, y_3\} \}\\
B &:=\{ \by: 2y_3-y_2 \geq y_1 \geq \max\{y_2, y_3\} \}\\
C &:=\{ \by: 2y_1-y_2 \leq y_3 \leq \min\{y_1, y_2\} \}\\
D &:=\{ \by: 2y_2-y_1 \leq y_3 \leq \min\{y_1, y_2\} \}.
\end{align*}
But $\by$ is not in $B$ because that would require $y_3 \geq \frac{y_1 + y_2}{2}$ but $y_3 < y_1$ and $y_3 < y_2$, and it is also not in $C$ because that would require $y_1 \leq \frac{y_2+y_3}{2}$ and $y_1 > y_2$ and $y_1 > y_3$. $\by$ will be in $A$ if $y_2 > \frac{y_1+y_3}{2}$ and in $D$ if, on the contrary, $y_2 < \frac{y_1+y_3}{2}$, both of which are possible, but not at the same time. We conclude by identifying $I_4$ as the sum of two integrals over subsets that we have defined, and therefore
\begin{align*}
        I_4 = \dfrac{2}{12} \int_{\mathbb R^3}  \frac{e^{-\frac{1}{2}(y_1^2 + y_2^2 +y_3^2)}}{6 \sqrt{2} \pi ^{3/2}} (y_1 + y_2 + y_3)^2 \, \mathrm{d}y = \dfrac 1 6 \cdot 1 = \dfrac 1 6.
\end{align*}
Here we expand the sum and use again  that the expected value of $y_iy_j$ is $\delta_{ij}$. The proof concludes by adding up
\[
    \mathbb E \bigg [\min_{\substack{\bh^\tp \bx = -1 \\ \bx \geq \bm{0}}}\Vert \bx -\by \Vert^2_2 \bigg] =I_1 + I_2 + I_3 + I_4 = \frac {13}{6}.
\]

\subsection{Proof of \Cref{lemma:dimbounding}}
\label{proof:dimbounding}
 We construct a series of counterexamples, indexed by the dimension $p$, and prove that as $p \rightarrow \infty$, the expected value of the LLR diverges. Since stochastic dominance implies inequality of expectations (when expectations are finite), we conclude that the distribution can not be stochastically dominated.
For all $p \in \mathbb{N}$, consider the example in $\mathbb{R}^p (= \mathbb{R}^m)$, $\bK = \bI_p$, $\bx^* = (0,\ldots,0, 1)$, $\bh = (1,\ldots,1,-1)$ (such that $\mu^* = -1$). Let 
\begin{equation}
    \lambda_n(\mu^* = -1, \by) = \min_{\substack{\sum_{i =1}^{p-1}x_i -x_{p} = -1\\ \bx \geq \bm{0}}} \Vert \bx -\by \Vert^2_2 - \min_{\bx \geq \bm{0} } \Vert \bx-\by \Vert^2_2.
\end{equation}
And compute
\begin{equation}    
\mathbb{E}_{\by \sim \mathcal N(\bx^*, \bI_n)}[\lambda_n(-1, \by)] = \mathbb{E}_{\by \sim \mathcal N(\bx^*, \bI_n)}\bigg [\min_{\substack{\sum_{i =1}^{p-1}x_i -x_{p} = -1\\ \bx \geq \bm{0}}} \Vert \bx -\by \Vert^2_2\bigg]  - \mathbb{E}_{\by \sim \mathcal N(\bx^*, \bI_n)}\bigg[\min_{\bx \geq \bm{0} } \Vert \bx-\by \Vert^2_2\bigg].
\end{equation}
For the second term, we have
\begin{align*}
     \mathbb{E} \bigg [\min_{\bx \geq \bm{0} } \Vert \bx -\by \Vert^2_2\bigg ] &=  \sum_{i=1}^p  \mathbb{E} \bigg [ (y_i - \max\{y_i, 0\})^2\bigg ] \\
     &= (p-1) \mathbb{E}_{z \sim \mathcal N(0,1)} \bigg[ (z-\max\{z,0\})^2 \bigg ] + \mathbb{E}_{z \sim \mathcal N(1,1)} \bigg[ (z-\max\{z,0\})^2 \bigg ] \\ 
&= (p-1)\dfrac{1}{2} + (2- \phi(-1)/\Phi(-1))\cdot(\Phi(-1)),
\end{align*}
using similar arguments as the proof in \Cref{ProofCounter}. We will lower bound the first term using duality. For simplicity, define $\bz = (y_1, \ldots, y_{p-1}, y_n-1) \sim \mathcal N(\mathbf{0}, \bI_n)$, and equivalently optimize
\begin{equation}
\min_{\substack{\sum_{i =1}^{p-1}x_i = x_{p}\\ \bx \geq \bm{0}}} \Vert \bx -\bz \Vert^2_2,
\end{equation}
where we defined the feasible $\tilde{\bx} = (x_1, \ldots, x_n-1 = \sum_{i =1}^{p-1} x_i)$ and replaced $\tilde{\bx}$ by $\bx$, abusing notation. Using Fenchel duality, we have that 
\begin{align}
     \min_{\substack{\sum_{i =1}^{p-1}x_i = x_{p}\\ \bx \geq \bm{0}}} \Vert \bx -\bz \Vert^2_2 \geq \sup_{\bm{\xi} \in \mathbb{R}^p} (- f^*(\bm{\xi}) - g^*(-\bm{\xi})), 
\end{align}
where we have noted by $f^*$ the convex conjugate of $f(\bx) := \|\bx - \bz\|_2^2$ and, letting $S$ be the feasible set, we denoted by $g^*$ the convex conjugate of 
\begin{align}
g(\bx) = \begin{cases}
    0  &\quad \text{if } \bx \in S \\ 
    \infty &\quad \text{if } \bx \notin S.
\end{cases}
\end{align}

Note that with these definitions, $\min_{\substack{\sum_{i =1}^{p-1}x_i = x_{p}\\ \bx \geq \bm{0}}} \Vert \bx -\bz \Vert^2_2 = \inf_{\bx} (f(x) + g(x))$ so the weak Fenchel duality applies. 
We compute $f^*(\bm{\xi}) = \frac{1}{4} \Vert \bm{\xi} \Vert^2_2 + \bz^\top \bm{\xi} - \bz^\top \bz$, and 
\begin{align}
    g^*(\bm{\xi}) = \begin{cases}
    0 &\quad \text{if } \xi_i+\xi_p \leq 0 \text{ for } i \in [p-1] \\ 
    \infty &\quad \text{otherwise},
\end{cases}    
\end{align}
so that 
\begin{align}
     \sup_{\bm{\xi} \in \mathbb{R}^p} (-f^*(\bm{\xi}) - g^*(-\bm{\xi})) &=  \sup_{\xi_i + \xi_n \geq 0, \text{ for all } i \in [p-1]} \bigg [-\frac{1}{4} \Vert \bm{\xi} \Vert^2_2 - \bz^\top \bm{\xi} + \bz^\top \bz \bigg] \\ &\geq \sup_{\xi_i + \xi_n \geq 0, \text{ for all } i \in [p-1]} \bigg [-\frac{1}{4} \Vert \bm{\xi} \Vert^2_2 - \bz^\top \bm{\xi} \bigg] . 
\end{align}

Since the supremum is lower bounded by any feasible point, we can further bound by picking a 
feasible $\bm{\xi}^*$ for each possible $\bz$. We define the following:
\begin{equation}
    \bm{\xi}^*(\bz) = \begin{cases}
        -\bz & \text{if }-\bz\text{ is feasible } (-z_i \geq z_n \text{ for all } i)  \\
        (-z_1, \ldots, -z_{p-1}, \max_{i \in [p-1]}{z_i}) & \text{otherwise}.
    \end{cases}
\end{equation}
Observe that
\begin{align*}
 &\min_{\substack{\sum_{i =1}^{p-1}x_i = x_{p}\\ \bx \geq \bm{0}}} \Vert \bx -\bz \Vert^2_2 \\&\qquad \geq -\frac{1}{4} \Vert \bm{\xi}^*(\bz) \Vert^2_2 - \bz^\top \bm{\xi}^*(\bz) \\&\qquad = \begin{dcases}
            \frac{3}{4} \|\bz\|^2 & \text{if }-\bz \text{ is feasible } (-z_i \geq z_n \text{ for all } i)  \\
        \frac{3}{4}\sum_{i=1}^{p-1}z_i^2 + z_n\max_{i \in [p-1]}{z_i} -\frac{1}{4}\Big(\max_{i \in [p-1]}{z_i}\Big)^2 & \text{otherwise}.
 \end{dcases}
\end{align*}
We note that $-\bz$ is feasible with probability $1/p$, by symmetry. Taking expected value over the inequality and using the law of total expectation yields
\begin{align*}
&\mathbb{E}\bigg[\min_{\substack{\sum_{i =1}^{p-1}x_i = x_{p}\\ \bx \geq \bm{0}}} \Vert \bx -\bz \Vert^2_2\bigg ] \\
&\geq \frac{1}{p} \times \frac{3}{4}\mathbb{E}\big[\|z\|_2^2\big] + \frac{p-1}{p} \bigg \{\mathbb{E}\bigg[\frac{3}{4}\sum_{i=1}^{p-1}z_i^2\bigg]+ \mathbb{E}\bigg[z_n\max_{i \in [p-1])}{z_i}\bigg] -\frac{1}{4}\mathbb{E}\bigg[\Big(\max_{i \in [p-1]}{z_i}\Big)^2\bigg] \bigg \} \\ 
&= \frac{3}{4} + \frac{p-1}{p}\bigg\{\frac{3(p-1)}{4} + 0 - \frac{1}{4}\mathbb{E}\bigg[\Big(\max_{i \in [p-1]}{z_i}\Big)^2\bigg]\bigg\}.
\end{align*}
To bound the last term, we use  
\[
    \mathbb{E}\bigg[\Big(\max_{i \in [p-1]}{z_i}\Big)^2\bigg] = \mathbb{E}\bigg[\max_{i \in [p-1]}{z_i}\bigg] + \mathrm{Var}\bigg[\max_{i \in [p-1]}{z_i}\bigg] \leq \sqrt{2\log(p-1)} + 1,
\] 
where the moment bounds are standard results: the expectation bound can be found using Jensen's inequality on $\exp({\sqrt{2\log p}\max_i{z_i}})$ and then bounding $\max_i{z_i} \leq \sum_i z_i$, and the variance bound with \Poincare's inequality applied to a smooth maximum, even though it can be refined \cite{10.1214/ECP.v17-2210}. 
Putting everything together, we obtain
\begin{align*}
\mathbb{E}_{\by \sim \mathcal N(\bx^*, \bI_n)}[\lambda_n(-1, \by)] \geq  \frac{p-1}{p}\bigg\{\frac{3(p-1)}{4} - \frac{1}{4} \sqrt{2\log(p-1)} \bigg\} - \frac{p}{2} + \mathcal{O}(1),
\end{align*}
which is $\mathcal{O}(p)$ and therefore tends to $\infty$ as $p\rightarrow\infty$.
This completes the proof.

\clearpage
\section{Additional numerical illustrations in \Cref{sec:numerical-examples}}
\label{app:sec:numerical-examples}

\subsection{Constrained Gaussian in three dimensions}

We include the analog of \Cref{fig:3d_coverage_length68} with $1-\alpha = 0.95$ in \Cref{fig:3d_coverage_length95}.

\begin{figure}[!ht]
    \centering
    \includegraphics[width = 0.9\columnwidth]{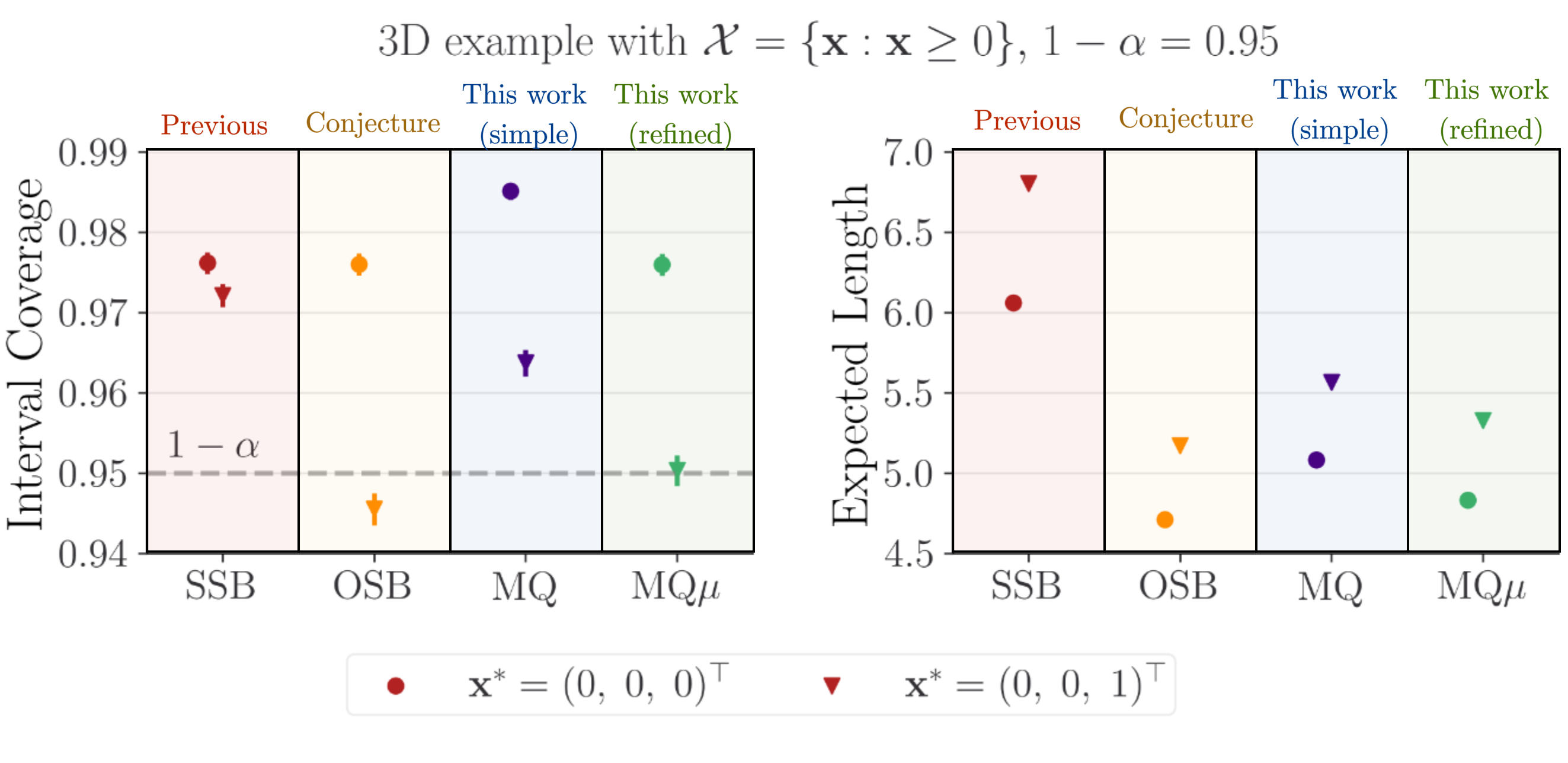}
    \caption{ Estimated interval coverage \textbf{(left)} and expected lengths \textbf{(right)} for $95\%$ intervals resulting from the SSB, OSB, MQ and MQ$
    \mu$ methods for the Gaussian linear model in \eqref{eq:lineargaussianmodel} with $\bK = \bI_3$, $\varphi(\bx) = \bh^\tp \bx = x_1 + x_2 - x_3$ and $\mathcal X = \{\bx \in \mathbb{R}^3: \bx \geq 0 \}$.
    }
    \label{fig:3d_coverage_length95}
\end{figure}

\end{document}